\newcommand{\claim}[2]{\begin{equation}\mbox{\parbox{\linewidth}{{\em #2}}}\label{#1}\end{equation}}
\newtheorem{theorem}{Theorem}[section]
\newtheorem{corollary}[theorem]{Corollary}
\newtheorem{lemma}[theorem]{Lemma}
\begin{document}
\title{Fine structure of $4$-critical triangle-free graphs I.  Planar graphs with two triangles and $3$-colorability of chains}
\author{%
     Zden\v{e}k Dvo\v{r}\'ak\thanks{Computer Science Institute (CSI) of Charles University,
           Malostransk{\'e} n{\'a}m{\v e}st{\'\i} 25, 118 00 Prague, 
           Czech Republic. E-mail: \protect\href{mailto:rakdver@iuuk.mff.cuni.cz}{\protect\nolinkurl{rakdver@iuuk.mff.cuni.cz}}.
           Supported by project 14-19503S (Graph coloring and structure) of Czech Science Foundation.}
\and	   
Bernard Lidick\'y\thanks{
Iowa State University, Ames IA, USA. E-mail:
\protect\href{mailto:lidicky@iasate.edu}{\protect\nolinkurl{lidicky@iastate.edu}}.
Supported by NSF grant DMS-1266016 and DMS-1600390.}
}
\date{\today}
\maketitle
\begin{abstract}
Aksenov proved that in a planar graph $G$ with at most one triangle, every precoloring of a $4$-cycle can be extended to
a $3$-coloring of $G$.  We give an exact characterization of planar graphs with two triangles in which some precoloring of
a $4$-cycle does not extend.  We apply this characterization to solve the precoloring extension problem from two $4$-cycles in a triangle-free
planar graph in the case that the precolored $4$-cycles are separated by many disjoint $4$-cycles.  The latter result is
used in followup papers to give detailed information about the structure of $4$-critical triangle-free graphs embedded in a fixed surface.
\end{abstract}

\section{Introduction}

The interest in the $3$-coloring properties of planar graphs was started by a celebrated theorem of Gr\"otzsch~\cite{grotzsch1959},
who proved that every planar triangle-free graph is $3$-colorable.  While in general, deciding $3$-colorablity
of a planar graph is an NP-complete problem~\cite{garey1979computers}, there are many other sufficient conditions guaranteeing
$3$-colorability, see e.g. the survey of Montassier~\cite{montasweb}.

For a long time, the question of the complexity of deciding whether a triangle-free graph embedded in a fixed surface (other than
the sphere) is $3$-colorable was open.  The question was resolved for the projective plane by the result of Gimbel and Thomassen~\cite{gimbel},
and in a far reaching generalization, Dvo\v{r}\'ak, Kr\'al' and Thomas~\cite{trfree7} proved that there exists a linear-time
algorithm for this problem for any fixed surface, even if a bounded number of vertices have prescribed colors.  In order to design their algorithm,
they show in~\cite{trfree6} that every triangle-free graph embedded in a fixed surface exhibits a special structure that determines its $3$-coloring properties.

Before we describe their structural result, let us recall some definitions.  A \emph{surface} is a two-dimensional manifold, possibly with
boundary.  By the surface classification theorem, each surface can be (up to homeomorphism) obtained from the sphere by adding a finite
number of handles and crosscaps, and in the case of surfaces with boundary, by drilling a finite number of holes.  
The \emph{disk} is the sphere with a hole and the \emph{cylinder} is the sphere with two holes.
An \emph{embedding} of a graph $G$ in a surface $\Sigma$ is a function $\alpha$ that maps vertices of $G$ to
distinct points in $\Sigma$ and edges of $G$ to simple curves in $\Sigma$ intersecting only in their endpoints,
such that for each $uv\in E(G)$, $\alpha(u)$ and $\alpha(v)$
are the endpoints of the curve $\alpha(uv)$ and no other vertices are mapped to points in this curve.  Throughout the paper,
graphs will generally be embedded in some surface; we usually keep the embedding implicit and we use terms such as vertex and edge
to refer to both the elements of the graph and the points or curves in the surface that represent them.
A \emph{face} of a graph $G$ embedded in $\Sigma$ is a connected component of the surface after removing the points and curves of
the embedding of $G$; in particular, if $\Sigma$ is a surface with boundary and a cycle $C$ in $G$ traces a component of the boundary,
then $C$ does not necessarily bound a face.  A closed walk in $G$ is \emph{contractible} if the closed curve tracing this walk in the surface
is null-homotopic.  A contractible cycle bounds a disk in $\Sigma$, unique unless $\Sigma$ is the sphere; the cycle is \emph{facial} if
the interior of such a disk is a face of $G$.

We are now ready to state the structural result of Dvo\v{r}\'ak et al.~\cite{trfree7}.
Let $G$ be a graph embedded in a surface $\Sigma$ so that every contractible $4$-cycle is facial,
and suppose $G$ intersects the boundary of $\Sigma$ in a set $X$ of $k$ vertices (which we view as precolored).
Then $\Sigma$ can be cut in to a bounded number of pieces along curves tracing closed walks in $G$, so that
\begin{itemize}
\item the total number of vertices contained in the boundaries of the pieces is bounded by a constant depending only on $\Sigma$ and $k$,
\item each piece $\Pi$ and the subgraph $H$ of $G$ drawn in $\Pi$ satisfies one of the following:
\begin{enumerate}
\item every $3$-coloring of the vertices of $H$ contained in the boundary of $\Pi$ extends to a $3$-coloring of $H$; or,
\item a $3$-coloring of the vertices of $H$ contained in the boundary of $\Pi$ extends to a $3$-coloring of $H$ if and only if it satisfies a specific condition
(``the winding number constraint''); or,
\item $\Pi$ is homeomorphic to the cylinder (whose boundary consists of two cycles in $H$ of arbitrary length) and all faces of $H$ have length $4$; or,
\item $\Pi$ is homeomorphic to the cylinder whose boundary consists of two cycles in $H$ of length $4$.
\end{enumerate}
\end{itemize}
Thus, to determine whether a precoloring of $X$ extends to a $3$-coloring $G$, we can try all the (constantly many) extensions to $3$-colorings of the boundary vertices
of the pieces, and then test whether one of them extends to all the pieces.  In the first two possibilities for the pieces of the structure we have
a complete information about which colorings of the boundaries of the pieces extend, and the last part is thus trivial.
However, in the last two cases the information is much more limited.  While this is sufficient for the purposes
of the algorithm of~\cite{trfree7}, it would be preferable to have a more detailed structural theorem
where the $3$-coloring properties of all the pieces are known.  This is the main aim of this series of papers.

In this paper we focus on the last subcase of a graph embedded in the cylinder with boundary cycles of length $4$.
Note that if such a graph contains only a bounded number of separating $4$-cycles, then we can further cut the
surface along them and by using the ideas of~\cite{trfree6}, we can subdivide the pieces to a bounded number of subpieces
satisfying the conditions of one of the first two well-understood cases of the structure theorem.  Hence, it is interesting to study
the graphs in cylinder with many separating $4$-cycles, and this is the topic of this paper.

Let us now give a few definitions enabling us to state the main result more precisely.
In this paper, we generally consider graphs embedded in the sphere, the disk, or the cylinder.
Suppose that $G$ is a graph embedded in a surface $\Sigma$ with boundary and consider a component $\Theta$
of the boundary ($\Theta$ is a simple closed curve bounding a hole in the surface).  Let $\Sigma_\Theta$ denote
the surface obtained from $\Sigma$ by closing the hole, i.e., by identifying $\Theta$ with the boundary of an open disk $\Lambda_\Theta$
disjoint from $\Sigma$.  We say that $\Theta$ is \emph{surrounded} in $G$ if the embedding of $G$ in $\Sigma_\Theta$
has a face homeomorphic to an open disk containing $\Lambda_\Theta$ and bounded by a cycle $R$.
Equivalently, the part of the surface $\Sigma$ between $R$ and $\Theta$ intersects the drawing of $G$ exactly in $R$
and all non-contractible simple curves contained in this part are homotopically equivalent to the closed curve tracing $R$.
In that case, we say that $R$ is the \emph{ring} surrounding the hole.

From now on, we always assume that if a graph $G$ is embedded in a surface with boundary,
then all the holes of the surface are surrounded.
Note that the ring may or may not trace the boundary of the hole it surrounds, and in particular
the rings surrounding different holes do not have to be disjoint (or even distinct, in case that $G$ is just a cycle).
A face $f$ of $G$ is a \emph{non-ring} face if $f$ is not contained in any of the parts of the surface between the holes and the rings that
surround them.

We construct a sequence of graphs $T_1$, $T_2$, \ldots, which we call \emph{Thomas-Walls graphs} (Thomas and Walls~\cite{tw-klein} proved that they are exactly
the $4$-critical graphs that can be drawn in the Klein bottle without contractible cycles of length at most $4$).
Let $T_1$ be equal to $K_4$.  For $n\ge 1$, let $u_1u_3$ be any edge of $T_n$ that belongs to two triangles and let $T_{n+1}$ be obtained from $T_n-u_1u_3$
by adding vertices $x$, $y$ and $z$ and edges $u_1x$, $u_3y$, $u_3z$, $xy$, $xz$, and $yz$.  The first few graphs of this sequence are drawn in Figure~\ref{fig-thomaswalls}.
\begin{figure}
\begin{center}
\includegraphics[scale=0.8]{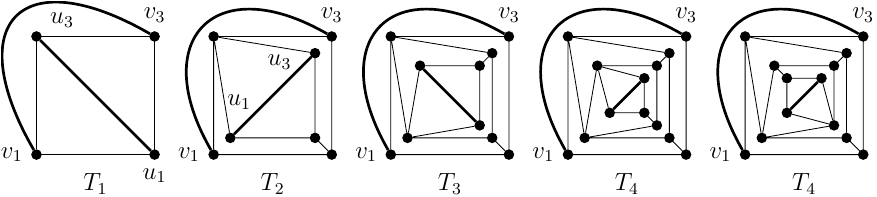}
\end{center}
\caption{Some Thomas-Walls graphs (with two different drawings of $T_4$).}\label{fig-thomaswalls}
\end{figure}
For $n\ge 2$, note that $T_n$ contains unique $4$-cycles $C_1=u_1u_2u_3u_4$ and $C_2=v_1v_2v_3v_4$ such that $u_1u_3,v_1v_3\in E(G)$.
Let $T'_n=T_n-\{u_1u_3,v_1v_3\}$.  We also define $T'_1$ to be a $4$-cycle $C_1=C_2=u_1v_1u_3v_3$.
We call the graphs $T'_1$, $T'_2$, \ldots \emph{reduced Thomas-Walls graphs}, and we say that $u_1u_3$ and $v_1v_3$ are their \emph{interface pairs}.
Note that $T'_n$ has an embedding in the cylinder with rings $C_1$ and $C_2$.  

A \emph{patch} is a graph $F$ drawn in the disk with ring $C$ of length $6$ that traces the boundary of the disk,
such that $C$ is an induced cycle in $F$, every face of $F$ has length $4$, and every $4$-cycle in $F$ is facial.
Let $G$ be a graph embedded in the sphere, possibly with holes.
Let $G'$ be any graph which can be obtained from $G$ as follows.
Let $S$ be an independent set in $G$ such that every vertex of $S$ has degree $3$.  For each vertex $v\in S$
with neighbors $x$, $y$ and $z$, remove $v$, add new vertices $a$, $b$ and $c$ and a $6$-cycle
$C=xaybzc$ (where $a$, $b$, and $c$ are drawn very close to the original location of $v$
and the edges of $C$ are drawn very close to the curves representing the edges $vx$, $vy$, and $vz$),
and draw any patch with ring $C$ in the disk bounded by $C$.
We say that any such graph $G'$ is obtained from $G$ by \emph{patching}.
This operation was introduced by Borodin et al.~\cite{4c4t} in the context of describing planar $4$-critical graphs with exactly $4$ triangles.

Consider a reduced Thomas-Walls graph $G=T'_n$ for some $n\ge 1$, with interface pairs $u_1u_3$ and $v_1v_3$.
A \emph{patched Thomas-Walls graph} is any graph obtained from such a graph $G$ by patching, and $u_1u_3$ and $v_1v_3$ are
its interface pairs (note that $u_1$, $u_3$, $v_1$, and $v_3$ have degree two in $G$, and thus they are not affected by patching).

Let $G$ be a graph embedded in the sphere with $n$ holes ($n\in\{1,2\}$), with rings $C_i=x_iy_iz_iw_i$
of length $4$, for $1\le i\le n$.  Let $y'_i$ be either a new vertex or $y_i$, and let $w'_i$ be either a new vertex
or $w_i$.  Let $G'$ be obtained from $G$ by adding $4$-cycles $x_iy'_iz_iw'_i$ forming the new rings.  We say that
$G'$ is obtained by \emph{framing on pairs $x_1z_1$, \ldots, $x_nz_n$}.

Let $G$ be a graph embedded in the cylinder with rings $C_1$ and $C_2$ of length three, such that every non-ring face of $G$ has length $4$.
We say that such a graph $G$ is a \emph{$3,3$-quadrangulation}.
Let $G'$ be obtained from $G$ by subdividing at most one edge in each of $C_1$ and $C_2$.  We say that such a graph $G'$ is a \emph{near $3,3$-quadrangulation}.

We say that a graph $G$ embedded in the cylinder is \emph{tame} if $G$ contains no
contractible triangles, and all triangles of $G$ are pairwise vertex-disjoint.
Let $G$ be a tame graph embedded in the cylinder with rings of length at most $4$.
We say that $G$ is a \emph{chain} of graphs $G_1$, \ldots, $G_n$, if there exist non-contractible $(\le\!4)$-cycles $C_0$, \ldots, $C_n$
in $G$ such that 
\begin{itemize}
\item the cycles are pairwise vertex-disjoint except that for $(i,j) \in \{(0,1),(n,n-1)\}$,  $C_i$ can intersect $C_j$ if
$C_i$ is a 4-cycle and $C_j$ is a triangle,
\item for $0\le i<j<k\le n$ the cycle $C_j$ separates $C_i$ from $C_k$,
\item the cycles $C_0$ and $C_n$ are the rings of $G$,
\item every triangle of $G$ is equal to one of $C_0$, \ldots, $C_n$, and
\item for $1\le i\le n$, the subgraph of $G$ drawn between $C_{i-1}$ and $C_i$ is isomorphic to $G_i$.
\end{itemize}
We say that $C_0$, \ldots, $C_n$ are the \emph{cutting cycles} of the chain.
The main result of this paper is the following.

\begin{theorem}\label{thm-many}
There exists an integer $c\ge 0$ such that the following holds.
Let $G$ be a tame graph embedded in the cylinder with rings $C_1$ and $C_2$ of length at most $4$.
If $G$ is a chain of at least $c$ graphs, then
\begin{itemize}
\item every precoloring of $C_1\cup C_2$ extends to a $3$-coloring of $G$, or
\item $G$ contains a subgraph $H$ obtained from a patched Thomas-Walls graph by framing on its interface pairs, with rings $C_1$ and $C_2$, or
\item $G$ contains a near $3,3$-quadrangulation $H$ with rings $C_1$ and $C_2$ as a subgraph.
\end{itemize}
\end{theorem}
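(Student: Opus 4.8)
The plan is to establish a dichotomy driven by the presence or absence of a long "critical core'' inside the chain. First I would reduce to the critical case: pass to a subgraph $G^\ast$ of $G$ that is critical with respect to the precoloring of $C_1\cup C_2$ (keeping the rings fixed). If some precoloring of $C_1\cup C_2$ does \emph{not} extend, then $G^\ast\neq C_1\cup C_2$, and the whole analysis will take place in $G^\ast$; if every precoloring extends we are already in the first outcome of the theorem. Since $G$ is a chain of at least $c$ graphs, I would argue that $G^\ast$ inherits a chain structure (possibly of fewer pieces, but still growing with $c$), using the cutting cycles $C_0,\ldots,C_n$: between consecutive cutting cycles each piece $G_i$ is a graph embedded between two $(\le\!4)$-cycles, and criticality forces each such piece to be ``tight'' in a controllable way.

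\medskip

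The heart of the argument is a local structural analysis of a single link of the chain. For a piece $\Pi$ of $G^\ast$ drawn between two consecutive short cutting cycles, I would classify the ways a precoloring of one boundary cycle constrains the precoloring of the other. The key step is to show that, up to the three allowed degrees of freedom, each critical piece acts on boundary colorings either as (a) a ``gadget'' equivalent to deleting an interface edge of a Thomas--Walls block --- i.e. it behaves like one step $T_n\to T_{n+1}$ of the Thomas--Walls recursion, possibly after patching a degree-$3$ vertex --- or (b) a quadrangulated block forcing a $3,3$-quadrangulation structure. Here I would lean on Aksenov's extension theorem for planar graphs with at most one triangle (the result quoted in the abstract) applied inside each piece after cutting along a cutting cycle, since tameness guarantees triangles are disjoint and non-contractible, so each piece contains at most a bounded number of triangles localized at its boundary. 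The classification should be finite: criticality plus the length-$4$ (or $3$) boundary forces the interior to be quadrangulated except near the triangular cutting cycles, and the allowed quadrangulations that are critical are exactly patches and the Thomas--Walls gadgets.

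\medskip

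Next I would propagate the local classification along the chain. Composing the per-piece transfer relations on boundary colorings, I would track how the set of extendable precolorings of $C_1$ transforms as we move from $C_1$ toward $C_2$. If \emph{every} piece is of the quadrangulation type, concatenation yields a near $3,3$-quadrangulation $H$ spanning $C_1$ to $C_2$, giving the third outcome. If some piece is a Thomas--Walls gadget, then because the chain is long, many such gadgets line up in series; composing consecutive Thomas--Walls steps reconstructs a reduced Thomas--Walls graph $T'_m$, the intervening patches realize the patching operation, and the fixed triangular/quadrilateral rings at the two ends are exactly the framing on the interface pairs $u_1u_3$, $v_1v_3$. This produces the subgraph $H$ of the second outcome. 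The constant $c$ is chosen large enough that the finitely many ``exceptional'' short configurations cannot fill the whole chain, so one of the two nontrivial structures must appear.

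\medskip

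The main obstacle, I expect, is the per-piece classification in step two: showing that criticality together with tameness and short boundaries leaves essentially only the Thomas--Walls gadgets, the patches, and the quadrangulation blocks, with no stray critical configurations. This requires a careful discharging or face-counting argument to rule out alternative critical quadrangulations, and delicate handling of the boundary cases where a triangular ring $C_i$ is allowed to intersect an adjacent $4$-cycle $C_j$ (the exceptional clause in the chain definition). Controlling the interaction of patching with the Thomas--Walls recursion --- in particular checking that patched degree-$3$ vertices do not destroy or create unintended $4$-cycles, so that the reconstructed $H$ is genuinely a patched Thomas--Walls graph framed on its interface pairs --- will also demand care, and is where I would expect most of the technical work to concentrate.
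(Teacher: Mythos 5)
Your high-level dichotomy (many non-quadrangulated links force a Thomas--Walls structure; otherwise a quadrangulated stretch forces the near $3,3$-quadrangulation outcome) matches the paper's strategy, but the proposal has genuine gaps at both branches, precisely where the technical weight of the paper lies. In the quadrangulation branch, your claim that ``if every piece is of the quadrangulation type, concatenation yields a near $3,3$-quadrangulation'' is not correct as stated: a long chain of quadrangulations between two $4$-cycles admits \emph{every} precoloring of its boundary (this is the content of Lemma~\ref{lemma-col44}), so no obstruction arises there at all. The third outcome of the theorem only occurs when the precoloring forces opposite winding numbers on two interior triangles, and one must then show that the \emph{non}-quadrangulated end segments of $G$ are themselves near $3,3$-quadrangulations. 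That is Lemma~\ref{lemma-orfor}, a separate induction showing that a critical graph in which some boundary coloring forces the winding number of a triangular ring must be a near $3,3$-quadrangulation; nothing in your outline supplies this step.

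In the Thomas--Walls branch, your ``per-piece classification'' is asserted rather than proved, and it is the hardest part of the paper. The paper does not classify individual links directly: it repeatedly performs legal identifications in $4$-faces (Lemmas~\ref{lemma-legalidentification} and \ref{lemma-4face}) to reduce to a finite list of \emph{basic} graphs, further collapses those to graphs with no contractible $(\le\!4)$-cycles, and then invokes an external classification (Theorem~\ref{thm-dvolid}, from~\cite{dvolid}) of critical cylinder graphs without short contractible cycles --- these are exactly the reduced Thomas--Walls graphs. Moreover, to upgrade ``dominated by $T'_4$'' to ``contains a framed patched Thomas--Walls subgraph,'' the paper needs Corollary~\ref{cor-xi}, which rests on Theorem~\ref{thm-crtri}; that theorem in turn requires the Borodin et al.\ classification of planar $4$-critical graphs with four triangles (Theorem~\ref{thm-4c4t}) and the Kostochka--Yancey density bound (Theorem~\ref{thm-crdens}) to rule out stray critical configurations. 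Your appeal to Aksenov's one-triangle extension theorem and a generic ``discharging or face-counting argument'' does not substitute for these ingredients: without the identification machinery and the two external classification results, there is no way to conclude that the only colouring obstructions assembling along the chain are Thomas--Walls gadgets.
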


Aksenov~\cite{aksenov} proved the following strengthening of Gr\"otzsch's theorem (fixing a previous flawed proof of this
fact by Gr{\"u}nbaum~\cite{grunbaum}).

\begin{theorem}[Aksenov~\cite{aksenov}]\label{thm-3t}
Every planar graph with at most $3$ triangles is $3$-colorable.
\end{theorem}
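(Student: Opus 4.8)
The plan is to argue by contradiction via a minimal counterexample together with a discharging argument, in the spirit of the classical proofs of Gr\"otzsch's theorem~\cite{grotzsch1959}. First I would take a plane graph $G$ with at most three triangles that is not $3$-colorable and has the fewest vertices; such a $G$ is \emph{$4$-critical}, meaning every proper subgraph is $3$-colorable. Standard arguments then give that $G$ is $2$-connected (otherwise one $3$-colors the blocks separately and glues the colorings at a cut vertex, each block having at most three triangles), that $G$ has minimum degree at least $3$, and that $G$ has no separating triangle (a separating triangle $T$ would split $G$ into two smaller graphs, each with at most three triangles; one $3$-colors each side and permutes colors so they agree on $T$). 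Consequently every triangle of $G$ bounds a face, so the at most three triangles are facial and all remaining faces have length at least $4$.

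Next I would run a discharging argument. Assign to each vertex $v$ the charge $d(v)-4$ and to each face $f$ the charge $\ell(f)-4$, where $\ell(f)$ is the length of $f$; by Euler's formula the total charge equals $-8$. Under the reductions above the only negative charges sit on degree-$3$ vertices (charge $-1$) and on the at most three triangular faces (charge $-1$ each). The idea is to redistribute charge from faces of length at least $5$ and from vertices of degree at least $4$ onto the degree-$3$ vertices, using the Gr\"otzsch-type rules that succeed in the triangle-free setting, and to show that away from the three triangles every vertex and every $(\ge\!4)$-face ends with nonnegative charge. Since the triangles can inject only a bounded amount of negative charge into their bounded neighborhoods, the final total is bounded below by a constant, and the aim is to show this bound is strictly larger than $-8$, yielding the contradiction.

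The main obstacle is controlling the charge deficit created by the triangles. Because the global budget $-8$ is close to the $-3$ contributed directly by the three triangular faces, the discharging must lose almost nothing elsewhere, which forces a careful local analysis of how the triangles interact with nearby degree-$3$ vertices and with one another. In particular I expect to need reducibility lemmas forbidding certain small configurations in a $4$-critical graph---for instance a degree-$3$ vertex incident with a triangle, or two triangles joined by a short path---so that each triangle's influence is confined and quantifiable. A clean alternative, better suited to precoloring problems, is to induct on the number of triangles: using Aksenov's precoloring-extension result for graphs with at most one triangle, one isolates a single triangle, precolors it, and reduces to a graph with fewer triangles; here the difficulty shifts to formulating an inductive statement strong enough to absorb the precolored triangle while keeping the triangle count under control.
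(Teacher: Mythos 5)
The paper does not prove this statement at all: it is quoted verbatim from Aksenov~\cite{aksenov} as a known black box (the paper even notes that Gr\"unbaum's earlier proof was flawed, which is a hint that the result is delicate). So there is no in-paper argument to compare against; your proposal has to stand on its own, and as it stands it is a plan with the hard part missing rather than a proof.

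The reductions you list (minimality, $2$-connectivity, minimum degree $3$, no separating triangle, hence all triangles facial) are correct and standard. The genuine gap is everything after that. First, your discharging frame cannot close as described: with charges $d(v)-4$ and $\ell(f)-4$ the total is identically $-8$, $4$-faces carry charge $0$ and so have nothing to donate, and a $4$-critical plane graph can have linearly many degree-$3$ vertices each carrying $-1$; there is no set of ``Gr\"otzsch-type rules'' that makes all of them nonnegative, because even the triangle-free case is not proved by naive discharging --- it needs either Thomassen-style precoloring-extension induction or the elimination of $4$-faces followed by an edge-count against a criticality bound such as Kostochka--Yancey (Theorem~\ref{thm-crdens} in this paper). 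Second, and this is the crux you do not engage with: eliminating a $4$-face by identifying a pair of opposite vertices can create new triangles, so the hypothesis ``at most $3$ triangles'' is not preserved by the natural reduction. Controlling which identifications are safe, and what happens when none is, is precisely the content of Aksenov's (and Borodin's later) proof and of the analogous machinery this paper builds for its own results (cf.\ the ``legal identification'' analysis in Section~\ref{sec-bc} and the $4$-face identification step in the proof of Theorem~\ref{thm-crtri}). Your closing alternative --- inducting on the number of triangles via precoloring extension --- is closer to how such results are actually proved, but you correctly note that you do not have the inductive statement, and finding one strong enough (e.g.\ the precoloring-extension forms recorded here as Theorems~\ref{thm-onetri} and~\ref{thm-53}) is the theorem's real difficulty, not a detail to be filled in.
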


In the course of the proof, Aksenov also established another interesting fact.

\begin{theorem}[Aksenov~\cite{aksenov}]\label{thm-onetri}
Let $G$ be a graph drawn in the cylinder with a ring $C$ of length at most $4$.
If every triangle in $G$ is non-contractible, then every precoloring of $C$ extends to a $3$-coloring of $G$.
\end{theorem}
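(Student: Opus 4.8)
The plan is to reduce the cylindrical problem to a sequence of planar precoloring-extension problems, peeling the graph one triangle at a time starting from the ring $C$. Since every triangle of $G$ is non-contractible, each of them separates $C$ from the second hole of the cylinder, so the triangles are linearly ordered by this separation. I would induct on $|V(G)|$, proving the statement simultaneously for all rings of length at most $4$, and first dispose of connectivity: if $G$ is disconnected or has a cut vertex, one colors the blocks independently and glues them, so I may assume that $G$ is $2$-connected and every face is bounded by a cycle.

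The core of the argument is the inductive step. Let $T$ be the non-contractible triangle closest to $C$, chosen so that no triangle lies strictly between $C$ and $T$; cutting the cylinder along $T$ splits $G$ into an outer part $G_{\mathrm{out}}$ (a graph in the cylinder with rings $C$ and $T$ whose only triangle is $T$) and an inner part $G_{\mathrm{in}}$ (a graph with ring $T$ and strictly fewer vertices than $G$, since $|C|=4$ forces $C$ to have a vertex outside $T$). By the induction hypothesis applied to $G_{\mathrm{in}}$ with the length-$3$ ring $T$, every precoloring of $T$ extends to $G_{\mathrm{in}}$; thus $G_{\mathrm{in}}$ imposes no constraint on how $T$ is colored. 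It therefore suffices to extend the precoloring $\phi$ of $C$ to a $3$-coloring of $G_{\mathrm{out}}$: the resulting colors on $T$ can be propagated into $G_{\mathrm{in}}$ and the two colorings glued along $T$.

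To color $G_{\mathrm{out}}$, I cap the hole bounded by $T$ with a disk, turning $G_{\mathrm{out}}$ into a plane graph drawn in the disk with ring $C$ and a single triangle $T$ (now a triangular face). When $|C|=4$ this is exactly the setting of Aksenov's precoloring-extension strengthening of Gr\"otzsch's theorem (a planar graph with at most one triangle in which every precoloring of a $4$-cycle extends, see~\cite{aksenov}), which supplies the desired $3$-coloring. When $C$ is shorter the question collapses to plain $3$-colorability: any two proper colorings of a single vertex, an edge, or a triangle differ by a permutation of the three colors, so $\phi$ extends if and only if the capped graph is $3$-colorable, and the latter follows from Theorem~\ref{thm-3t} since at most three triangles survive the capping. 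The triangle-free base case is identical, taking $G_{\mathrm{out}}=G$ and capping the second hole directly.

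The step I expect to be the main obstacle is setting up the topological decomposition cleanly, rather than the coloring inputs, which are Aksenov's planar theorems taken as black boxes. One must argue that a non-contractible triangle closest to $C$ can always be chosen so that the region between $C$ and $T$ is genuinely triangle-free, even though the triangles of $G$ may share vertices and may meet $C$; and one must treat the degenerate case $|C|=3$, where $C$ is itself the closest triangle and $G_{\mathrm{out}}$ would be trivial, by peeling instead at the next triangle and invoking Theorem~\ref{thm-3t} together with the color-permutation reduction above. Once these boundary cases are handled, the induction on $|V(G)|$ closes.
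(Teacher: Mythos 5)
The paper gives no proof of Theorem~\ref{thm-onetri}: it is quoted from Aksenov~\cite{aksenov} as a black box, so there is no in-paper argument to compare yours against. Judged on its own terms, your derivation is sound. Since every triangle separates the two holes, a triangle $T$ minimizing the region between itself and $C$ exists even when triangles share vertices; the annular piece between $C$ and $T$, capped off on the far side, is a plane graph with a single triangle and a precolored $4$-cycle, to which Aksenov's $4$-cycle extension theorem applies; and the piece beyond $T$ is handled by induction on the vertex count with the length-$3$ ring $T$, where ``every precoloring extends'' reduces to plain $3$-colorability by permuting the three colors. The degenerate case $|C|=3$ via Theorem~\ref{thm-3t} applied to the two-triangle piece between $C$ and the next triangle also works. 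One point you should make explicit: if $C=v_1v_2v_3v_4$ had a chord $v_1v_3$, the bichromatic precolorings of $C$ would not extend; this cannot occur because such a chord creates the triangles $v_1v_2v_3$ and $v_1v_3v_4$, at least one of which bounds a disk containing neither hole and is therefore contractible, contradicting the hypothesis. The same observation underlies your claim that $G_{\mathrm{out}}$ contains no triangle other than $T$.

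Two caveats. First, the technicalities you flag (well-definedness of the ``closest'' triangle when triangles intersect, and the pinched regions that arise when $T$ meets $C$ or the previous triangle) are real but routine, and your induction does close once they are spelled out. Second, be aware that both of your external inputs---Theorem~\ref{thm-3t} and the statement that every precoloring of a $4$-cycle in a planar graph with at most one triangle extends---come from the very paper of Aksenov in which Theorem~\ref{thm-onetri} is established \emph{en route} to Theorem~\ref{thm-3t}. So what you have is a clean derivation of one of Aksenov's lemmas from his other results as cited here, which is perfectly legitimate in the context of this paper, but it is not an independent proof from first principles.
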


As a part of the proof of Theorem~\ref{thm-many}, we need to prove a strengthening of  Theorem~\ref{thm-onetri} and describe
the $3$-coloring properties of graphs embedded in the disk with a ring $C$ of size $4$ and with exactly two triangles.
To state this result (Theorem~\ref{thm-crtri} below) of independent interest, it is convenient to introduce the notion
of a critical graph.

Let $C$ be the union of the rings of a graph $G$ embedded in a surface
($C$ is empty when $G$ is embedded in a surface without boundary).
By a \emph{precoloring} of $C$, we mean any proper $3$-coloring of $C$.
We say that $G$ is \emph{critical} if $G\neq C$ and for every proper subgraph $G'$ of $G$ such that $C\subseteq G'$,
there exists a precoloring of $C$ that extends to a $3$-coloring of $G'$, but not to a $3$-coloring of $G$; that is, removing any
edge or vertex not belonging to $C$ affects the set of precolorings of $C$ that extend to a $3$-coloring of the graph.

In particular, consider any graph $H$ embedded in a surface, let $C$ be the union of its rings, and let $G$ be an inclusionwise-minimal
subgraph of $H$ such that $C\subseteq G$ and every precoloring of $C$ that extends to a $3$-coloring of $G$ also extends to
a $3$-coloring of $H$.  Then either $G=C$ or $G$ is critical, and $G$ carries all the information regarding which precolorings of $C$ extend to $H$;
we say that $G$ is a \emph{critical skeleton} of $H$.
Consequently, it suffices to consider the properties of critical graphs, and we will do so in
Theorem~\ref{thm-crtri} as well as in many of the further results.

We need another construction related to Thomas-Walls graphs.  For $n\ge 1$, let $G=T'_n$ be a reduced Thomas-Walls graph
with rings $C_1=u_1u_2u_3u_4$ and $C_2=v_1v_2v_3v_4$ and interface pairs $u_1u_3$ and $v_1v_3$.
Consider the embedding of $T'_n$ in the disk, obtained by closing the hole in the face bounded by $C_1$.
Let $G'$ be a graph obtained from $G$ by either adding the edge $u_1u_3$, or the subgraph depicted in Figure~\ref{fig-havel}(a) (this graph is
often called \emph{Havel's quasiedge}, since Havel~\cite{conj-havel} used it to disprove a conjecture by Gr\"unbaum that every planar graph without intersecting triangles is 3-colorable).
A \emph{patched Havel-Thomas-Walls graph} is any graph obtained from such a graph $G'$ by patching, and $v_1v_3$ is its \emph{interface pair}.

\begin{figure}
\begin{center}
\includegraphics[page=1]{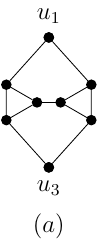}
\includegraphics[page=2]{fig-havel}
\includegraphics[page=3]{fig-havel}
\end{center}
\caption{Graphs related to Theorem~\ref{thm-crtri}.}\label{fig-havel}
\end{figure}

Let a \emph{tent} be a graph embedded in the disk with the ring $v_1v_2v_3v_4$, containing vertices $z_1$
adjacent to $v_1$ and $v_2$, and $z_2$ adjacent to $v_3$ and $v_4$, such that all faces other than $v_1v_2z_1$ and $v_3v_4z_2$
have length $4$, and $v_1v_3,v_2v_4\not\in E(G)$, see Figure~\ref{fig-havel}(c).

\begin{theorem}\label{thm-crtri}
Let $G$ be a graph embedded in the disk with at most two triangles and with the ring of length $4$.
If $G$ is critical, then $G$ is either a tent, or obtained from a patched Havel-Thomas-Walls graph by framing on its interface pair.
\end{theorem}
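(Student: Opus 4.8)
\medskip

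The plan is to characterize critical disk-embedded graphs with ring length $4$ and at most two triangles by an induction that peels off structure from the ring, using Aksenov's results (Theorems~\ref{thm-3t}, \ref{thm-onetri}, \ref{thm-53}) as the base-case engine. First I would dispose of the degenerate cases: if $G$ has at most one triangle, then by Theorem~\ref{thm-onetri} every precoloring of the $4$-cycle ring extends, so $G$ cannot be critical unless $G=C$, and thus $G$ has exactly two triangles. So assume $G$ has exactly two triangles $T$ and $T'$; since the ring $C=v_1v_2v_3v_4$ has length $4$, criticality forces every other face to be bounded by a short cycle, and a discharging or Euler-formula count (as in the derivation of Theorem~\ref{thm-53}) should show that all non-triangular, non-ring faces have length exactly $4$. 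The core dichotomy is whether the two triangles are vertex-disjoint or share structure, and how they sit relative to the ring.

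\medskip

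The main structural step is to understand how a precoloring $\psi$ of $C$ that does \emph{not} extend is ``blocked'' by the triangles. I would fix a non-extending precoloring and argue that the obstruction propagates along a sequence of $4$-faces, exactly as in the Thomas--Walls graphs: each $4$-face forces the colors on its far edge, producing a ``quadrangulated corridor'' connecting the ring to the triangles or connecting the two triangles to each other. Concretely, I expect that chasing the forced colors yields one of the reduced Thomas--Walls graphs $T'_n$ as the underlying skeleton, with the interface pair $u_1u_3$ being precisely the edge-or-quasiedge recording how the obstruction closes up near $C_1$. The patching operation is what accounts for the freedom in the internal $4$-faces (each degree-$3$ vertex expanded into a patch), and framing records how the ring $C$ attaches to the skeleton; so the bulk of the argument is to show that \emph{every} critical $G$ with two triangles, once its $4$-face corridors are identified, is isomorphic to a patched Havel--Thomas--Walls graph framed on its interface pair---unless the two triangles are ``independent'' and each attaches directly to a distinct pair of ring edges, which is exactly the tent.

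\medskip

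Mechanically, I would set up an induction on $|V(G)|$: locate a $4$-face $F$ adjacent to the ring (or adjacent to a triangle), use the forced colors across $F$ to identify a smaller critical graph $G^-$ with a shifted ring of length $4$, apply the inductive hypothesis to classify $G^-$, and then show the reduction is reversible, i.e.\ reattaching $F$ corresponds to either one step of the Thomas--Walls recursion ($T'_{n-1}\to T'_n$), one patching step, or the framing step. Theorem~\ref{thm-53} is the crucial tool for the interface: it pins down \emph{which} vertices of a length-$5$ ring must be incident with the single triangle in the one-triangle subproblem that arises after removing one triangle, and this is what forces the Havel quasiedge versus the plain edge $u_1u_3$ in the base of the construction. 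The hardest part will be the bookkeeping that guarantees \emph{completeness and exclusivity}: showing that no critical graph escapes the list (so the corridor-chasing cannot terminate in any configuration other than a tent or a framed patched Havel--Thomas--Walls graph), and that the tent is genuinely not captured by the Thomas--Walls family. I expect this exhaustiveness check---ruling out stray $4$-faces, confirming that $v_1v_3,v_2v_4\notin E(G)$ in the tent case, and verifying that the two triangles cannot be placed in any further inequivalent way---to be the main obstacle, requiring careful case analysis on the cyclic positions of the triangles' attachment edges around the ring.
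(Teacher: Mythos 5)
Your plan has the right general shape (induction on $|V(G)|$, reduction through a $4$-face, Aksenov's theorems as supporting tools), but it contains a step that is false and omits the two external ingredients the argument cannot do without. First, your claim that an Euler-formula count shows ``all non-triangular, non-ring faces have length exactly $4$'' is contradicted by the theorem's own conclusion: a patched Havel--Thomas--Walls graph (indeed already $T'_n$ for $n\ge 2$) contains $5$-faces, so no such count can succeed, and the ``quadrangulated corridor'' picture you build on top of it does not describe these graphs. What a counting argument \emph{can} give --- and what the paper extracts by combining Euler's formula with the Kostochka--Yancey density bound (Theorem~\ref{thm-crdens}) applied to the graph obtained from $G$ by identifying $v_1$ with $v_3$ and $v_2$ with $v_4$ --- is merely the \emph{existence} of one $4$-face other than the one bounded by the ring, which is what the induction needs to get started. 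You offer no mechanism for producing that $4$-face.

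Second, your ``chase the forced colors and recover $T'_n$'' step is where all the real content lies, and as written it is a restatement of the goal rather than an argument. The paper does not propagate colors; it splits on which precolorings of $C$ extend. If some diagonal precoloring fails to extend, it adds the edge $v_1v_3$, obtains a non-$3$-colorable planar graph with exactly four triangles and an edge meeting two of them, and invokes the Borodin--Dvo\v{r}\'ak--Kostochka--Lidick\'y--Yancey classification (Theorem~\ref{thm-4c4t}) to conclude that $G$ is a framed patched Havel--Thomas--Walls graph (Corollary~\ref{cor-patched}); without citing that classification, your corridor-chasing would have to reprove its long case analysis from scratch. In the remaining case only the bichromatic precoloring fails to extend; the paper then identifies opposite vertices of a $4$-face, applies induction to a maximal critical subgraph, and --- in the part you defer to ``bookkeeping'' --- constructs explicit bichromatic extensions via Theorems~\ref{thm-6cyc} and~\ref{thm-7cyc} to contradict criticality in every outcome except the tent. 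So the concrete gaps are: no argument that a usable $4$-face exists, a false structural claim about face lengths, and no substitute for Theorem~\ref{thm-4c4t}.
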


It is important to note that the precolorings of the rings which extend to graphs appearing in the conclusions of
Theorems~\ref{thm-many} and \ref{thm-crtri} can be precisely described, as we show in Section~\ref{sec-htw}
(Lemma~\ref{lemma-col-tw} for framed patched Thomas-Walls graphs, Lemma~\ref{lemma-col-33} for near $3,3$-quadrangulations,
Corollary~\ref{cor-col-pt} for tents, and Lemma~\ref{lemma-col-htw} for framed patched Havel-Thomas-Walls graphs).
In particular, in the structure theorem we aim for, it is satisfactory to have pieces that are patched reduced Thomas-Walls graphs.

Furthermore, in Theorem~\ref{thm-many}, all non-ring faces $f$ of the graphs $H$ are bounded by contractible $(\le\!5)$-cycles,
and the subgraph $G[f]$ of $G$ drawn in the closure of $f$ contains no triangles.
Consequently, every precoloring of the cycle bounding $f$ extends to a $3$-coloring of $G[f]$ (see e.g. Lemma~\ref{lemma-split} below).
We conclude that a precoloring of the rings of $G$ extends to a $3$-coloring of $G$ if and only if it extends to a $3$-coloring of $H$.

The rest of the paper is structured as follows. In Section~\ref{sec-htw}, we describe possible colorings of the graphs appearing in Theorems~\ref{thm-many} and \ref{thm-crtri}.
Section~\ref{sec-bc} is devoted to examining a chain $G$ of graphs, many of which are not quadrangulations, showing that either all precolorings of the rings of $G$ extend,
or that the rings of $G$ have length $4$ and all their precolorings which extend to a $3$-coloring of the reduced Thomas-Walls graph $T'_4$ also extend to $G$.
This allows us to prove Theorem~\ref{thm-crtri} in Section~\ref{sec-crtri}.  In Section~\ref{sec-qua}, we continue by examining chains that contain a long subchain consisting only
of quadrangulations and show that either every precoloring of the rings extends or the chain is a near $3,3$-quadrangulation.
We combine these results and prove Theorem~\ref{thm-many} in Section~\ref{sec-many}.

\section{Colorings of the special graphs}\label{sec-htw}

In this section we study which precolorings of rings extend in the special graphs appearing in Theorems~\ref{thm-many} and \ref{thm-crtri}.
We examine patched Thomas-Walls graphs in Lemma~\ref{lemma-col-tw}, patched Havel-Thomas-Walls graphs in Lemma~\ref{lemma-col-htw} and tame almost $3,3$-quadrangulations in Lemma~\ref{lemma-col-33}.

We need the following result of Aksenov on the extendability of the precoloring of a $5$-cycle.

\begin{theorem}[Aksenov~\cite{aksenov}]\label{thm-53}
Let $G$ be a graph embedded in the disk with a ring $C=v_1v_2v_3v_4v_5$ tracing its boundary.
Suppose that $G$ contains exactly one triangle $T$.
If $G$ is critical, then all faces of $G$ other than the one bounded by $T$ have length $4$.
Furthermore, if $\psi$ is the $3$-coloring of $C$ given by $\psi(v_1)=\psi(v_3)=1$, $\psi(v_2)=\psi(v_4)=2$
and $\psi(v_5)=3$ and $\psi$ does not extend to a $3$-coloring of $G$, then $v_2$ and $v_3$ are incident with $T$.
\end{theorem}

We also need the following result of Gimbel and Thomassen~\cite{gimbel} on the extendability of the precoloring of a $(\le\!6)$-cycle.

\begin{theorem}[Gimbel and Thomassen~\cite{gimbel}]\label{thm-6cyc}
Let $G$ be a triangle-free graph drawn in the disk with a ring $C$ of length at most $6$ tracing its boundary.
If $G$ is critical, then $|C|=6$ and every face of $G$ has length $4$.

Furthermore, suppose $G'$ is a graph drawn in the disk with a ring $C'=v_1\ldots v_6$ of length $6$ tracing its boundary,
such that all faces of $G'$ have length $4$.  Then a precoloring $\psi$ of $C'$
does not extend to a $3$-coloring of $G'$ if and only if one of the following conditions holds.
\begin{itemize}
\item There exists $i\in\{1,2,3\}$ such that $v_iv_{i+3}\in E(G')$ and $\psi(v_i)=\psi(v_{i+3})$, or
\item $\psi(v_1)=\psi(v_4)\neq\psi(v_2)=\psi(v_5)\neq\psi(v_3)=\psi(v_6)\neq \psi(v_1)$.
\end{itemize}
\end{theorem}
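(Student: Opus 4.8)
The plan is to separate the statement into its structural half ($|C|=6$ and every other face has length $4$) and its coloring half (the two obstructions), and to derive the coloring half from a winding-number invariant of quadrangulations.

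For the structure, I would first argue that a critical $G$ is $2$-connected and has no internal vertex of degree at most $2$ (such a vertex can be deleted or suppressed without affecting which precolorings extend, contradicting criticality). Since $G$ is triangle-free, $|C|\ge 4$, and a chord of $C$ joining two vertices at cyclic distance $2$ would close a triangle and is therefore excluded; in particular, for $|C|=6$ the only possible chords are the long ones $v_iv_{i+3}$. To rule out $|C|\in\{4,5\}$ I would invoke the strengthening of Gr\"otzsch's theorem stating that every precoloring of a ring of length at most $5$ in a triangle-free disk graph extends; this is proved by induction on $|V(G)|$, deleting low-degree internal vertices and splitting along chords (which, for $|C|\le 5$, are impossible without a triangle), with Gr\"otzsch's theorem as the base case. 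Hence a critical graph must have $|C|=6$. To see that every internal face then has length $4$, I would apply the Euler identity $\sum_{v}(\deg v-4)+\sum_{f}(\deg f-4)=-8$: triangle-freeness makes every internal face contribute a nonnegative amount and the outer hexagon contributes $+2$, so a discharging argument (sending charge from the outer face and from faces of length $\ge 5$ to incident degree-$3$ vertices) shows that the total would exceed $-8$ unless every internal face has length $4$. Thus $G$ is a quadrangulation of the hexagonal disk, whose only chords are $v_iv_{i+3}$ for $i\in\{1,2,3\}$, each splitting the disk into two $4$-cycles, each bounding a quadrangulated disk.

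The coloring half rests on the following invariant. Orient $C$ cyclically and, for a proper $3$-coloring $\varphi$ with colors in $\mathbb{Z}/3$, assign to each oriented edge the value $\varphi(\text{head})-\varphi(\text{tail})\in\{+1,-1\}$; call the sum of these values along a closed walk its \emph{winding}. Around any $4$-face the four values are even in number and sum to $0$ modulo $3$ while lying in $[-4,4]$, hence sum to exactly $0$; summing over the faces of the disk, where internal edges cancel, shows that the winding along $C$ is $0$ for every coloring that extends. A monochromatic chord is an obvious obstruction, and the rotational coloring $\psi(v_1)=\psi(v_4)\ne\psi(v_2)=\psi(v_5)\ne\psi(v_3)=\psi(v_6)\ne\psi(v_1)$ has winding $\pm 6$, since its colors make two full turns around the cyclic order of $\mathbb{Z}/3$; both therefore fail to extend, giving the ``only if'' direction.

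For the converse I would prove, by induction on $|V(G)|$, that a precoloring with winding $0$ and no monochromatic chord always extends. If $C$ has a chord $v_iv_{i+3}$, the two bounding $4$-cycles each receive a nonmonochromatic precoloring, so induction (or the length-$4$ extension result) finishes; otherwise I reduce on a suitable internal vertex or contract a face, check that the inherited boundary precoloring still has winding $0$ and no monochromatic chord, extend by induction, and lift the coloring back to $G$. The main obstacle is exactly this converse induction: one must choose the reduction so that the winding is preserved and no ``accidental'' monochromatic chord is introduced in the smaller instance, and the rotational colorings must be tracked with care, since they sit precisely on the boundary between the extendable and non-extendable precolorings and are the configurations most easily mishandled when passing to a minor.
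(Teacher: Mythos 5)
This statement is imported by the paper as a black box: Theorem~\ref{thm-6cyc} is quoted from Gimbel and Thomassen with a citation and no proof is given, so there is no internal argument to compare yours against. Judged on its own, your sketch gets the shape of the result right, and the necessity direction is sound: the winding-number invariant (each edge of a $3$-coloring contributes $\pm 1$, a $4$-face sums to $0$, internal edges cancel, so the winding of $C$ vanishes for any extendable coloring) is exactly the mechanism the paper itself uses later (Lemma~\ref{lemma-cylflow}), and your observation that winding $\pm 6$ on a $6$-cycle occurs precisely for the rotational coloring correctly matches the second obstruction. However, two steps that carry the real content of the theorem are not actually established.

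First, the structural half. Your Euler identity $\sum_v(\deg v-4)+\sum_f(|f|-4)=-8$ does not close as described: internal vertices of degree $3$ each contribute $-1$ and the six ring vertices contribute as little as $-12$, so the naive bound only yields $\sum_{\text{internal }f}(|f|-4)\le 2+(\text{number of internal vertices})$, which excludes nothing. (The wheel-like quadrangulation with one internal degree-$3$ vertex already meets the identity with equality at $-8$, showing there is no slack to exploit without genuine discharging rules that use criticality beyond ``no internal vertex of degree $\le 2$''.) A correct proof of ``critical $\Rightarrow$ quadrangulation of the hexagon'' needs either a carefully designed discharging or the standard inductive/potential-method argument; the one-sentence version asserted here would fail. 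Second, and more seriously, the sufficiency direction --- winding $0$ and no monochromatic long chord implies the precoloring extends --- is the actual theorem, and you explicitly leave it as ``the main obstacle'' rather than proving it. ``Reduce on a suitable internal vertex or contract a face'' is not yet an argument: identifying opposite vertices of an internal $4$-face can create a chord of $C$ or a shorter ring, and verifying that the inherited precoloring is neither rotational nor blocked by a new monochromatic chord is precisely where the case analysis of Gimbel and Thomassen lives. As it stands the proposal proves only the easy implication and the (correct but unproved) reduction of the hard implication to a structural claim whose justification is also incomplete.
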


Theorem~\ref{thm-6cyc} has an important consequence for patches.
\begin{corollary}\label{cor-patch}
Let $G$ be a patch with ring $xaybzc$, and let $H$ be the graph with vertex set $\{x,y,z,v\}$ and edges $xv$, $yv$ and $zv$.  Then a $3$-coloring of $\{x,y,z\}$
extends to a $3$-coloring of $G$ if and only if it extends to a $3$-coloring of $H$.
\end{corollary}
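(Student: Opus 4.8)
The plan is to translate both sides of the claimed equivalence into a single condition on the three colors $\psi(x),\psi(y),\psi(z)$, and then to read the behavior of the patch off Theorem~\ref{thm-6cyc}. First observe that a $3$-coloring of $\{x,y,z\}$ extends to $H$ precisely when the remaining vertex $v$ can be colored, i.e.\ when $\{\psi(x),\psi(y),\psi(z)\}$ omits one of the three colors; equivalently, when $\psi(x),\psi(y),\psi(z)$ are \emph{not} pairwise distinct. Thus it suffices to prove that $\psi$ extends to the patch $G$ if and only if $\psi(x),\psi(y),\psi(z)$ are not pairwise distinct. Write the ring as $C=xaybzc$. Since $C$ is induced and every bounded face of $G$ is a $4$-cycle, $G$ is triangle-free, so by Theorem~\ref{thm-6cyc} every critical subgraph $G_0$ of $G$ containing $C$ is a quadrangulation with ring $C$; moreover $C$ has no chord, so the first alternative of Theorem~\ref{thm-6cyc} cannot occur, and a coloring of $C$ fails to extend to such a $G_0$ exactly when it is of the ``alternating'' type of the second alternative, namely $\psi(x)=\psi(b)\neq\psi(a)=\psi(z)\neq\psi(y)=\psi(c)\neq\psi(x)$. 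I will use the standard fact that a precoloring of $C$ extends to $G$ if and only if it extends to every critical subgraph of $G$ containing $C$ (if it does not extend to $G$, a minimal non-extending subgraph containing $C$ is critical and rejects it).

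For the first direction, suppose $\psi(x),\psi(y),\psi(z)$ are not pairwise distinct, and extend $\psi$ to an arbitrary proper $3$-coloring $\phi$ of the whole ring $C$ (each of $a,b,c$ has only two neighbors on $C$, so a color is available). The alternating pattern forces the three diagonal pairs $\{x,b\}$, $\{a,z\}$, $\{y,c\}$ to be monochromatic with three distinct colors, which would in particular make $\psi(x),\psi(z),\psi(y)$ pairwise distinct; since they are not, $\phi$ is not alternating. Hence $\phi$ extends to every critical subgraph of $G$ containing $C$, and therefore to $G$.

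For the converse, suppose $\psi(x),\psi(y),\psi(z)$ are pairwise distinct, say $1,2,3$. Then the colors of $a,b,c$ are forced, so the only proper extension of $\psi$ to $C$ is the alternating coloring $1,3,2,1,3,2$ along $x,a,y,b,z,c$, and it remains to show this coloring does not extend to $G$. This is the crux, and I will prove it by a winding-number computation valid for any quadrangulation of the disk bounded by a $6$-cycle. Identifying the colors with $\mathbb{Z}_3$ and orienting the edges, let $\ell(\vec{uv})\in\{+1,-1\}$ be the representative of $\phi(v)-\phi(u)\bmod 3$; this is antisymmetric. Around any closed walk the sum of the $\ell$-values is $0$ in $\mathbb{Z}_3$, and around a $4$-cycle it is a sum of four terms $\pm 1$, hence even, so being $\equiv 0\pmod 3$ and lying in $[-4,4]$ it must equal $0$. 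Summing these circulations over all bounded faces, the contributions of interior edges cancel in pairs, so the circulation around $C$ is also $0$. But for the alternating coloring all six consecutive differences along $C$ equal $-1$, giving circulation $-6\neq 0$, a contradiction. Therefore the alternating coloring does not extend to $G$, so $\psi$ does not extend to $G$, completing the argument. The only substantive step is this last parity/winding computation; everything else is bookkeeping on top of Theorem~\ref{thm-6cyc}.
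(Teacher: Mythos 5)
Your proof is correct and follows exactly the route the paper intends: the corollary is stated without proof as a direct consequence of Theorem~\ref{thm-6cyc}, and your derivation (extension to $H$ iff $x,y,z$ are not rainbow; no chords since the ring is induced; the alternating coloring of the hexagon corresponds precisely to rainbow $x,y,z$) is the intended argument. The only added content is your self-contained winding-number computation showing the alternating coloring never extends to a quadrangulated disk, which is the standard fact underlying the second alternative of Theorem~\ref{thm-6cyc} (and the disk analogue of Lemma~\ref{lemma-cylflow}), so this is not a genuinely different approach.
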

\begin{proof}
Let $\psi$ be a $3$-coloring of $\{x,y,z\}$ and let $C=xaybzc$ be the ring of $G$.  If $\psi$ assigns three different colors
to the vertices $x$, $y$, and $z$, so that $\psi$ does not extend to a $3$-coloring of $H$, then
$\psi$ extends uniquely to a $3$-coloring $\psi'$ of $C$, and $\psi'$ does not extend to a $3$-coloring of the patch $G$ by
the second part of Theorem~\ref{thm-6cyc}.  Hence, $\psi$ does not extend to a $3$-coloring of $G$.

Suppose now that $\psi$ extends to a $3$-coloring of $H$, and thus by symmetry we can assume that $\psi(x)=\psi(y)$.
There exists a $3$-coloring $\psi'$ of $C$ extending $\psi$ such that $\psi'(a)\neq \psi'(z)$.  Since $x$ and $y$ have the
same color and one of them is incident to each of $b$ and $c$, we have $\psi'(x)\neq\psi'(b)$ and $\psi'(y)\neq\psi'(c)$. Again by
the second part of Theorem~\ref{thm-6cyc}, $\psi'$ (and thus also $\psi$) extends to a $3$-coloring of $G$.
\end{proof}

That is, replacing a vertex of degree three by a patch does not affect the $3$-colorability of the graph.
We also often use the following mild strenthening of Theorem~\ref{thm-6cyc}.
\begin{lemma}\label{lemma-split}
Let $G$ be a graph drawn in a surface $\Sigma$. Let $K$ be a closed walk of length at most $6$
in $G$ forming the boundary of an open disk $\Lambda\subset\Sigma$, such that
no contractible triangle of $G$ is contained in the closure of $\Lambda$.
Let $G'$ be the subgraph of $G$ drawn in the closure of $\Lambda$.
If a $3$-coloring $\psi$ of $K$ does not extend to a $3$-coloring of $G'$,
then $|K|=6$, $G'$ has a subgraph containing $K$ whose faces in $\Lambda$ all have
length 4,
$K=v_1\ldots v_6$, and either there exists $i\in\{1,2,3\}$ such that $v_iv_{i+3}\in E(G')$ and $\psi(v_i)=\psi(v_{i+3})$, or
$\psi(v_1)=\psi(v_4)\neq\psi(v_2)=\psi(v_5)\neq\psi(v_3)=\psi(v_6)\neq \psi(v_1)$.
\end{lemma}
\begin{proof}
Let $\Delta$ be a closed disk and let $\theta_0$ be a homeomorphism from the interior of $\Delta$ to $\Lambda$ that extends
to a continuous function $\theta$ from $\Delta$ to the closure of $\Lambda$.  Let
$G_\Lambda=\theta^{-1}(G')$ and $K'=\theta^{-1}(K)$.  Note that $G_\Lambda$ is embeded in $\Delta$ and $K'$ is its ring of length $|K|$
tracing the boundary of $\Delta$.  Furthermore, if $K$ is a cycle, then $G_\Lambda$ is isomorphic
to $G'$, and if $K$ is not a cycle ($|K|=6$ and $K$ is a union of two intersecting triangles),
then $G_\Lambda$ is obtained from $G'$ by splitting the vertices appearing multiple times in $K$
in the natural way.

Observe that $\psi'=\psi\circ\theta$
is a $3$-coloring of $K'$ that extends to a $3$-coloring of $G_\Lambda$ if and only if $\psi$ extends to a $3$-coloring of $G$.
Let $G'_\Lambda$ be a critical skeleton of $G_\Lambda$.  If $G'_\Lambda=K'$, then $\psi'$ extends to a $3$-coloring of $G$.
Otherwise, Theorem~\ref{thm-6cyc} implies that $|K'|=6$ and all faces of $G'_\Lambda$ have length $4$, and thus $\theta(G'_\Lambda)$
is a subgraph of $G'$ whose faces in $\Lambda$ all have length $4$.  Furthermore, if $\psi'$ does not extend to a $3$-coloring
of $G'_\Lambda$ (and equivalently, $\psi$ does not extend to a $3$-coloring of $G'$), then $\psi$ must satisfy one of the conditions from the statement
of Lemma~\ref{lemma-split} by the second part of Theorem~\ref{thm-6cyc}.
\end{proof}

Let us give an observation about critical graphs that is often useful.

\begin{lemma}\label{lemma-fr}
Let $G$ be a critical graph drawn in the sphere with holes.
\begin{itemize}
\item Every vertex $v\in V(G)$ that does not belong to the rings has degree at least three.
\item If $K$ is a $(\le\!5)$-cycle in $G$ forming the boundary of an open disk $\Lambda$, and the closure of $\Lambda$
does not contain any contractible triangle of $G$, then $\Lambda$ is a face of $G$.
\item If $K$ is a closed walk of length $6$ in $G$ forming the boundary of an open disk $\Lambda$, and the closure of $\Lambda$
does not contain any contractible triangle of $G$, then either $\Lambda$ is a face of $G$,
or all faces of $G$ contained in $\Lambda$ have length $4$.
\end{itemize}
\end{lemma}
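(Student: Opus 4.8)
The plan is to prove the three bullets separately, each by an extremal/minimality argument exploiting the definition of criticality. Recall that $G$ is critical means $G \neq C$ and that for every proper subgraph $G'$ with $C \subseteq G'$, there is a precoloring of the rings $C$ that extends to $G'$ but not to $G$; equivalently, deleting any vertex or edge outside $C$ strictly enlarges the set of extendable precolorings. I will repeatedly use the contrapositive: if deleting some edge or vertex $v \notin C$ does \emph{not} change which precolorings of $C$ extend, then $G$ is not critical.

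For the first bullet, suppose $v \in V(G)$ is not on the rings and has degree at most two. First, $v$ cannot have degree $0$ or $1$: an isolated vertex or a leaf imposes no constraint on $3$-colorings, so deleting $v$ (and its incident edge) leaves the extendable precolorings of $C$ unchanged, contradicting criticality. If $v$ has degree exactly two with neighbors $a,b$, then for any $3$-coloring of $G - v$, the two forbidden colors at $v$ are $\{\text{color}(a),\text{color}(b)\}$, leaving at least one color free; hence every $3$-coloring of $G - v$ extends to $v$. Thus a precoloring of $C$ extends to $G$ iff it extends to $G-v$, so deleting $v$ does not affect extendability, again contradicting criticality. This gives degree at least three.

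For the second bullet, let $K$ be a $(\le\!5)$-cycle bounding an open disk $\Lambda$ whose closure contains no contractible triangle of $G$, and suppose for contradiction that $\Lambda$ is not a face. Let $H$ be the subgraph of $G$ drawn in the closure of $\Lambda$, so $H$ has the ring $K$ of length at most $5$ and, by hypothesis, is triangle-free (any triangle inside the closure of $\Lambda$ would be contractible since it bounds a disk in $\Lambda$). The strategy is to apply Theorem~\ref{thm-6cyc} to $H$. If $H$ had a critical subgraph with ring $K$, that theorem would force $|K| = 6$, contradicting $|K| \le 5$; hence no such critical subgraph exists, which means every precoloring of $K$ extends to a $3$-coloring of $H$. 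Now I want to contract the interior of $\Lambda$: since every coloring of $K$ extends into $\Lambda$, deleting all vertices and edges strictly inside $\Lambda$ (all of which lie outside $C$, as $K$ bounds a disk whose interior meets no ring) does not change which precolorings of $C$ extend to the whole graph. This contradicts criticality unless $\Lambda$ was already a face.

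For the third bullet, the argument is parallel but uses the full strength of Theorem~\ref{thm-6cyc}. Let $K$ be a closed walk of length $6$ bounding an open disk $\Lambda$ with no contractible triangle in its closure, let $H$ be the subgraph drawn in the closure of $\Lambda$, and suppose $\Lambda$ is not a face and some face of $H$ inside $\Lambda$ has length different from $4$. I will pass to a critical subgraph $H' \subseteq H$ with ring $K$ capturing the nonextendable precolorings (one exists precisely because $\Lambda$ is not a face and criticality of $G$ is inherited here). Since $H$ is triangle-free, Theorem~\ref{thm-6cyc} applies to $H'$ and forces $|K| = 6$ with every non-ring face of $H'$ of length $4$; by the second bullet applied inside $\Lambda$, the quadrilateral faces of $H'$ are also faces of $G$, so all faces of $G$ in $\Lambda$ have length $4$, contradicting the assumption. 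The main obstacle to watch is the bookkeeping when $K$ is a closed \emph{walk} rather than a cycle (it may repeat vertices), so that $H$ need not be $2$-connected and Theorem~\ref{thm-6cyc} must be applied along the boundary cycle obtained by treating repeated vertices as distinct ring vertices; I expect this to be the delicate point, handled by noting that identifications along $K$ only restrict the set of precolorings and so cannot create a nonextendable coloring that Theorem~\ref{thm-6cyc} would rule out for the genuine $6$-cycle.
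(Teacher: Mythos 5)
Your proof is correct and takes essentially the same route as the paper's: the first bullet by recoloring the low-degree vertex, and the second and third bullets by applying Theorem~\ref{thm-6cyc} to the triangle-free subgraph inside $\Lambda$ to conclude that every precoloring of $K$ extends, so that deleting the interior of $\Lambda$ would not change extendability, contradicting criticality. The paper dispatches the last two bullets with only the phrase ``a similar argument using Theorem~\ref{thm-6cyc}'', which is precisely the argument you spelled out (including the correct flag that the length-$6$ closed-walk case needs minor bookkeeping).
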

\begin{proof}
Let $C$ be the union of the rings of $G$, and consider any vertex $v\in V(G)\setminus V(C)$.  Suppose for a contradiction
that $v$ has degree at most $2$.  Let $\psi$ be any $3$-coloring of $C$ that extends to a $3$-coloring $\varphi$ of $G-v$.
Then $\psi$ also extends to a $3$-coloring of $G$, by giving the vertices of $V(G)\setminus\{v\}$ the same color as in the
coloring $\varphi$ and by choosing a color of $v$ distinct from the colors of its neighbors.  This contradicts the assumption
that $G$ is critical.

The second and third claim follow similarly using Lemma~\ref{lemma-split}.
\end{proof}

Furthermore, colorings of tents can be described using Theorem~\ref{thm-6cyc}.
Consider a $4$-cycle $C=u_1u_2u_3u_4$ and its $3$-coloring $\psi$.  We say that $\psi$ is \emph{$u_1$-diagonal} if $\psi(u_1)\neq \psi(u_3)$,
and that it is \emph{bichromatic} if $\psi(u_1)=\psi(u_3)$ and $\psi(u_2)=\psi(u_4)$.  Note that every $3$-coloring of $C$ is $u_1$-diagonal, $u_2$-diagonal, or bichromatic.
The following claim is proved analogously to Corollary~\ref{cor-patch}.
\begin{corollary}\label{cor-col-pt}
If $G$ is a tent with the ring $C=v_1v_2v_3v_4$, then exactly the $v_1$-diagonal and $v_2$-diagonal colorings of $C$
extend to $3$-colorings of $G$.
\end{corollary}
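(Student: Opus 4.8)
The plan is to reduce the tent to a triangle-free quadrangulation of a hexagonal disk and then combine Theorem~\ref{thm-6cyc} with a parity (winding) argument. First I would pass from the ring $C$ to the closed walk $K=v_1z_1v_2v_3z_2v_4$, which is a $6$-cycle since $z_1$ is adjacent to $v_1,v_2$ and $z_2$ to $v_3,v_4$ and all six vertices are distinct. This cycle separates the disk: on one side lie the two triangular faces $v_1v_2z_1$ and $v_3v_4z_2$, each bounded by a ring edge together with two edges of $K$, and on the other side lies a disk $\Lambda$ all of whose faces have length $4$ (here one uses the defining property of a tent that every face other than the two triangles is a $4$-face). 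Let $H$ be the subgraph of $G$ drawn in the closure of $\Lambda$. Since every face of $H$ has even length, $H$ is bipartite and in particular triangle-free, so $K$ is a legitimate ring and Theorem~\ref{thm-6cyc} applies to $H$ and to its subgraphs.

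Next I would set up the correspondence between colorings. In any $3$-coloring of $G$ the color of $z_1$ is forced to be the unique color different from $\psi(v_1)$ and $\psi(v_2)$, and similarly for $z_2$; conversely the only edges of $G$ absent from $H$ are the triangle bases $v_1v_2$ and $v_3v_4$, and these impose no extra constraint because the precoloring $\psi$ is already proper on $C$. Hence a precoloring $\psi$ of $C$ extends to $G$ if and only if the induced forced precoloring $\psi_K$ of $K$ extends to $H$, and it remains to decide, for each $\psi$, whether $\psi_K$ extends.

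I would then translate the bichromatic/diagonal dichotomy into the two failure conditions of Theorem~\ref{thm-6cyc}. A direct check shows that the alternating condition (the second bullet) holds for $\psi_K$ precisely when $\psi(v_1)=\psi(v_3)$ and $\psi(v_2)=\psi(v_4)$, that is, exactly when $\psi$ is bichromatic. For the first bullet, the only possible diagonals of $K$ are $v_1v_3$, $v_2v_4$ and $z_1z_2$; the first two are forbidden in a tent, and one computes that $\psi_K(z_1)=\psi_K(z_2)$ forces $\{\psi(v_1),\psi(v_2)\}=\{\psi(v_3),\psi(v_4)\}$, which with the ring constraints again yields exactly the bichromatic case. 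Thus for non-bichromatic $\psi$ neither failure condition can occur, while bichromatic $\psi$ produces the alternating coloring.

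Finally I would prove the two implications. For non-bichromatic $\psi$, if $\psi_K$ failed to extend to $H$ I would take $H_0$ with $K\subseteq H_0\subseteq H$ minimal subject to $\psi_K$ not extending; such an $H_0$ is critical and triangle-free, so Theorem~\ref{thm-6cyc} forces one of its two failure conditions, contradicting the previous paragraph. Hence $\psi_K$ extends to $H$ and therefore $\psi$ extends to $G$. For bichromatic $\psi$ the forced $\psi_K$ is the alternating coloring, and I would rule out any extension by a winding argument: orienting each edge and recording the color step in $\{+1,-1\}$ modulo $3$, every $4$-face contributes $0$ to the total while $K$ with alternating colors contributes $\pm 6$; summing over the $4$-faces of $H$ would force the boundary sum to be $0$, a contradiction. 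Since the non-bichromatic colorings are exactly the $v_1$-diagonal and $v_2$-diagonal ones, this yields the statement. The main obstacle is this last (bichromatic) nonextension: Theorem~\ref{thm-6cyc} only characterizes failures for \emph{critical} graphs, so it cannot be applied directly to $H$, and the self-contained winding computation is what makes the direction go through; a secondary point requiring care is confirming that $H$ is a genuine quadrangulation with no degenerate hexagonal face, which is exactly the tent's requirement that every face other than the two triangles have length $4$.
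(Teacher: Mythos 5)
Your proof is correct and follows the route the paper intends: the corollary is stated without proof as a direct consequence of Theorem~\ref{thm-6cyc} applied to the quadrangulated hexagon $v_1z_1v_2v_3z_2v_4$ with $z_1,z_2$ forced, which is exactly your reduction. Your extra care about criticality in the bichromatic non-extension direction (supplying the winding-number computation, in the spirit of Lemma~\ref{lemma-cylflow}) is a legitimate filling-in of a detail the paper glosses over, not a different approach.
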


Let $G_0$ be either a reduced Thomas-Walls graph, or a Havel-Thomas-Walls graph.
Let $S$ be an independent set of vertices of $G_0$ of degree three.
Let $C_0=v_1v_2v_3v_4$ be a ring of $G_0$, with the interface pair $v_1v_3$.  Let $G_1$ be obtained from $G_0$ by replacing the vertices of $S$ by patches,
and let $G$ be obtained from $G_1$ by framing on its interface pairs.  Let $C=v_1v'_2v_3v'_4$ be the ring of $G$ corresponding to $C_0$.
We say that the ring $C$ is \emph{strong} if $G\neq C$, $v_2,v_4\not\in S$, $v'_2=v_2$, and $v'_4=v_4$, that is, $v_2$ and $v_4$ are not
affected by patching or created by framing.  Otherwise, we say that $C$ is \emph{weak}.
A $3$-coloring $\psi$ of $C$ is \emph{dangerous} if either $\psi$ is $v_1$-diagonal, or $C$ is strong and $\psi$ is bichromatic.

Let us first deal with Thomas-Walls graphs.

\begin{lemma}\label{lemma-col-tw}
Let $n\ge 1$ be an integer, let $H$ be a patched Thomas-Walls graph obtained from $T'_n$ by patching, and
let $G$ be a graph obtained by framing on interface pairs $u_1u_3$ and $v_1v_3$ of $H$.
Let $C_1=u_1u_2u_3u_4$ and $C_2=v_1v_2v_3v_4$ be the rings of $G$ and let $\psi$ be a precoloring of $C_1\cup C_2$.
If $\psi$ extends to a $3$-coloring of $G$, then it is dangerous on at most one of $C_1$ and $C_2$.
Furthermore, if $n\ge 4$ and $\psi$ is not dangerous on both $C_1$ and $C_2$, then $\psi$ extends to a $3$-coloring of $G$.
\end{lemma}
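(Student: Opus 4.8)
The plan is to reduce to the unpatched graph and then analyze how a $3$-coloring propagates through a single Thomas--Walls gadget. By Corollary~\ref{cor-patch} each patch behaves exactly like a vertex of degree three, so a precoloring $\psi$ of $C_1\cup C_2$ extends to $G$ if and only if it extends to the graph $G_0$ obtained by framing the unpatched reduced Thomas--Walls graph $T'_n$; hence I work with $G_0$. I will use that $T'_n$ is a linear chain of the gadgets of the Thomas--Walls construction, that the interior interface pairs are exactly the chords deleted in passing from $T_n$ to $T'_n$, and that framing on an interface pair only adds a $4$-cycle sharing that pair with the old ring.

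The technical core is a single-gadget computation. Consider the gadget attached to an edge $pq$, namely new vertices $x,y,z$ with edges $px$, $qy$, $qz$, $xy$, $xz$ (the chord $yz$ deleted), whose outer $4$-cycle is $yqzx$ with interface pair $yz$. Since $x$ and $q$ are each adjacent to both $y$ and $z$, a short check on the three colorings of $\{y,z\}$ gives the following transfer step. If the outer $4$-cycle is colored \emph{dangerously} (the interface pair $yz$ receiving distinct colors, or, in the strong case, $yz$ monochromatic while $q$ and $x$ are equal), then every extension forces $\psi(p)\neq\psi(q)$, i.e. the inner pair $pq$ is diagonal; whereas if it is colored \emph{safely} ($yz$ monochromatic with $q,x$ distinct), then the gadget can be colored with $\psi(p)=\psi(q)$, the common color being choosable among a suitable pair. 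Verifying this, together with the bookkeeping of which vertices survive framing (and hence whether a ring is strong or weak), is the main obstacle; everything else is assembled from it.

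For the first part I argue by induction on $n$ (the cases $n\le 2$, where the two rings may share a vertex, being checked directly), proving the contrapositive: if $\psi$ is dangerous on both $C_1$ and $C_2$ then it does not extend. If both rings are diagonal, any extension colors $u_1,u_3$ differently and $v_1,v_3$ differently, so it is a proper $3$-coloring of $T'_n+u_1u_3+v_1v_3=T_n$; as $T_n$ is $4$-critical it is not $3$-colorable, a contradiction. If instead a ring, say $C_1$, is strong and bichromatic, the transfer step shows every extension forces the adjacent inner interface pair to be diagonal; peeling off the $C_1$-gadget leaves a copy of $T'_{n-1}$ whose restricted coloring is diagonal on that pair and still dangerous on $C_2$, so by the induction hypothesis it does not extend, contradicting that the restriction is an extension.

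For the second part, assume $n\ge 4$ and that $\psi$ is not dangerous on both rings; without loss of generality $C_2$ is safe. I construct a coloring of $G_0$ by coloring inward from $C_2$: the transfer step colors the $C_2$-gadget so that the next inner pair is monochromatic, and iterating it through interior gadgets shows that a monochromatic pair can be re-based to \emph{any} color after two gadgets. Using the at least four gadgets as a buffer, I reconcile this propagated coloring with the condition imposed at the $C_1$ end by the transfer step read from $C_1$ inward --- a monochromatic pair of prescribed color if $C_1$ is safe, or a prescribed diagonal pair if $C_1$ is diagonal --- and complete a proper $3$-coloring. The hypothesis $n\ge 4$ is used precisely to guarantee that the buffer is long enough to match the (possibly conflicting) colors demanded at the two ends; for smaller $n$ the ends are too close and further colorings fail to extend. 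Quantifying this freedom exactly, and checking the base of the construction on $T'_4$, is the delicate point of this direction.
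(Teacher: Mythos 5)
Your skeleton matches the paper's proof closely: the ``transfer step'' you verify on a single gadget is exactly the paper's inductive step (the paper peels off the outermost ring $C_2=v_1v_2v_3v_4$, observes that a dangerous coloring of $C_2$ forces $\psi(v_2)=\psi(v_4)$ and hence, via the connecting edge, a diagonal coloring of the interface pair of the next ring of $T'_{n-1}$), and your observation that the doubly-diagonal case reduces to the non-$3$-colorability of the $4$-critical graph $T_n=T'_n+u_1u_3+v_1v_3$ is a correct shortcut for that subcase. However, there are two places where you have flagged the real work rather than done it, and both contain genuine content. First, the reduction to the unpatched graph is not the immediate consequence of Corollary~\ref{cor-patch} that you assert: the statement ``$\psi$ extends to $G$ iff it extends to the framed unpatched $T'_n$'' does not even typecheck, because when a ring vertex $u_2$ of $T'_n$ is patched or replaced by framing, the ring of $G$ contains a different vertex $u'_2$, and the extendability of $\psi$ genuinely changes (this is precisely why the lemma distinguishes strong from weak rings: a bichromatic coloring of a weak ring \emph{does} extend, while on a strong ring it does not). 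The paper resolves this by introducing an intermediate graph $G_1$ patched only at interior vertices, defining a modified precoloring $\psi'$ of its (strong) rings, and proving that $\psi$ is dangerous on $C_i$ iff $\psi'$ is dangerous on $C'_i$ and that extensions transfer in both directions; this occupies roughly half of the proof and cannot be dismissed as bookkeeping.

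Second, in the extension direction your buffer argument is plausible but incomplete as stated. Coloring inward from a safely colored $C_2$, the monochromatic color of the next interface pair is \emph{forced} to be the color of the shared vertex $q$, and at each subsequent gadget it can be moved to either of the two other colors but never kept; so the set of achievable states at the meeting point, and its compatibility with the three possible prescriptions at the $C_1$ end (safe, diagonal, or strong-bichromatic), has to be computed explicitly, and the whole argument bottoms out in a finite verification on $T'_4$ that you acknowledge but do not perform. The paper does exactly this (``by a straightforward case analysis, this is true when $n=4$'') and then inducts from $n=4$ upward by pushing a non-dangerous coloring one ring inward. Since both of the steps you defer are where the hypotheses of the lemma ($n\ge 4$, the strong/weak dichotomy) actually enter, the proposal as written has real gaps, even though the route it sketches is the correct one.
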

\begin{proof}
Firstly, suppose that $G$ is the reduced Thomas-Walls graph $T'_n$.  We proceed by induction on $n$.  
The claims obviously hold when $n=1$.  Hence, assume that $n\ge 2$, and thus both $C_1$ and $C_2$ are strong.
Let $G$ be obtained from a copy $G'$ of $T'_{n-1}$ with rings $C_1$ and $C'_2=v'_1v'_2v'_3v_4$ (with interface pairs $u_1u_3$ and $v'_2v_4$)
by adding the ring $C_2$ and the edge $v_2v'_2$.  Let $\psi$ be a $3$-coloring of $C_1\cup C_2$.

Suppose for a contradiction that $\psi$ is dangerous on both $C_1$ and $C_2$ and extends to a $3$-coloring $\varphi$ of $G$; then $\psi(v_2)=\psi(v_4)$, and because of the edge $v_2v'_2$, $\varphi$ is $v'_2$-diagonal
on $C'_2$, and thus it is dangerous on $C'_2$.  This is a contradiction by the induction hypothesis for $G'$.
Therefore, if $\psi$ extends to a $3$-coloring of $G$, then it is dangerous on at most one of $C_1$ and $C_2$.

Suppose now that $\psi$ is dangerous on at most one of $C_1$ and $C_2$, and $n\ge 4$.
We need to show that $\psi$ extends to a $3$-coloring of $G$.
By a straightforward case analysis, this is true when $n=4$, and thus assume that $n\ge 5$.
By symmetry, we can assume that $\psi$ is not dangerous on $C_2$, and thus $\psi(v_2)\neq \psi(v_4)$.  Color
$v'_2$ by $\psi(v_4)$ and give $v'_1$ and $v'_3$ distinct colors; the obtained coloring of $C'_2$ is $v'_1$-diagonal, and thus it is not dangerous on $C'_2$.  By the induction hypothesis, we can
extend the coloring to $G'$.  This gives a $3$-coloring of $G$ extending $\psi$.

Therefore, the claim holds for reduced Thomas-Walls graphs.  Suppose that $G$ is obtained from $G_0=T'_n$ by patching on an independent set $S$
and framing on the interface pairs.
Let $S_1$ be the subset of $S$ consisting of vertices not incident with the rings.
Let $G_1$ be the graph such that $G_1$ is obtained from $G_0$ by patching on $S_1$ and $G$ is obtained from $G_1$ by patching on $S\setminus S_1$ and
framing on the interface pairs.  By Corollary~\ref{cor-patch} and the previous analysis of $T'_n$, the graph $G_1$ satisfies the conclusions of
Lemma~\ref{lemma-col-tw}.

Since the vertices of the interface pairs of $G_0$ have degree two, they do not belong to $S$.
We define a coloring $\psi'$ of the rings of $G_1$ as follows.
Suppose that $C'_1=u_1u'_2u_3u'_4$ is a ring of $G_1$, where $u'_2$ has degree three.
Let $\psi'(u_1)=\psi(u_1)$ and $\psi'(u_3)=\psi(u_3)$.
If $\psi$ is $u_1$-diagonal on $C_1$, or if $u'_2\not\in S$, $u_2=u'_2$, and $u_4=u'_4$, 
then let $\psi'(u'_2)=\psi(u_2)$ and $\psi'(u'_4)=\psi(u_4)$.  Otherwise, choose $\psi'(u'_2)$ and $\psi'(u'_4)$
so that $\psi'_i$ is $u'_2$-diagonal on $C'_1$ and so that $\psi'(u'_i)=\psi(u_i)$ for all $i\in\{2,4\}$ such that $u_i=u'_i$.
Define $\psi'$ on the other ring $C'_2$ of $G_1$ analogously.
Note that for $i\in\{1,2\}$, $C'_i$ is strong in $G_1$, and if $C'_i\neq C_i$, then $C_i$ is weak in $G$; hence, $\psi$ is dangerous on $C_i$ if and only if $\psi'$
is dangerous on $C'_i$.

Suppose first for a contradiction that $\psi$ is dangerous both on $C_1$ and $C_2$, and that it extends to a $3$-coloring $\varphi$ of $G$.
\begin{itemize}
\item If $C_1$ is weak, then $\psi$ is $u_1$-diagonal on $C_1$, and $\psi'(u'_i)=\varphi(u'_i)=\psi(u_i)$ for $i\in\{2,4\}$. Let $z$ be the neighbor of $u'_2$ distinct from $u_1$ and $u_3$.
If $u'_2\not\in S$, then $u'_2z$ is an edge of $G$, and $\varphi(z)\neq\psi'(u'_2)$.  If $u'_2\in S$, then $\varphi(z)\neq\psi'(u'_2)$ by the second part of Theorem~\ref{thm-6cyc} applied to the patch $P$
replacing $u'_2$ and the $3$-coloring of $P$ given by $\varphi.$
\item If $C_1$ is strong, then $u'_2=u_2$, $u'_4=u_4$, and $u'_2z$ is an edge of $G$.
\end{itemize}
Together with a similar argument applied at $C_2$, we conclude that $\psi'\cup(\varphi\restriction V(G_1))$
is a $3$-coloring of $G_1$ extending $\psi'$, which is a contradiction since $\psi'$ is dangerous both on $C'_1$ and $C'_2$
and the conclusions of Lemma~\ref{lemma-col-tw} are satisfied by $G_1$ as we argued before.

Next, suppose that $\psi$ is not dangerous on one of the rings, say on $C_1$, and that $n\ge 4$. Then $\psi'$ is not dangerous on $C'_1$, and thus it extends to a $3$-coloring
$\varphi'$ of $G_1$.  If $u'_2\not\in S$, then let $\varphi_1$ be an empty coloring.
If $u'_2\in S$, then by Theorem~\ref{thm-6cyc}, there exists a $3$-coloring $\varphi_1$ of
the patch replacing $u'_2$ such that $\varphi_1(x)=\psi(x)$ for $x\in\{u_1,u_2,u_3\}$ and $\varphi_1(z)=\varphi'(z)$, where $z$ is the neighbor of $u'_2$ distinct from $u_1$ and $u_3$.
Let $\varphi_2$ be defined analogously at $C_2$.  Observe that $\psi\cup\varphi_1\cup \varphi_2\cup (\varphi'\restriction V(G))$ is a $3$-coloring of $G$ extending $\psi$.
\end{proof}

Using this lemma, we can easily handle Havel-Thomas-Walls graphs as well.

\begin{lemma}\label{lemma-col-htw}
Let $G$ be a graph obtained by framing on the interface pair $v_1v_3$ of a patched Havel-Thomas-Walls graph,
with ring $C=v_1v_2v_3v_4$.  A $3$-coloring $\psi$ of $C$ extends to a $3$-coloring of $G$ if and only if $\psi$ is not dangerous on $C$.
\end{lemma}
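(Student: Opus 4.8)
The plan is to reduce everything to Lemma~\ref{lemma-col-tw} via the observation that each of the two admissible caps on the interface pair $u_1u_3$ forces the pair to receive distinct colors, while conversely any $u_1$-diagonal coloring of the $4$-cycle $C_1=u_1u_2u_3u_4$ extends into the cap. For the edge this is immediate, since adding $u_1u_3$ splits the disk bounded by $C_1$ into the triangles $u_1u_2u_3$ and $u_1u_3u_4$. For the quasiedge, note that Havel's quasiedge is precisely the gadget used in the Thomas--Walls recursion (a triangle $xyz$ with $x$ adjacent to $u_1$ and $y,z$ adjacent to $u_3$): since $\{x,y,z\}$ uses all three colors and $u_3$ avoids the colors of $y,z$, we get $u_3$ equal to the color of $x$, whence $u_1\neq u_3$. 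In fact, capping $T'_n$ with the quasiedge produces exactly the graph obtained by capping $T'_{n+1}$ with the edge (both equal $T_{n+1}$ with the $C_2$-interface edge deleted), so the quasiedge case reduces to the edge case at the next level, and it suffices to treat the edge cap for all $n\ge 1$. Patching is handled exactly as in the proof of Lemma~\ref{lemma-col-tw}, using Corollary~\ref{cor-patch}; so I may assume $G$ is obtained from $T'_n$ by adding the edge $u_1u_3$ and framing on $v_1v_3$, while keeping track of whether $C$ is strong or weak.

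For the only-if direction I would compare $G$ with the two-ring graph $G^\ast$ obtained from the same $T'_n$ by framing on \emph{both} $u_1u_3$ and $v_1v_3$, as in Lemma~\ref{lemma-col-tw}; the two graphs share a common core and differ only in the $C_1$-region, where $G$ carries the edge $u_1u_3$ while $G^\ast$ carries the framing annulus bounded inside by $C_1$ and outside by the ring $C_1^\ast=u_1y_1'u_3w_1'$. An elementary analysis of the two quadrilateral faces $u_1u_2u_3y_1'$ and $u_1u_4u_3w_1'$ of this annulus shows that whenever $C_1$ is colored $u_1$-diagonally, each of $y_1'$ and $w_1'$ is forced to the remaining color, so the induced coloring of $C_1^\ast$ is again $u_1$-diagonal, hence dangerous. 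Thus, if $\psi$ extends to $G$, the cap forces the extension to be $u_1$-diagonal on $C_1$; propagating across the annulus produces a $3$-coloring of $G^\ast$ extending $\psi$ that is dangerous on $C_1^\ast$, so by the first part of Lemma~\ref{lemma-col-tw} (an extendable coloring is dangerous on at most one ring) $\psi$ is not dangerous on $C_2=C$. This argument is valid for all $n$.

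For the if direction, suppose $\psi$ is not dangerous on $C$; I must produce an extension, equivalently a $u_1$-diagonal coloring of $C_1$ that together with $\psi$ extends to the core $T'_n$ framed on $v_1v_3$, after which the cap is filled for free. The main obstacle is that Lemma~\ref{lemma-col-tw} guarantees extensions only for colorings that are not dangerous on \emph{both} rings, whereas here the capped ring $C_1$ must be colored $u_1$-diagonally, i.e.\ dangerously; the case ``dangerous on exactly one ring'' is not covered by its statement. I would therefore re-run the layer-peeling induction from the proof of Lemma~\ref{lemma-col-tw}, strengthening the invariant so that at each step the coloring is kept $u_1$-diagonal on $C_1$ while the newly exposed ring on the $C_2$-side is recolored to be non-dangerous (using that $\psi(v_1)=\psi(v_3)$, together with $\psi(v_2)\neq\psi(v_4)$ when $C$ is strong, exactly as the color $\psi(v_4)$ is pushed onto the next interface vertex in that proof). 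This invariant survives down to the base cases, which must be checked directly for small $n$, with the degenerate case $n=1$ (where $C_1$ and $C_2$ coincide) treated separately. The delicate point is the strong/weak bookkeeping for $C$ — in particular one must also extend \emph{bichromatic} $\psi$ when $C$ is weak — but this is routine once the strengthened invariant is in place.
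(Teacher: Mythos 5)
Your overall strategy is the paper's own: the cap on the interface pair $u_1u_3$ forces $\varphi(u_1)\neq\varphi(u_3)$ in any $3$-coloring, so the coloring is $u_1$-diagonal, hence dangerous, on the capped ring, and Lemma~\ref{lemma-col-tw} then forbids it from being dangerous on $C$; conversely a non-dangerous $\psi$ is extended to a coloring of $T'_n$ that is $u_1$-diagonal on the capped ring and then pushed into the cap, with patching handled via Corollary~\ref{cor-patch} exactly as in Lemma~\ref{lemma-col-tw}. Your only-if argument via the auxiliary doubly-framed graph $G^\ast$ is correct, just more roundabout than needed (trivial framing is allowed, so Lemma~\ref{lemma-col-tw} applies to $T'_n$ directly). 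However, the ``main obstacle'' you identify in the if-direction is not real: the hypothesis ``$\psi$ is not dangerous on both $C_1$ and $C_2$'' in Lemma~\ref{lemma-col-tw} means \emph{dangerous on at most one} ring --- its proof begins that part with ``Suppose now that $\psi$ is dangerous on at most one of $C_1$ and $C_2$, and $n\ge 4$'' --- so the configuration you need (dangerous, indeed $u_1$-diagonal, on the capped ring; not dangerous on $C$) is covered verbatim for $n\ge 4$, and only $1\le n\le 3$ needs a direct check. The strengthened layer-peeling induction you propose, and whose details you defer as ``routine,'' is therefore unnecessary work motivated by a misreading.

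A second, genuine error: Havel's quasiedge is \emph{not} the triangle gadget of the Thomas--Walls recursion. That gadget contains two intersecting triangles ($xyz$ and $u_3yz$), which would make it useless for Havel's purpose of refuting Gr\"unbaum's conjecture on graphs without intersecting triangles; the quasiedge of Figure~\ref{fig-havel}(a) is a larger graph with disjoint triangles. Consequently your claimed identity ``capping $T'_n$ with the quasiedge equals capping $T'_{n+1}$ with the edge'' is false, and the reduction of the quasiedge case to the edge case at the next level does not hold. What does survive --- and is all the paper uses --- is that the quasiedge forces its two ends to receive distinct colors and that any assignment of distinct colors to the ends extends into it; both properties must be verified from the actual gadget rather than obtained by your reduction.
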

\begin{proof}
Consider first the case that $G$ is a Havel-Thomas-Walls graph, obtained from the reduced Thomas-Walls graph $T'_n$ with rings $C$
and $C'=u_1u_2u_3u_4$ and interface pairs $v_1v_3$ and $u_1u_3$ by either adding the edge $u_1u_3$, or the graph in Figure~\ref{fig-havel}(a).
Consider any $3$-coloring $\varphi$ of $G$.  Note that both the edge $u_1u_3$ and the graph from Figure~\ref{fig-havel}(a) ensure that
$\varphi(u_1)\neq\varphi(u_3)$, and thus $\varphi$ is $u_1$-diagonal on $C'$.  Consequently, $\varphi$ is dangerous on $C'$,
and by Lemma~\ref{lemma-col-tw}, it is not dangerous on $C$.  Therefore, if $\psi$ extends to a $3$-coloring of $G$, then $\psi$ is not dangerous on $C$.

Conversely, if $\psi$ is not dangerous on $C$, then we can extend it to a $3$-coloring of $T'_n$ that is $u_1$-diagonal on $C'$
(for $n\ge 4$, this follows by Lemma~\ref{lemma-col-tw}; for $1\le n\le 3$, it is easy to construct the desired colorings),
and further extend the coloring to the graph from Figure~\ref{fig-havel}(a) if present in $G$.

The case that $G$ is obtained from a Havel-Thomas-Walls graph by patching and framing is handled in the same way as in the proof of Lemma~\ref{lemma-col-tw}.
\end{proof}

Next, we consider the colorability of near $3,3$-quadrangulations.  We need some additional definitions.
Given a $3$-coloring $\psi:V(G)\to\{0,1,2\}$ of a graph $G$, let us define an orientation $G_\psi$ of $G$ by orienting every edge $uv\in E(G)$ towards
$v$ if and only if $\psi(v)-\psi(u)\in\{1,-2\}$.
Let $W=v_1v_2\ldots v_k$ be a walk in a graph $G$.  We define $\omega(W,\psi)$ to be the difference between the number of forward and backward edges
of $W$ in $G_\psi$, i.e.,
$$\omega(W,\psi)=|\{i:1\le i\le k-1,v_iv_{i+1}\in E(G_\psi)\}|-|\{i:1\le i\le k-1,v_{i+1}v_i\in E(G_\psi)\}|.$$
Suppose that $G$ is embedded in the cylinder, and let $C$ be a non-contractible cycle in $G$.  We fix one orientation around the cylinder as \emph{positive}.
Let $W$ (with $v_1=v_k$) be a closed walk tracing $C$ in the positive direction.  We define the \emph{winding number} of $\psi$ on $C$ as $\omega(W,\psi)/3$.
Observe that the winding number is an integer of the same parity as the length of $C$.
We need the following result concerning the colorability of quadrangulations, see e.g. the Propositions 4.1 and 4.2 in~\cite{trfree5}.

\begin{lemma}\label{lemma-cylflow}
Let $G$ be a graph embedded in the cylinder with rings $C_1$ and $C_2$, such that all non-ring faces of $G$ have length $4$.
For any $3$-coloring $\varphi$ of $G$, the winding number of $\varphi$ on $C_1$ is equal to the winding number of $\varphi$ on $C_2$.
\end{lemma}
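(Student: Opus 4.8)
The plan is to reinterpret the winding number as the value of an additive, ``flow-like'' quantity on closed walks that vanishes on the boundary of every quadrilateral face, and then to exploit the fact that $C_1$ and $C_2$ cobound the annular region tiled by those faces.

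First I would attach to each oriented edge $uv$ of $G$ a weight $w(uv)\in\{+1,-1\}$, equal to $+1$ when $uv\in E(G_\varphi)$ and $-1$ otherwise. Since $\varphi$ is proper this is well defined, it satisfies $w(vu)=-w(uv)$, and by the definition of $G_\varphi$ we have $w(uv)\equiv\varphi(v)-\varphi(u)\pmod 3$. With this notation $\omega(W,\varphi)=\sum_e w(e)$, the sum ranging over the edges of $W$ oriented along the walk. The point of this reformulation is that $\omega$ is then \emph{additive}: if a closed walk is written as a signed sum of edges, its $\omega$-value is the corresponding signed sum of the $w(e)$, and any edge traversed once in each direction cancels.

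The key step is to show that $\sum_{e\in\partial f}w(e)=0$ for every face $f$ of length $4$, where $\partial f$ is traversed as a closed walk. On the one hand, because the colors return to their starting value around $\partial f$, we get $\sum_{e\in\partial f}w(e)\equiv 0\pmod 3$. On the other hand, $\partial f$ has four edges, each contributing $\pm 1$, so the sum is an even integer of absolute value at most $4$; the only such integer divisible by $3$ is $0$. (This is essentially the same computation that shows the winding number is an integer.)

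Finally I would run the homological argument. The rings $C_1$ and $C_2$ bound the two faces of $G$ containing the holes, and the rest of the cylinder is an annulus tiled precisely by the length-$4$ faces. Orienting each such face coherently with the cylinder, the sum of their boundaries has all interior edges cancelling and reduces to $C_1$ and $C_2$, each traced in the positive direction but with opposite signs, since the induced boundary orientations of an annulus run in opposite rotational senses. Applying $\omega$ and using the previous step, $\omega(C_1,\varphi)-\omega(C_2,\varphi)=\sum_f\sum_{e\in\partial f}w(e)=0$, and dividing by $3$ yields the claim. The one place that requires genuine care is exactly this last bookkeeping of orientations: verifying that with both rings traced positively their contributions to the annulus boundary carry opposite signs, so that the identity comes out as equality rather than as $\omega(C_1,\varphi)=-\omega(C_2,\varphi)$.
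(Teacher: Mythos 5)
Your proof is correct. The paper does not actually prove this lemma — it quotes it from Propositions 4.1 and 4.2 of~\cite{trfree5} — but your argument is the standard one for this fact and matches what the cited source does: reinterpret $\omega$ as an additive edge-weight sum, check that the weight of each $4$-face boundary is an even integer of absolute value at most $4$ that is divisible by $3$ (hence zero), and telescope over the quadrangulated annulus, where the induced boundary orientations give $C_1$ and $C_2$ opposite signs and hence the equality (rather than the negation) of the two winding numbers. The one caveat worth recording is that the telescoping step implicitly uses that every non-ring face is an open disk bounded by a closed walk of length $4$, so that each interior edge is traversed exactly once in each direction by the face boundaries; this holds under the paper's standing conventions on embeddings and is the same tacit assumption made in the reference.
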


We also need a strengthening of Lemma~\ref{lemma-cylflow} for quadrangulations of the cylinder with rings of length at most $4$.

\begin{lemma}\label{lemma-col44}
Let $G$ be a tame graph embedded in the cylinder with rings $C_1$ and $C_2$ of length at most $4$.  If all faces of $G$ have length $4$,
the distance between $C_1$ and $C_2$ is at least $|C_1|$, and $\psi$ is a precoloring of $C_1\cup C_2$ that does not extend to a $3$-coloring of $G$,
then $|C_1|=|C_2|=3$ and $\psi$ has opposite winding numbers on $C_1$ and $C_2$, i.e. $\psi$ has winding number $+1$ on one of them
and $-1$ on the other one.
\end{lemma}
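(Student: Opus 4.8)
The plan is to prove Lemma~\ref{lemma-col44} by exploiting the winding number machinery together with the rigidity of quadrangulations. First I would invoke Lemma~\ref{lemma-cylflow}: since all faces of $G$ have length $4$, any $3$-coloring $\varphi$ has equal winding numbers on $C_1$ and $C_2$. The strategy is therefore to assume that $\psi$ does \emph{not} have the forbidden configuration (either $|C_1|<4$ or $|C_2|<4$ fails to be triangles, or $\psi$ has equal rather than opposite winding numbers) and to construct an extension, contradicting the hypothesis that $\psi$ does not extend. Because the winding number on a ring of length $k$ is an integer of the same parity as $k$, a ring of length $4$ admits colorings with winding numbers in $\{-2,0,2\}$, while a triangle admits only $\{-1,1\}$.

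The key reduction is to turn ``extending $\psi$'' into ``prescribing the winding number''. I would argue that a precoloring $\psi$ of $C_1\cup C_2$ extends to a $3$-coloring of the quadrangulation $G$ precisely when the winding numbers forced by $\psi$ on the two rings can be made to agree. For a $4$-ring, the precoloring $\psi$ restricted to that ring determines its winding number unless the coloring is bichromatic, in which case there may be latitude; for a triangle the winding number is fixed by the $3$-coloring (it is $\pm1$ depending on orientation). The main step is then a layer-by-layer extension argument: using that the distance between $C_1$ and $C_2$ is at least $|C_1|$, I would show that the quadrangulation between the rings is ``flexible enough'' to realize any pair of winding numbers of matching value. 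Concretely, one peels off the faces incident to $C_1$, tracking how the winding number can be shifted as one moves inward, and shows that the achievable winding numbers at $C_2$ form a full interval of the correct parity; the distance condition guarantees enough intermediate layers that no parity or range obstruction survives.

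Assembling these pieces: if both rings are $4$-cycles, then $\psi$ restricted to each ring can be assigned a winding number in $\{-2,0,2\}$, and since both ranges overlap (both contain $0$), Lemma~\ref{lemma-cylflow} can be satisfied and $\psi$ extends --- so non-extension forces at least one triangle ring. If one ring is a triangle and the other a $4$-cycle, the triangle forces winding number $\pm1$ (odd) while the $4$-cycle forces an even winding number, so the parities clash and Lemma~\ref{lemma-cylflow} shows \emph{no} coloring restricts to $\psi$; but this contradicts tameness or the distance hypothesis allowing realizability, so this mixed case cannot produce a genuine non-extendable $\psi$ either, leaving $|C_1|=|C_2|=3$. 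Finally, when both rings are triangles, each forces winding number $\pm1$; if the two winding numbers agree (both $+1$ or both $-1$), the flexibility argument lets us interpolate and extend $\psi$, so non-extension forces them to disagree, i.e.\ $\psi$ has opposite winding numbers on $C_1$ and $C_2$.

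The hard part will be making the flexibility argument rigorous --- precisely, showing that when the inter-ring distance is at least $|C_1|$, every pair of same-value (for triangles, same-sign) winding numbers is actually realized by some extension of the given boundary precoloring. This requires more than Lemma~\ref{lemma-cylflow}, which only asserts \emph{equality} of winding numbers for a coloring that already exists; one must produce the coloring. I expect the cleanest route is an inductive peeling of quadrilateral faces adjacent to one ring, where the distance bound ensures the inner ring never collides with the outer structure before the winding number can be freely adjusted, converting the problem into the base case of a single annular band of quadrilaterals whose colorings are completely understood. Handling the bichromatic boundary colorings on a $4$-ring (which do not rigidly fix the winding number) is the most delicate bookkeeping, and is where I would concentrate the detailed case analysis.
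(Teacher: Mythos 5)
Your proposal correctly isolates the easy direction: Lemma~\ref{lemma-cylflow} shows that opposite winding numbers on two triangle rings obstruct extension, and (together with the parity observation) that the mixed case $|C_1|=3$, $|C_2|=4$ cannot arise --- though your handling of that case is off. The right argument is simply that all faces other than the rings have even length, so $|C_1|$ and $|C_2|$ have the same parity and hence $|C_1|=|C_2|$; there is nothing to ``contradict with tameness.'' Also, your winding-number bookkeeping for a $4$-ring is wrong: for a closed walk of length $4$ the quantity $\omega$ lies in $\{-4,-2,0,2,4\}$ and must be divisible by $3$, so the winding number of \emph{every} $3$-coloring of a $4$-cycle is $0$ --- there is no ambiguity to resolve and no range $\{-2,0,2\}$. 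This error is harmless for the final case split but signals that the machinery is not being applied carefully.

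The genuine gap is the sufficiency direction, which is the entire content of the lemma: you assert that ``a precoloring extends precisely when the winding numbers can be made to agree'' and defer the proof to an unexecuted ``layer-by-layer peeling'' argument. Matching winding numbers is only the necessary condition from Lemma~\ref{lemma-cylflow}; producing an extension requires a real construction, and a quadrangulation of the cylinder is not in general organized into concentric quadrilateral layers that can be peeled, so the sketch does not obviously go through. The paper does something quite different in each case. For $|C_1|=|C_2|=4$ it exploits bipartiteness: it takes the distance classes $S_1,S_2,S_3$ from the pair $\{v_2,v_4\}$ of $C_1$, deletes $S_1\cup S_2$, contracts $S_3$ to a single vertex, applies Theorem~\ref{thm-6cyc} to the resulting triangle-free graph to color the $C_2$-side, and then fills the three buffer layers back in by hand --- showing that \emph{every} precoloring extends, with no winding-number discussion at all. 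For $|C_1|=|C_2|=3$ it contracts the neighborhoods of the ring vertices to get non-contractible triangles $K_1,K_2$, invokes Theorem~\ref{thm-onetri} to color the middle part, and uses a color-permutation trick producing two colorings $\varphi_1,\varphi_2$ that disagree everywhere, so that at least one of them is compatible with $\psi$; Lemma~\ref{lemma-cylflow} is used here to show that if $\varphi_1$ fails it fails in the unique way that makes $\varphi_2$ succeed. Both constructions depend essentially on the distance hypothesis to keep the buffer/contracted zones disjoint, and neither resembles an interpolation of winding numbers. Without an actual proof of the extension step, your argument establishes only the (easy) necessity of the winding-number condition, not the lemma.
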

\begin{proof}
Note that $C_1$ and $C_2$ have the same parity, and thus $|C_1|=|C_2|$.  Suppose first that $|C_1|=4$.  In this case, $G$ is bipartite.
Let $C_1=v_1v_2v_3v_4$, with the labels chosen so that $\psi(v_1)=\psi(v_3)$.  For $0\le i\le 3$, let $S_i$ denote the set of vertices
of $G$ at distance exactly $i$ from $\{v_2,v_4\}$.  Let $G'$ be the graph obtained from $G-(S_0\cup S_1\cup S_2)$
by identifying all vertices in $S_3$ to a single vertex $x$.  Note that $G'$ is also bipartite, and in particular it has no loops
and it is triangle-free.  Furthermore, since the distance between $C_1$ and $C_2$ is at least four, $C_2$ is a cycle in $G'$.  By Lemma~\ref{lemma-split},
the graph $G'$ has a $3$-coloring $\varphi'$ that matches $\psi$ on $C_2$.  Then, we can obtain a $3$-coloring of $G$ that extends $\psi$
by coloring every vertex $v\in V(G')\setminus\{x\}$ by the color $\varphi'(v)$, all vertices in $S_3$ by
the color $\varphi'(x)$, all vertices in $S_2$ by a color distinct from $\varphi'(x)$ and $\psi(v_1)$,
all vertices in $S_1$ by the color $\psi(v_1)$, and $v_2$ by $\psi(v_2)$ and $v_4$ by $\psi(v_4)$.

Suppose now that $|C_1|=3$.  Note that the winding number of $\psi$ on each of $C_1$ and $C_2$ is either $+1$ or $-1$.
By Lemma~\ref{lemma-cylflow}, if the winding numbers of $\psi$ on $C_1$ and $C_2$ are opposite, then $\psi$ does not extend to a $3$-coloring of $G$.
Hence, we can assume that the winding numbers of $\psi$ on $C_1$ and $C_2$ are the same. 
For $i\in\{1,2\}$, let $C_i=v_{i,1}v_{i,2}v_{i,3}$, with the labels chosen so that $\psi(v_{i,j})=j$ for $j\in\{1,2,3\}$.
For $j\in\{1,2,3\}$, let $S_{i,j}$ denote the set of vertices of $G$ adjacent to $v_{i,j}$
that do not belong to $V(C_i)$.  Since $G$ is tame and the distance between $C_1$ and $C_2$ is at least three,
these sets are pairwise disjoint.  Let $G'$ be the graph obtained from $G$ by, for $i\in\{1,2\}$ and $j\in\{1,2,3\}$, identifying
all vertices in $S_{i,j}$ to a single vertex $x_{i,j}$, and suppressing the resulting faces of length two.
Note that $K_i=x_{i,1}x_{i,2}x_{i,3}$ is a non-contractible triangle in $G'$.  If $\psi$ extends to a $3$-coloring of $G'$,
then it also extends to a $3$-coloring of $G$, obtained by giving each vertex in $S_{i,j}$ the color of $x_{i,j}$.

Let $G_0$ be the subgraph of $G'$ drawn between $K_1$ and $K_2$.  By Theorem~\ref{thm-onetri},
there exists a $3$-coloring $\varphi_1$ of $G_0$ such that $\varphi_1(x_{1,j})=(j\bmod 3)+1$ for $j\in\{1,2,3\}$.  By permuting the colors in $\varphi_1$,
we obtain a $3$-coloring $\varphi_2$ of $G_0$ such that $\varphi_2(x_{1,j})=((j+1)\bmod 3)+1$ for $j\in\{1,2,3\}$ and $\varphi_1(v)\neq\varphi_2(v)$ for every $v\in V(G_0)$.
Suppose that $\varphi_1\cup \psi$ is not a $3$-coloring of $G'$.  By Lemma~\ref{lemma-cylflow}, the winding numbers of $\varphi_1$ on
$K_1$ and $K_2$ are the same, and thus $\varphi_1(x_{2,j})=\psi(v_{2,j})$ for $j\in\{1,2,3\}$.  Then, it follows that $\psi\cup\varphi_2$ is a $3$-coloring of $G'$.
We conclude that $G'$, and thus also $G$, has a $3$-coloring which extends $\psi$.
\end{proof}

Let $G$ be a near $3,3$-quadrangulation, let $C$ be one of the rings
of $G$ and let $\psi$ be a $3$-coloring of $C$.  Let us discuss several cases:
\begin{itemize}
\item If $C$ shares an edge with a triangle $T$ in $G$ (where possibly $T=C$ if $C$ is a triangle),
then $\psi$ uniquely extends to a $3$-coloring $\varphi$ of $T$.  Let $w$ be the winding number of $\varphi$ on $T$.
In this case, we say that $\psi$ on $C$ \emph{causes winding number $w$}.
\item If $C$ does not share an edge with a triangle and we can label the vertices of $C$ as $v_1v_2v_3v_4$ so that the path $v_1v_2v_3$ is a
part of the boundary of a $5$-face $f$ and $\psi(v_1)\neq \psi(v_3)$, then draw an edge between $v_1$ and $v_3$
in $f$, and let $w$ be the winding number of $\psi$ on the triangle $v_1v_3v_4$.  In this case, we also say that $\psi$ on $C$ \emph{causes winding number $w$}.
\item Otherwise, we say that $\psi$ on $C$ \emph{does not cause fixed winding number}.
\end{itemize}
If $C_1$ and $C_2$ are the rings of $G$ and $\psi$ is their precoloring, we say that $\psi$ is \emph{inconsistent} if
$\psi$ causes winding numbers on both $C_1$ and $C_2$ and these winding numbers are opposite.
Otherwise, $\psi$ is \emph{consistent}.

\begin{lemma}\label{lemma-col-33}
Let $G$ be a tame near $3,3$-quadrangulation embedded in the cylinder with rings $C_1$ and $C_2$.
If a precoloring $\psi$ of $C_1\cup C_2$ extends to a $3$-coloring of $G$, then it is consistent.
Furthermore, if the distance between $C_1$ and $C_2$ is at least $9$, then every consistent precoloring
of $C_1\cup C_2$ extends to a $3$-coloring of $G$.
\end{lemma}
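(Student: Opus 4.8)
The plan is to treat the winding number as the only real obstruction and to push everything back onto the two quadrangulation facts already available, Lemma~\ref{lemma-cylflow} and Lemma~\ref{lemma-col44}. Call a ring \emph{rigid} if its precolouring causes a fixed winding number and \emph{flexible} otherwise. A triangular ring is always rigid, and a ring created by subdividing an edge $ab$ of an underlying triangle $abc$ (with subdivision vertex $m$) is rigid exactly when $\psi(a)\neq\psi(b)$; in that case its caused winding number is the winding number of $\psi$ on $abc$. The basic move is \emph{unsubdivision}: deleting the degree-two vertex $m$ and adding the edge $ab$. This is a proper operation precisely because $\psi(a)\neq\psi(b)$, it turns the unique incident $5$-face back into a quadrilateral, and it replaces the ring by the triangle $abc$, on which $\psi$ realises the caused winding number. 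Note also that a flexible ring can never cause an opposite winding number, so it never makes a precolouring inconsistent.

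For the first statement I may therefore assume both rings are rigid. Unsubdividing both turns $G$ into a genuine $3,3$-quadrangulation $G'$, and any $3$-colouring of $G$ restricts to one of $G'$. Lemma~\ref{lemma-cylflow} applied to $G'$ forces equal winding numbers on the two triangular rings, and these are exactly the caused winding numbers of $C_1$ and $C_2$; hence the caused winding numbers agree and $\psi$ is consistent.

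For the extension statement I split on rigidity. If both rings are rigid, consistency gives a common caused winding number $w$, and unsubdividing both (where necessary) yields a $3,3$-quadrangulation $G'$ whose triangular rings carry winding $w$ and whose rings remain at distance at least $3$. Lemma~\ref{lemma-col44} then produces an extension (its obstruction, opposite winding numbers on two triangular rings, is excluded), which I lift back to $G$ by colouring each reinserted subdivision vertex with the unique remaining colour. The real work is the case where some ring, say $C_1$, is flexible, so that $\psi(a)=\psi(b)$. Here I delete the degree-two vertex $m$; because its two neighbours share a colour, this merges the hole with the incident $5$-face into a single non-contractible $5$-cycle ring $B_1$, leaving a quadrangulation $R$ with rings $B_1$ and $C_2$ (the colour of $m$ is restored trivially at the end). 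Two vertices of $B_1$ — the former interior vertices of the $5$-face — are still uncoloured, and the key point is that their two proper colourings yield the two \emph{opposite} winding numbers on $B_1$. Hence I can choose them so that $B_1$ acquires whichever winding number $C_2$ forces on $R$; by Lemma~\ref{lemma-cylflow} this matching is also necessary for any extension.

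The hard part will be to show that, once the winding numbers on the two boundary cycles of $R$ agree, the chosen precolouring really does extend across the quadrangulated interior. This is a length-$5$ analogue of Lemma~\ref{lemma-col44}, which that lemma does not cover, and the parity asymmetry (one ring of length $5$, possibly the other of length $3$) blocks any attempt to simply shorten $B_1$ to a $(\le\!4)$-cycle without creating a contractible triangle and destroying the quadrangulation. I expect to obtain it by the mechanism that proves Lemma~\ref{lemma-col44}: contract the neighbourhood of each ring vertex to pull the rings inward to non-contractible cycles $K_1$ and $K_2$, colour the region between them using Theorem~\ref{thm-onetri} with the triangle $K_2$ (coming from $C_2$) as the precoloured ring, and then use Lemma~\ref{lemma-cylflow} together with the freedom in the two chosen colours — and a colour permutation, as in the proof of Lemma~\ref{lemma-col44} — to force the induced colouring near $B_1$ to agree with $\psi$ on $a$, $b$, $c$. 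Verifying this alignment for the length-$5$ boundary, where the colouring is no longer rigid up to rotation, is the crux. The distance bound $9$ is exactly what makes the scheme feasible: it leaves room to peel a $5$-face off each end and still keep a quadrangulated core whose two boundaries are at distance at least the larger ring length, as the core colouring argument requires.
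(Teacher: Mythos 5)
Your overall strategy (reduce everything to winding numbers and then invoke Lemma~\ref{lemma-cylflow} and Lemma~\ref{lemma-col44}) is the right one and matches the paper in spirit, but two of your reductions are unsound as stated and the central step is missing. First, your rigid/flexible dichotomy does not match the definition of ``causes winding number'': a ring $C=v_1v_2v_3v_4$ can share an edge (necessarily $v_3v_4$ or $v_4v_1$, since $v_2$ has degree two) with a non-contractible triangle $T$ of $G$, and then $\psi$ on $C$ causes a winding number through $T$ even when $\psi(v_1)=\psi(v_3)$. This configuration is allowed in a tame near $3,3$-quadrangulation ($C$ is not a triangle, so tameness does not exclude $T$), it can make $\psi$ inconsistent, and such a ring cannot be unsubdivided; so your proof of the first statement simply does not cover it, and your flexible-case analysis would misclassify it. The paper handles it by retracting $C$ onto $T$. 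Second, in the ``both rigid'' case the unsubdivided graph $G'$ need not be tame: the restored triangle $v_1v_3v_4$ may meet another non-contractible triangle of $G$, so Lemma~\ref{lemma-col44} does not apply to $G'$ directly. The paper circumvents this by choosing two non-contractible triangles $T_1,T_2$ with a maximal tame quadrangulated region between them, applying Lemma~\ref{lemma-col44} only there, and gluing the outer parts with Theorem~\ref{thm-onetri} and Lemma~\ref{lemma-cylflow}.

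The more serious problem is that the flexible case is not proved: you explicitly defer ``the crux,'' namely extending a coloring across a quadrangulation one of whose boundary components is the $5$-cycle $B_1$ with a prescribed winding number. No length-$5$ analogue of Lemma~\ref{lemma-col44} exists in the paper, and establishing one is essentially as hard as the statement you are proving. The paper avoids the $5$-cycle boundary altogether: after fixing the target winding-$w$ coloring $\varphi$ of the $5$-cycle $v_1u_5u_4v_3v_4$ and relabelling so that $\varphi(v_4)=\varphi(u_5)$, it either retreats to a triangle through $v_1$ (using Theorem~\ref{thm-53} to colour the intermediate region), or deletes $v_1$ and $v_2$ and \emph{identifies all remaining neighbours of $v_1$} to a single vertex $x$ coloured $\psi_0(v_4)$, producing the triangle $xu_4v_3$ with winding number $w$. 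Either way the result is again a $3,3$-quadrangulation, to which the $T_1,T_2$ argument above applies. You would need to carry out such a reduction, or actually prove your length-$5$ extension claim; as written, the second statement is not established.
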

\begin{proof}
Let $G_0$ be a graph and $\psi_0$ a $3$-coloring of its rings obtained from $G$ and $\psi$ as follows.
For $i=1,2$, if $C_i$ shares an edge with a triangle $T$,
then remove all vertices between $C_i$ and $T$ (excluding $T$, but including $V(C_i)\setminus V(T)$),
and let $\psi_0$ restricted to $T$ be the unique $3$-coloring of $T$ that matches $\psi$ on $V(T)\cap V(C_i)$.
If $C_i=v_1v_2v_3v_4$, $v_1v_2v_3$ is a part of the boundary
of a $5$-face $f$, and $\psi(v_1)\neq \psi(v_3)$, then remove $v_2$ and add the edge $v_1v_3$ drawn inside $f$,
and let $\psi_0$ restricted to $v_1v_3v_4$ match $\psi$.  Finally, if $\psi(v_1)=\psi(v_3)$, then do not alter $G$ at $C_i$
and let $\psi_0$ restricted to $C_i$ match $\psi$.
Let $C'_1$ and $C'_2$ be the rings of $G_0$ obtained from $C_1$ and $C_2$, respectively.

Using Theorem~\ref{thm-onetri}, observe that $\psi$ extends to a $3$-coloring of $G$ if and only if $\psi_0$ extends to a $3$-coloring
of $G_0$.  Note that if $\psi$ is inconsistent, then $G_0$ is a $3,3$-quadrangulation and the winding numbers of $\psi_0$
on $C'_1$ and $C'_2$ are opposite, and thus by Lemma~\ref{lemma-cylflow}, $\psi_0$ does not extend to a $3$-coloring of $G_0$.

Therefore, it suffices to consider the case that $\psi$ is consistent and the distance between $C_1$ and $C_2$ in $G$ is at least $9$,
and to show that in this case, $\psi_0$ extends to a $3$-coloring of $G_0$.
If $\psi$ on $C_1$ or $C_2$ causes winding number, then let $w$ be this winding number.  Otherwise, let $w=1$.

We now modify $G_0$ and $\psi_0$ into another auxiliary graph $G_1$ with a precoloring $\psi_1$ of its rings.
For each $i=1,2$ such that $|C'_i|=4$, we modify the graph as follows.  Let $C'_i=v_1v_2v_3v_4$, where $v_1v_2v_3u_4u_5$ is
a $5$-face and $\psi_0(v_1)=\psi_0(v_3)$.  Let $\varphi$ be the unique $3$-coloring of the $5$-cycle $K=v_1u_5u_4v_3v_4$ matching $\psi_0$
on $v_1$, $v_3$, and $v_4$, such that $\varphi$ has winding number $w$ on $K$.  If necessary, exchange the labels of $v_1$ with $v_3$ and of $u_4$ with $u_5$ so that $\varphi(v_4)=\varphi(u_5)$.
If $v_1$ is contained in a triangle $T$, then remove from $G_1$ all vertices between $C'_i$ and $T$ (excluding $T$, but including $V(C'_i)\setminus V(T)$),
and let $\psi_1$ restricted to $T$ be the $3$-coloring with winding number $w$ such that $\psi_0(v_1)=\psi_1(v_1)$
(since $T$ does not share an edge with $C'_i$, $\varphi(v_1)=\varphi(v_3)$, and $\varphi(v_4)=\varphi(u_5)$,
the coloring $\varphi$ extends to a $3$-coloring of the subgraph of $G_0$ drawn between
$K$ and $T$ by Theorem~\ref{thm-53}, and the restriction of this $3$-coloring to $T$ matches $\psi_1$ by Lemma~\ref{lemma-cylflow}).
If $v_1$ is not contained in any triangle, then remove $v_1$ and $v_2$ and identify all remaining neighbors of $v_1$ to a single vertex
$x$, and let $\psi_1$ restricted to $xu_4v_3$ be the $3$-coloring such that $\psi_1(v_3)=\varphi(v_3)$, $\psi_1(u_4)=\varphi(u_4)$
and $\psi_1(x)=\varphi(v_4)=\varphi(u_5)$.
For each $i=1,2$ such that $|C'_i|=3$, we do not modify the graph and we let $\psi_1$ restricted to $C'_i$ match $\psi_0$.

Observe that $G_1$ is a $3,3$-quadrangulation and that if $\psi_1$ extends to a $3$-coloring of $G_1$, then we can obtain a $3$-coloring
of $G_0$ that extends $\psi_0$.  Let $C''_1$ and $C''_2$ be the rings of $G_1$.
Let $T_1$ and $T_2$ be non-contractible triangles in $G_1$
such that the subgraph $G'_1$ drawn between $T_1$ and $T_2$ is tame and $G'_1$ is as large as possible, with labels chosen so that $T_1$ separates
$C''_1$ from $T_2$.  Since $G$ is tame and the distance between $C_1$ and $C_2$ in $G$ is at least $9$, the construction
of $G_0$ and $G_1$ ensures that the distance between $T_1$ and $T_2$ in $G'_1$ is at least three.

By Theorem~\ref{thm-onetri}, $\psi_1$ extends to a $3$-coloring $\varphi_1$ of the subgraphs of $G_1$ drawn between $C''_1$ and $T_1$,
and between $C''_1$ and $T_2$, and by Lemma~\ref{lemma-cylflow}, $\varphi_1$ has winding number $w$ both on $T_1$ and $T_2$.
By Lemma~\ref{lemma-col44}, the restriction of $\varphi_1$ to $T_1\cup T_2$ extends to a $3$-coloring $\varphi_2$ of $G'_1$.
Hence, $\psi_1$ extends to a $3$-coloring $\varphi_1\cup\varphi_2$ of $G_1$.  This also gives a $3$-coloring of $G$ extending $\psi$.
\end{proof}

\section{Basic cylinders}\label{sec-bc}

We now make the first step towards the proof of Theorem~\ref{thm-many}, by studying the coloring properties of chains of graphs.
Let $G$ and $G'$ be graphs embedded in a surface with the same rings, and let $C$ be the union of the rings.
We say that $G$ \emph{dominates} $G'$ if every precoloring of $C$ that extends to a $3$-coloring of $G$ also extends to a $3$-coloring of $G'$.
We aim to show that for any chain $G$ with sufficiently many graphs that are not quadrangulations, either all precolorings of the rings of $G$ extend,
or $G$ is dominated by the reduced Thomas-Walls graph $T'_4$.  To do so, we contract $4$-faces in the graphs of the chain as long as possible,
ending up with easily enumerated and analyzed ``basic'' graphs.

Let us give a few more definitions.
We say that a graph $H$ embedded in the cylinder with rings $K_1$ and $K_2$ is a \emph{quadrangulation} if all its non-ring faces
have length $4$.
We say that $H$ is \emph{quadrangulated} if it contains a quadrangulation with rings $K_1$ and $K_2$ as a subgraph.

Let $H$ be a graph embedded in the cylinder without contractible triangles, and let $T_y=xy_1y_2$ and $T_z=xz_1z_2$ be
triangles in $H$ with the vertices listed in the positive direction around the cylinder.
By \emph{collapsing} $T_y$ and $T_z$, we mean removing the vertices and edges of $H$ that are separated from the rings by $T_y\cup T_z$,
and identifying $y_i$ with $z_i$ for $i=1,2$.  Let $H'$ be obtained from $H$ by collapsing $T_y$ and $T_z$.
By Lemma~\ref{lemma-split} (applied to the subgraph of $H$ drawn between $T_y$ and $T_z$),
every $3$-coloring of $H'$ extends to a $3$-coloring of $H$, and thus
$H'$ dominates $H$.

Let $G$ be a tame chain of graphs $G_1,\ldots,G_n$ with the sequence of cutting cycles $\mathcal{C}=C_0,\ldots,C_n$.
We say that the vertices of the cutting cycles are \emph{special}.
Let $f=x_1x_2x_3x_4$ be a $4$-face in $G_i$ for some $i\in\{1,\ldots, n\}$.
We call the identification of $x_1$ and $x_3$ to a new vertex $x$ \emph{legal} if
\begin{itemize}
\item at most one of $x_1$ and $x_3$ is special, and
\item $G_i$ does not contain (not necessarily distinct) paths $x_1z_1z_2x_3$ and $x_1z'_1z'_2x_3$ such that
$\{x_1,z_1,z_2,x_3\}\cap V(C_{i-1})\neq\emptyset$ and $\{x_1,z'_1,z'_2,x_3\}\cap V(C_i)\neq\emptyset$.
\end{itemize}
If a graph $H$ is obtained from a graph $G$ by a legal identification, then $H$ dominates $G$ since every 3-coloring of $H$ extends to
a $3$-coloring of $G$.
Note that if the indentification of $x_1$ with $x_3$ creates parallel edges, we suppress them, i.e., keep only
one (arbitrary) of them.  Since $G$ is tame, $x_1$ and $x_3$ are not adjacent, and thus the identification does
not create loops.  Also, by the second condition of the legality, we can collapse the triangles possibly created by the identification
while keeping the rings of $G_i$ disjoint.

Let $G$ be a tame graph embedded in the cylinder with rings of length at most $4$.
We say that $G$ is \emph{basic} if either $G$ contains no contractible $4$-cycle, or it is one
of the graphs depicted in Figure~\ref{fig-basic} (the rings of each depicted basic graph are the
cycle bounding its outer face and the cycle bounding its central face of length $3$ or $4$).
We aim to show that it suffices to consider the chains of basic graphs.

\begin{figure}
\begin{center}
\includegraphics[scale=0.9,page=1]{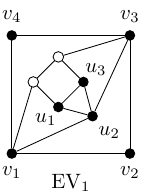}
\includegraphics[scale=0.9,page=2]{fig-basic}
\includegraphics[scale=0.9,page=3]{fig-basic}
\includegraphics[scale=0.9,page=4]{fig-basic}
\includegraphics[scale=0.9,page=5]{fig-basic}
\includegraphics[scale=0.9,page=6]{fig-basic}
\includegraphics[scale=0.9,page=7]{fig-basic}
\includegraphics[scale=0.9,page=8]{fig-basic}
\includegraphics[scale=0.9,page=9]{fig-basic}
\includegraphics[scale=0.9,page=10]{fig-basic}
\includegraphics[scale=0.9,page=11]{fig-basic}
\includegraphics[scale=0.9,page=12]{fig-basic}
\includegraphics[scale=0.9,page=13]{fig-basic}
\includegraphics[scale=0.9,page=14]{fig-basic}
\includegraphics[scale=0.9,page=15]{fig-basic}
\includegraphics[scale=0.9,page=16]{fig-basic}
\includegraphics[scale=0.9,page=17]{fig-basic}
\includegraphics[scale=0.9,page=18]{fig-basic}
\includegraphics[scale=0.9,page=19]{fig-basic}
\includegraphics[scale=0.9,page=20]{fig-basic}
\includegraphics[scale=0.9,page=21]{fig-basic}
\end{center}
\caption{Basic graphs with contractible 4-cycles. The dotted lines indicate how to identify vertices in quadrangulations.}\label{fig-basic}
\end{figure}

We need the following result concerning extension of a precoloring of a $(\le\!9)$-cycle.
\begin{theorem}[Thomassen~\cite{thom-torus}]\label{thm-girth5}
Let $G$ be a graph of girth at least $5$ drawn in the disk with the ring $C$ of length at most $8$.
If $G$ is critical, then $|C|=8$ and $G=C+e$ for an edge $e$ with both ends in $C$.
\end{theorem}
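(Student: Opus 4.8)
The plan is to separate the cheap consequences of criticality and girth from the one genuinely hard point, which is ruling out interior vertices; the combinatorial endgame is then short.

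\emph{Reductions.} Since $G$ is critical, every vertex not on $C$ has degree at least three: this is exactly the first conclusion of Lemma~\ref{lemma-fr}, whose proof uses criticality only (and in particular needs no triangle-freeness beyond what girth~$5$ already gives). Because $G$ has girth at least $5$ it contains no $3$- or $4$-cycle, so $C$ is a cycle of length $k$ with $5\le k\le 8$ (possibly carrying chords), and every bounded face of $G$ is bounded by a cycle of length at least $5$. I would also record the standard fact that a non-extendable precoloring of $C$ must exist: if every precoloring of $C$ extended to $G$, then the defining condition of criticality (a precoloring extending to some proper $G'\supseteq C$ but not to $G$) could never be met, contradicting $G\neq C$.

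\emph{Endgame assuming no interior vertex.} Suppose $V(G)=V(C)$. Then every edge off $C$ is a chord, and a chord $v_iv_j$ together with either arc of $C$ forms a cycle; girth at least $5$ forces both arcs to have length at least $4$. For $k\le 7$ this is impossible, so $G$ has no chord and $G=C$, contradicting $G\neq C$; hence $k=8$. For $k=8$ each chord must split $C$ into two arcs of length exactly $4$, i.e.\ join antipodal vertices, and since $G$ is drawn in the disk two such ``diameters'' would cross. Thus there is at most one chord, while $G\neq C$ forces at least one, giving $G=C+e$ as claimed.

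\emph{The main obstacle: no interior vertices.} It remains to prove $V(G)=V(C)$, and I expect this to be the heart of the matter. A naive Euler/discharging count with $\mu(v)=d(v)-4$ and $\mu(f)=\ell(f)-4$ only yields total charge $-8$, with each bounded face contributing at least $+1$; this alone does not preclude interior vertices, so criticality must be exploited beyond the min-degree bound. I would follow Thomassen's inductive precoloring-extension technique for girth-$5$ disks~\cite{thom-torus}: strengthen the statement to an inductive hypothesis that extends a coloring of a boundary path while assigning lists of size $2$ to the remaining boundary vertices and size $3$ to interior vertices, and then show that any $C$-critical configuration must contain a reducible structure near $C$ (for instance a boundary vertex of degree $2$, or a $5$-face sharing at least two consecutive edges with $C$) whose contraction or deletion preserves the family of non-extendable precolorings. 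Each such configuration contradicts criticality, so none can occur; a discharging argument then shows that a girth-$5$ disk with boundary of length at most $8$ and no reducible configuration can have no interior vertex, the positive charge available at the boundary (the outer face contributes only $k-4\le 4$) being too small to absorb the deficits created by an interior degree-$3$ vertex and its incident faces. The delicate part is choosing the list/reducibility setup and the discharging rules so that the bounded boundary budget provably fails, and handling the interaction with the single possible chord; combining this with the endgame above completes the proof.
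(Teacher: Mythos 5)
This statement is quoted from Thomassen~\cite{thom-torus}; the paper gives no proof of it, so your attempt can only be judged on its own terms. Your ``reductions'' and your endgame under the hypothesis $V(G)=V(C)$ are correct and essentially forced: girth~$5$ makes every chord split $C$ into two arcs of length at least~$4$, which is impossible for $|C|\le 7$ (contradicting $G\neq C$) and for $|C|=8$ forces a single antipodal chord, since two such chords would cross in the disk. That part is fine.

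The genuine gap is exactly where you place it: the claim that a critical girth-$5$ graph in the disk with $|C|\le 8$ has no interior vertices, which is the entire content of the theorem beyond the chord count. What you offer there is a research plan, not an argument. You correctly observe that the naive charge $\sum(d(v)-4)+\sum(\ell(f)-4)=-8$ with the outer face contributing only $|C|-4\le 4$ does not by itself exclude interior vertices, and you then defer to ``Thomassen's inductive precoloring-extension technique,'' an unspecified strengthened induction hypothesis with lists of size $2$ on the boundary and $3$ in the interior, unspecified reducible configurations, and unspecified discharging rules, while explicitly conceding that ``the delicate part is choosing the list/reducibility setup and the discharging rules so that the bounded boundary budget provably fails.'' None of the reducibility claims is verified (e.g.\ why a $5$-face meeting $C$ in two consecutive edges, or a degree-$2$ boundary vertex, can be eliminated without changing the set of non-extendable precolorings), and no concrete discharging is carried out, so the central step remains unproven. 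As written, the proposal establishes the theorem only in the degenerate case $V(G)=V(C)$ and otherwise reduces it to the hard part of Thomassen's own proof without supplying it.
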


Borodin et al.~\cite{bor7c} investigated precolorings of $7$-faces in planar triangle-free graphs, and their result has the following corollary.

\begin{theorem}[Borodin et al.~\cite{bor7c}]\label{thm-7cyc}
Let $G$ be a triangle-free graph embedded in the disk with the ring $C$ of length $7$ tracing its boundary.
If $G$ is critical, then it has exactly one $5$-face $f$, all faces other than $f$ have length $4$,
and the cycle bounding $f$ intersects $C$ in a path of length at least two.
Furthermore, if $\psi$ is a precoloring of $C$ that does not extend to a $3$-coloring of $G$
and $xyz$ is a subpath of $C$ such that $\psi(x)=\psi(z)$, then $y$ is incident with $f$.
\end{theorem}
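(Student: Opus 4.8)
The statement is a corollary of the main result of Borodin et al.~\cite{bor7c} on extending a precoloring of a $7$-face, so the plan is to set up the translation to that setting (the relevant case being $|C|=7$) and then extract the two claims. First I would close the cuff, regarding $G$ as a plane graph in which the hole becomes a single $7$-face $F$ bounded by $C$; a precoloring $\psi$ of $C$ extends to a $3$-coloring of $G$ exactly when it extends as a precoloring of the boundary of $F$, and the present notion of criticality (deleting any edge or vertex off $C$ changes which precolorings of $C$ extend) matches the one used in~\cite{bor7c}. With this dictionary in place, the face structure and the restriction on non-extendable precolorings are precisely what their classification of critical graphs with a precolored $7$-face delivers.

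To obtain the face structure I would first invoke Lemma~\ref{lemma-fr}: every vertex off $C$ has degree at least three, so $G$ has girth $4$ and every inner face has length at least $4$, and moreover every contractible $(\le\!5)$-cycle bounding a triangle-free disk is a face. Euler's formula gives $\sum_{f}\ell(f)=2|E(G)|-7$ summed over the inner faces, which is odd; hence the number of odd inner faces is odd and in particular positive, so the $7$-cycle cannot bound a quadrangulated disk and at least one inner face has length $5$. The classification of~\cite{bor7c} then forces the extremal configuration, a single $5$-face $K$ with all other inner faces of length $4$, since any additional or larger odd face would expose a proper critical subgraph, contradicting criticality. That $K\cap C$ is a path of length at least two is again a consequence of criticality: if $K$ met $C$ in a shorter intersection, an edge of $K$ or a contraction along $C$ could be removed without altering the set of extendable precolorings of $C$.

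For the \emph{furthermore} assertion, suppose $\psi$ does not extend and that $xyz$ is a subpath of $C$ with $\psi(x)=\psi(z)$, and assume for contradiction that $y$ is not incident with $K$. Then all inner faces meeting $y$ have length $4$, and I would identify the non-adjacent, equally coloured vertices $x$ and $z$, suppressing the resulting digon through $y$. This shortens the boundary to a cycle of length $5$ while carrying $\psi$ to a precoloring $\psi'$ of the new ring. Since $G$ is triangle-free and $y$ lies in a purely quadrangulated neighbourhood, the reduced graph $G'$ is again a triangle-free disk graph; because no critical triangle-free graph has a ring of length $5$ by Theorem~\ref{thm-6cyc}, the precoloring $\psi'$ extends to $G'$, and pulling this extension back (recovering a colour for the suppressed vertex $y$) extends $\psi$ to $G$, contradicting the choice of $\psi$. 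Hence $y$ is incident with $K$.

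The main obstacle is converting criticality into the two sharp conclusions. Pinning the face structure down to a single $5$-face rests on the full strength of~\cite{bor7c} rather than on the parity count alone, and in the coloring argument the delicate point is that identifying $x$ with $z$ must not create a triangle: this requires ruling out an interior $x$--$z$ path of length three, which is exactly where the hypothesis that $y$ avoids the $5$-face $K$ and lies in a quadrangulated region has to be used carefully.
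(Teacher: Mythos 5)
The paper never proves this statement: it is quoted as a black box from~\cite{bor7c} (``their result has the following corollary''), with the hypothesis $|C|=7$ left implicit in the surrounding text --- for $|C|\le 6$ the conclusion would contradict Theorem~\ref{thm-6cyc}. Your first two paragraphs ultimately do the same thing: the Euler/parity count only shows that some odd inner face exists, and you concede yourself that pinning the structure down to a unique $5$-face $K$ with $K\cap C$ a path of length at least two ``rests on the full strength of~\cite{bor7c}''. To that extent your proposal coincides with the paper's treatment, which is simply a citation.

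The self-contained argument you offer for the ``furthermore'' clause, however, has a genuine gap, and it is not the one you flag. The triangle-creation issue is the easy part: an interior path $xabz$ of length three avoiding $y$ would close with the subpath $xyz$ of $C$ into a contractible $5$-cycle, which by Lemma~\ref{lemma-fr} bounds a face (the graph has no triangles at all), hence equals the unique $5$-face $K$ and puts $y$ on $K$, contrary to assumption. The real problem is the final step, ``recovering a colour for the suppressed vertex $y$''. After identifying $x$ and $z$ to a vertex $v$, the vertex $y$ becomes an \emph{interior} vertex of the reduced graph $G'$, and Theorem~\ref{thm-6cyc} only guarantees \emph{some} extension of the induced precoloring of the new $5$-ring; that extension assigns $y$ a colour distinct from $\psi(x)=\psi(z)$, but not necessarily the prescribed colour $\psi(y)$. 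If $y$ has neighbours in the interior of the disk --- perfectly possible for a ring vertex of a critical graph, since only vertices off the ring are forced to have degree at least three --- you cannot freely recolour $y$ at the end, so what your reduction actually proves is that $\psi$ restricted to $C-y$ extends (with $x,z$ receiving their common colour), not that $\psi$ itself extends. Closing this gap needs either the finer description in~\cite{bor7c} of exactly which precolorings of the $7$-cycle fail to extend, or a different reduction that keeps the constraint at $y$.
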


We now turn our attention to legal identifications.
Let $G$ be a chain of critical graphs $G_1,\ldots,G_n$ with the sequence of cutting cycles $\mathcal{C}=C_0,\ldots,C_n$.
Let $r(G,\mathcal{C})$ denote the number of the graphs among $G_1$, \ldots, $G_n$ that are not quadrangulations.

\begin{lemma}\label{lemma-legalidentification}
Let $G$ be a tame graph embedded in the cylinder with rings of length at most $4$.
Suppose that $G$ is chain of critical graphs $G_1,\ldots,G_n$ with cutting cycles $\mathcal{C} =C_0,\ldots,C_n$.
If there exists $i\in \{1,\ldots,n\}$ such that $G_i$ contains a $4$-face allowing a legal identification,
then there exists a tame graph $H$ embedded in the cylinder with the same rings,
such that
\begin{itemize}
\item $|V(H)|<|V(G)|$ and $H$ dominates $G$, and
\item $H$ is a chain of at least $n$ critical graphs with cutting cycles $\mathcal{C}'$ satisfying $r(H,\mathcal{C}')\geq r(G,\mathcal{C})$.
\end{itemize}
\end{lemma}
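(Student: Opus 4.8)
The plan is to carry out the prescribed legal identification, repair the damage it does to tameness by collapsing the triangles it creates, and then re-cut and prune the result into a chain of critical pieces. First I would fix the $4$-face $f=x_1x_2x_3x_4$ of $G_i$ on which the legal identification of $x_1$ and $x_3$ is available, and realize it by contracting a simple curve through $f$ joining $x_1$ to $x_3$; call the new vertex $x$ and the resulting graph $H_0$. Since $G$ is tame, $x_1$ and $x_3$ are non-adjacent, so no loops arise, and I suppress the parallel edges $xx_2$ and $xx_4$. Then $H_0$ is embedded in the cylinder, has one fewer vertex than $G$, and dominates $G$ by the observation made when legal identifications were introduced. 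The first legality condition (at most one of $x_1,x_3$ is special) guarantees that every cutting cycle of $\mathcal{C}$ persists in $H_0$: if, say, $x_1\in V(C_{i-1})$, then $x$ plays the role of $x_1$, so $C_{i-1}$ stays a cycle of the same length, while the rings $C_0=C_1$ and $C_n=C_2$ are untouched.

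Next I restore tameness. The triangles of $H_0$ are the triangles of $G$---namely the cutting cycles, which the identification leaves intact---together with possibly some new triangles, each of the form $T=xw_1w_2$ arising from a path $x_1w_1w_2x_3$ of length three in $G_i$; every such triangle passes through $x$. I claim each $T$ is non-contractible. Indeed, a contractible $T$ would, after reversing the contraction, bound a disk in $G$ whose boundary is a $5$-cycle $x_1w_1w_2x_3x_2$ or $x_1w_1w_2x_3x_4$ using two consecutive edges of $f$; as $G$ is tame this disk contains no contractible triangle, so by Lemma~\ref{lemma-fr} it is a face, and since $f$ is also a face the common corner $x_2$ (or $x_4$) has degree two. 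By Lemma~\ref{lemma-fr} this corner is then special, and as its only neighbors are $x_1$ and $x_3$, both $x_1$ and $x_3$ lie on its cutting cycle, contradicting the first legality condition. Now by the second legality condition it is impossible that one length-three $x_1$--$x_3$ path meets $C_{i-1}$ while another meets $C_i$; exchanging the two rings if necessary, I may assume every new triangle is disjoint from $C_i$. These triangles are thus pairwise non-contractible and avoid $C_i$, so they can be collapsed in pairs: each collapse deletes a disk trapped between two of them, a disk disjoint from both rings, so $C_{i-1}$ and $C_i$ stay disjoint, and by Theorem~\ref{thm-6cyc} each collapse yields a dominating graph. After finitely many collapses I obtain a tame graph $H_1$ dominating $G$ in which at most one new triangle $T$ remains.

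It then remains to produce the cutting cycles, check the count, and enforce criticality. If no new triangle survives I keep $\mathcal{C}$ (renaming $x_1$ to $x$ where needed); if $T$ survives, it is non-contractible and disjoint from $C_i$, hence separates $C_{i-1}$ from $C_i$ and may be inserted into the list of cutting cycles, splitting the region formerly occupied by $G_i$ in two. Either way $H_1$ is a chain on at least $n$ pieces. Any face of length other than four witnessing that $G_i$ is not a quadrangulation survives the identification and the collapses and falls into one of the resulting pieces, so that piece is again not a quadrangulation; since none of these operations shortens a face, none can turn a non-quadrangulation into a quadrangulation, and therefore $r$ does not decrease. Finally, whenever a piece of $H_1$ fails to be critical it contains an edge or vertex off its two bounding cutting cycles whose deletion leaves unchanged the set of precolorings of those cycles that extend; I delete all such edges and vertices. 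These deletions never touch a cutting cycle, create no triangle, and preserve non-contractibility, so tameness and the chain structure survive; they preserve the extension relation of each piece and hence keep the whole graph dominating $G$; and since deleting edges or vertices can only lengthen faces, they keep every non-quadrangulation a non-quadrangulation. The graph $H$ obtained after this pruning has strictly fewer vertices than $G$ and meets every conclusion of the lemma.

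The substantive difficulty lies in the tameness repair: proving that the triangles created by the identification are non-contractible (the case analysis above, pinned down by the first legality condition) and that the second legality condition genuinely lets all of them be collapsed into a single cutting cycle while keeping $C_{i-1}$ and $C_i$ disjoint. Once $H_1$ is seen to be a tame chain, the remaining assertions---domination, the strict decrease in the number of vertices, and $r$ not decreasing---follow routinely from the facts that legal identification and collapsing yield dominating graphs and that pruning to critical pieces preserves both the extension relations and the property of having a non-quadrilateral face.
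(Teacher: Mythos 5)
Your overall skeleton --- perform the identification, collapse the newly created non-contractible triangles down to at most one, re-cut the chain, and prune each piece to a maximal critical subgraph --- is the same as the paper's, and your argument that the new triangles are non-contractible (via Lemma~\ref{lemma-fr} forcing a degree-two corner of the $5$-face to be special, whence both $x_1$ and $x_3$ would be special) is essentially the paper's argument verbatim. The gap is in the one step that carries almost all of the difficulty: the inequality $r(H,\mathcal{C}')\ge r(G,\mathcal{C})$. You justify it by asserting that any face of length other than four ``survives the identification and the collapses.'' That is false. Collapsing two intersecting non-contractible triangles deletes \emph{everything} drawn in the disk they cut off, so a $(\ge\!5)$-face of $G_i$ trapped between two of the new triangles $xy_1y_2$ and $xy'_1y'_2$ (equivalently, inside the closed walk $x_1y_1y_2x_3y'_2y'_1$ in $G_i$) is destroyed, and a non-quadrangulation $G_i$ can a priori be replaced by pieces that are all quadrangulations. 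Ruling this out is the bulk of the paper's proof, and it cannot be done for an arbitrary legal identification: the hypothesis only supplies \emph{some} $4$-face admitting one, and the paper must first argue that the face $f$ can be \emph{chosen} suitably --- disposing of the easy configurations and establishing the structural claims that every $4$-face of $G_i$ has a pair of opposite special vertices, one on $C_{i-1}$ and one on $C_i$ --- and then, in the remaining cases, show either that a surviving piece keeps a long face (e.g.\ because some $y_1$ ends up with degree two, which together with tameness forces a $(\ge\!5)$-face), or that having all long faces trapped between two $x_1$--$x_3$ paths of length three contradicts those structural claims. None of this appears in your write-up, and without it the lemma's key quantitative conclusion is unproved.

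A secondary, repairable omission: the second legality condition only forbids $x_1$--$x_3$ paths of length three meeting $C_{i-1}$ \emph{and} others meeting $C_i$; it does not prevent the surviving triangle $T$ from intersecting $C_{i-1}$. In that case you cannot simply insert $T$ into the list of cutting cycles, since the cycles of a chain must be pairwise vertex-disjoint except at the two ends; the paper instead removes the offending internal $4$-cycle from $\mathcal{C}'$, which is what keeps the count at $n'\ge n$.
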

\begin{proof}
Let $f=x_1x_2x_3x_4$ be a $4$-face allowing a legal identification of $x_1$ and $x_3$ in $G_i$.
Let a chain $H$ with cutting cycles $\mathcal{C'}$ be obtained from $G$ as follows.
First, identify $x_1$ with $x_3$ to a new vertex $x$, obtaining a graph $H'$.
Collapse all intersecting non-contractible triangles in $H'$ to a single triangle, obtaining a graph $H''$.
Note that $H''$ contains at most one triangle $T$ not belonging to $\mathcal{C}$.
Furthermore, by the legality of the identification, the cycles $C_{i-1}$ and $C_i$ in $H''$ are vertex-disjoint and
at most one of them (of length $4$) intersects $T$.
If $T$ does not exist, then let $\mathcal{C}'=\mathcal{C}$.  Otherwise,
let $\mathcal{C'}=C'_0,\ldots,C'_{n'}$ be the sequence of cutting cycles in $H''$ obtained from $\mathcal{C}$
by adding $T$ and removing the $4$-cycle distinct from $C_0$ and $C_n$ that intersects $T$, if any.
For $j=1,\ldots, n'$, let $H''_j$ denote the subgraph of $H''$ drawn between $C'_{j-1}$ and $C'_j$.
Finally, replace $H''_j$ by its maximal critical subgraph $H_j$ for $j=1,\ldots, n'$, obtaining the graph $H$.

Suppose for a contradiction that $H'$ contains a contractible triangle $xz_1z_2$.  Then
$x_1x_2x_3z_1z_2$ is a contractible $5$-cycle in $G_i$, and by Lemma~\ref{lemma-fr}, this $5$-cycle bounds
a face of $G_i$.  Hence, $x_2$ has degree two in $G_i$, and by Lemma~\ref{lemma-fr}, $x_2$ is special.
But then also both neighbors of $x_2$ would be special, which contradicts the assumption that the identification of $x_1$
and $x_3$ is legal.  Therefore, $H'$ contains no contractible triangles, and consequently, $H''$ and $H$ are tame.
Clearly, $H$ dominates $G$ and $|V(H)|<|V(G)|$.
Note that $\mathcal{C}'$ contains all the triangles of $H$, and that $|\mathcal{C}'|\ge |\mathcal{C}|$.
Hence, if $r(H,\mathcal{C}')\ge r(G,\mathcal{C})$, then $H$ satisfies all the conditions of Lemma~\ref{lemma-legalidentification}.
Let us argue that we can choose the face $f$ and its labelling so that the identification of $x_1$ and $x_3$ is legal and
$r(H,\mathcal{C}')\ge r(G,\mathcal{C})$.  We distinguish several cases.

Suppose first that $f=x_1x_2x_3x_4$ is a face of $G_i$ such that at most one of $x_1$ and $x_3$ is special and
$G_i$ contains no path $x_1z_1z_2x_3$ of length three disjoint with $\{x_2,x_4\}$.  Note that this implies that the identification
of $x_1$ with $x_3$ is legal.  Consider the procedure described at the beginning of the proof.  In this case $H''=H'$ and $\mathcal{C}'=\mathcal{C}$,
and $H$ is obtained from $G$ by replacing $G_i$
with $H_i$.  Suppose that $H_i$ is a quadrangulation.  For every $4$-face $h$ of $H_i$, either $h$ corresponds to a $4$-face of $G_i$,
or $h$ corresponds to a contractible $6$-cycle $K$ in $G_i$ containing the path $x_1x_2x_3$.  In the latter case,
note that $K$ does not bound a face, since then $x_2$ would have degree two and Lemma~\ref{lemma-fr} would imply that
$x_1$, $x_2$ and $x_3$ are special vertices, contrary to the assumption that the identification of $x_1$ and $x_3$ is legal.
Hence, Lemma~\ref{lemma-fr} implies that all faces of $G_i$ inside $K$ have length $4$.  We conclude that if $H_i$ is a
quadrangulation, then $G_i$ is a quadrangulation as well, and thus $r(H,\mathcal{C}')\ge r(G,\mathcal{C})$.

Let us remark that if the identification of $x_1$ with $x_3$ does not create a new triangle, then the legality only
requires that not both $x_1$ and $x_3$ are special.  Therefore, it remains to consider the case that the following holds.
\claim{cl-alltriang}{For every $4$-face $y_1y_2y_3y_4$ in $G_i$ such that not both $y_1$ and $y_3$ are special,
there exists a path in $G_i$ of length three joining $y_1$ with $y_3$ and disjoint with $\{y_2,y_4\}$.}

Next, suppose that $f=x_1x_2x_3x_4$ is a face in $G_i$ such that $x_3$ and $x_4$ are not special.
By (\ref{cl-alltriang}), $G$ contains paths of length three between $x_1$ and $x_3$, and between $x_2$ and $x_4$,
and the two paths must intersect, forming a triangle.  Since all triangles are contained in $\mathcal{C}$,
this triangle is equal to say $C_i$ and contains $x_1$ and $x_2$.  Hence, $G$ contains paths $x_1zy_1x_3$ and $x_2zy_2x_4$,
where $C_i=x_1x_2z$.  At least one of the $4$-cycles $zx_2x_3y_1$ and $zx_1x_4y_2$ (say the former) is contractible.
By Lemma~\ref{lemma-fr}, $zx_2x_3y_1$ is a face.  Note that since $G$ is tame and $x_3$ and $x_4$ are not
special, $y_1$ is not adjacent to $y_2$ or $x_4$.  Consequently, $G$ contains no path of length three between $y_1$ and $x_2$
disjoint with $\{z,x_3\}$, and by (\ref{cl-alltriang}), it follows that $y_1\in V(C_{i-1})$.  Therefore, the $4$-cycle
$zx_1x_4y_2$ is also contractible, and by symmetry, $y_2\in V(C_{i-1})$.  However, then $G_i$ is isomorphic to the graph $\text{EV}_1$ (if $y_1\neq y_2$)
or $Q_5$ (if $y_1=y_2$) from Figure~\ref{fig-basic}, and contains no $4$-face allowing a legal identification.
This contradicts the assumptions of Lemma~\ref{lemma-legalidentification}.

By symmetry, it follows that every edge of a $4$-face is incident with a special vertex.  In particular, the following holds.
\claim{cl-onlyone}{For every $4$-face $y_1y_2y_3y_4$ in $G_i$, there exists $k\in\{1,2\}$ such that both $y_k$ and $y_{k+2}$
are special.}

Consider now a $4$-face $f=x_1x_2x_3x_4$ in $G_i$ such that $x_1$ is not special.  By (\ref{cl-onlyone}),
$x_2$ and $x_4$ are special.  If $x_2,x_4\in V(C_j)$ for some $j\in\{i-1,i\}$, then $x_1$ and $x_3$ are in different components of
$G_i-\{x_2,x_4\}$, which contradicts (\ref{cl-alltriang}).  Hence, we have a strengthening of (\ref{cl-onlyone}):
\claim{cl-onlyones}{For every $4$-face $y_1y_2y_3y_4$ in $G_i$, there exists $k\in\{1,2\}$ such that one of $y_k$ and $y_{k+2}$
belongs to $C_{i-1}$ and the other one to $C_i$.}

Let us now consider any $4$-face $f=x_1x_2x_3x_4$ in $G_i$ such that the identification of $x_1$ with $x_3$ is legal.
By symmetry, we can assume that $x_1$ is not special, and by (\ref{cl-onlyones}), we can assume that $x_2\in V(C_{i-1})$ and $x_4\in V(C_i)$.
Since $x_1$ is not contained in a triangle, (\ref{cl-onlyones}) implies that $x_2$ and $x_4$ are the only
special neighbors of $x_1$.

Let us now distinguish two subcases depending on whether $x_3$ is special or not.
Let us first consider the subcase that $x_3$ is not special.  By symmetry, $x_2$ and $x_4$ are the only special neighbors of $x_3$.
In this case, we claim that the graph $H$ constructed at the beginning of the proof satisfies $r(H,\mathcal{C}')\ge r(G,\mathcal{C})$,
and thus Lemma~\ref{lemma-legalidentification} holds.  By (\ref{cl-alltriang}), the identification creates a new triangle
$T$.  Since $x_1$ and $x_3$ have no special neighbors other than $x_2$ and $x_4$,
even after the collapse of the triangles during the construction of $H''$, $T$ is vertex-disjoint with $C_{i-1}$ and $C_i$.
Hence, $H$ is obtained from $G$ by replacing $G_i$ with two subgraphs $H_j$ and $H_{j+1}$ for some $j\in\{1,\ldots, n'-1\}$
such that $H_j$ and $H_{j+1}$ intersect in $T$.  Clearly, $r(H,\mathcal{C}')\ge r(G,\mathcal{C})$, unless both $H_j$ and $H_{j+1}$
are quadrangulations and $G_i$ is not a quadrangulation.  As we argued before, every $4$-face of $H_j\cup H_{j+1}$ corresponds to either a
$4$-face in $G$, or to a contractible walk of length $6$ bounding a quadrangulated disk in $G$.  Hence, all the $(\ge\!5)$-faces of $G$
are destroyed by collapsing the triangles, that is, $G_i$ contains two paths $x_1y_1y_2x_3$ and $x_1y'_1y'_2x_3$ such that the open disk $\Lambda$ bounded by
the closed walk $x_1y_1y_2x_3y'_2y'_1$ contains a face of $G_i$ of length greater than $4$, and all faces of $G_i$ not contained
in $\Lambda$ have length $4$.  But then the edge $y_1y_2$ is contained in a $4$-face in $G_i$.  Since $y_1$ and $y_2$ are neighbors of $x_1$ and $x_3$,
respectively, they are not special.  This contradicts (\ref{cl-onlyone}).

Finally, we consider the subcase that $x_3$ is special.
By symmetry, we can assume that $x_3\in V(C_i)$.  If $C_i$ is a triangle, then since the identification of $x_1$ and $x_3$ is legal,
we conclude that no path $x_1y_1y_2x_3$ intersects $C_{i-1}$, and by the same argument as in the previous paragraph,
we show that $r(H,\mathcal{C}')\ge r(G,\mathcal{C})$.  Therefore, suppose that $|C_i|=4$.

By (\ref{cl-alltriang}), $G_i$ contains a path $x_1y_1y_2x_3$ disjoint from $\{x_2,x_4\}$.
Since the identification of $x_1$ with $x_3$ is legal, we have $y_2\not\in V(C_{i-1})$.
If $y_2\not\in V(C_i)$, then let $W$ be the contractible closed walk of length $7$
consisting of $x_4x_1y_1y_2x_3$ and the path of length three between $x_3$ and $x_4$ in $C_i$.
If $y_2\in V(C_i)$, then let $W$ be the contractible closed walk of length $5$
consisting of $x_4x_1y_1y_2$ and a path of length two between $y_2$ and $x_4$ in $C_i$.
In both cases, let $\Lambda$ be the open
disk bounded by $W$.  Since $y_2\not\in V(C_{i-1})$, (\ref{cl-onlyones}) implies that $\Lambda$ does not contain any $4$-face of $G_i$,
and by Theorem~\ref{thm-girth5}, we conclude that $\Lambda$ is a face.  Since this holds for every path of length three between $x_1$ and $x_3$,
it follows that $x_1y_1y_2x_3$ is the only such path, and thus $H'$ contains only one new triangle $T=xy_1y_2$.
Let $H_j$ be the subgraph of $H$ between $C_{i-1}$ and $T$.  Note that $H_j$ is not a quadrangulation, since otherwise
$G_i$ would contain a $4$-cycle incident with $x_1$ and $y_1$, contrary to (\ref{cl-onlyone}).
Consider the other subgraph $H_{j+1}$ in the chain that contains $T$.  Since $\Lambda$ is a face, it follows that
$y_1$ has degree two in $H_{j+1}$, and since $H$ is tame, the face of $H_{j+1}$ incident with $y_1$ has length greater than $4$
and $H_{j+1}$ is not a quadrangulation.  Therefore, $r(H,\mathcal{C}')\ge r(G,\mathcal{C})$ as required.
\end{proof}

Now, let us show that legal identification is always possible in a non-basic graph.

\begin{lemma}\label{lemma-4face}
Let $G$ be a tame critical graph embedded in the cylinder with rings $C_1$ and $C_2$ of length at most $4$,
such that every triangle of $G$ is equal to one of the rings.
Assume furthermore that the rings of $G$ are either vertex-disjoint, or one of them has length $3$ and the other
one length $4$.  If $G$ has no $4$-face admiting a legal identification (with respect to the sequence $C_1,C_2$ of cutting cycles),
then $G$ is basic.
\end{lemma}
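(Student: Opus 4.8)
We have a tame critical graph $G$ in the cylinder, all triangles being rings, rings either disjoint or one a triangle and one a $4$-cycle, with no $4$-face admitting a legal identification; we must show $G$ is basic, i.e.\ either $G$ has no contractible $4$-cycle or $G$ is one of the finitely many graphs in Figure~\ref{fig-basic}. Let me think about the structure of this proof.

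Let me reconstruct what "basic" requires. If $G$ has no contractible $4$-cycle we are immediately done. So assume $G$ has a contractible $4$-cycle; then by Lemma~\ref{lemma-fr} every contractible $4$-cycle bounds a face. So $G$ has at least one $4$-face. The hypothesis says no $4$-face admits a legal identification. I need to translate "no legal identification at any $4$-face" into strong structural constraints, then enumerate.

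Legality at $f = x_1x_2x_3x_4$ (identifying $x_1,x_3$) fails iff either both $x_1,x_3$ are special, OR there exist paths $x_1z_1z_2x_3$ and $x_1z_1'z_2'x_3$ with the first meeting $V(C_1)$ (wait, here rings are $C_1,C_2$ as cutting cycles — the chain has $n=1$, so $C_{i-1}=C_1$, $C_i=C_2$). So the second failure condition says: there are length-3 paths from $x_1$ to $x_3$, one touching $C_1$ and one touching $C_2$. This is exactly the kind of "blocking configuration" that forces many triangles/edges.

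For each $4$-face and each of its two diagonals, legality must fail. So at a $4$-face $x_1x_2x_3x_4$: identifying $x_1,x_3$ fails AND identifying $x_2,x_4$ fails. Each failure is one of the two types. This gives a finite branching of cases. This is essentially the same casework that appears inside the proof of Lemma~\ref{lemma-legalidentification} (claims \eqref{cl-alltriang}, \eqref{cl-onlyone}, \eqref{cl-onlyones}).

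Let me think about the approach.

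**The plan.** First I would dispose of the trivial case: if $G$ has no contractible $4$-cycle, it is basic by definition, so assume $G$ has a contractible $4$-cycle, which by Lemma~\ref{lemma-fr} bounds a $4$-face. I would then exploit the failure of legality at every $4$-face. Since $G$ here is a single-link chain ($C_{i-1}=C_1$, $C_i=C_2$ are the only cutting cycles and the only triangles), I can reuse the reasoning inside Lemma~\ref{lemma-legalidentification}: for every $4$-face $y_1y_2y_3y_4$, since neither diagonal identification is legal, I obtain the analogues of \eqref{cl-alltriang}--\eqref{cl-onlyones}. Concretely, the failure of the path-type legality gives, for each $4$-face and each diagonal whose endpoints are not both special, a length-$3$ path joining those endpoints; tracing these paths and using that all triangles are rings forces the special neighbors of any nonspecial vertex to split between $C_1$ and $C_2$, exactly as in \eqref{cl-onlyones}. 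The key structural consequence is that every $4$-face has two opposite vertices, one on $C_1$ and one on $C_2$, so every $4$-face "spans" the cylinder.

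**Bounding the size and enumerating.** Once every $4$-face spans from $C_1$ to $C_2$, the distance between the rings is bounded (at most $2$, since a spanning $4$-face gives a path of length $2$ between the rings through its nonspecial vertices), so $G$ has bounded "width." I would then argue that the vertices off the rings are few: a nonspecial vertex $x_1$ lies on a $4$-face with its two ring-neighbors on $C_1$ and $C_2$ respectively, and the failure of legality at the other diagonal of that face (together with tameness, criticality via Lemma~\ref{lemma-fr}, and the girth/$5$-cycle facts of Theorems~\ref{thm-girth5}, \ref{thm-6cyc}) pins down the local structure around $x_1$. Because the rings have length at most $4$, there are only boundedly many ways to attach such spanning $4$-faces and the possible extra edges/triangles, so $G$ falls into a finite list — precisely the graphs drawn in Figure~\ref{fig-basic}. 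I would organize the enumeration by the ring lengths: $(|C_1|,|C_2|) \in \{(3,3),(3,4),(4,4)\}$, and within each by whether the rings share vertices, matching each surviving configuration to a figure (the $\mathrm{EV}_1$ and $Q_5$ graphs already appearing in the previous proof are the prototypes).

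**The main obstacle.** The hard part is the finite but intricate case analysis verifying that the legality-failure constraints leave only the graphs in Figure~\ref{fig-basic}, rather than the logical skeleton, which is straightforward. In particular, I expect the delicate step to be handling $4$-faces incident with a length-$4$ ring where the "blocking" length-$3$ paths can partly run inside the ring itself (the subcase $|C_i|=4$, $y_2\in V(C_i)$ in Lemma~\ref{lemma-legalidentification}), since there the contractible closed walks of length $5$ or $7$ must be analyzed via Theorems~\ref{thm-girth5} and~\ref{thm-6cyc} to rule out or identify extra faces. Keeping track of which contractible $(\le 5)$-cycles must bound faces (Lemma~\ref{lemma-fr}) throughout this enumeration, and ensuring no spanning $4$-face is overlooked, is where the real care is required; the payoff is that the resulting graphs are few enough to be listed explicitly in the figure.
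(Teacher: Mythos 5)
Your outline follows the same strategy as the paper's proof: derive from the failure of legality the claims that every $4$-face has a length-$3$ path joining each non-special diagonal pair, hence that every $4$-face has one diagonal vertex on $C_1$ and one on $C_2$, and then enumerate. Up to that point you have correctly reconstructed the paper's claims (\ref{cl-allp})--(\ref{cl-oones}) and the $\text{Q}_5$/$\text{EV}_1$ configurations that arise along the way. However, there is a genuine gap in how you get from there to a finite list. Your argument for finiteness is that spanning $4$-faces bound the distance between the rings, so $G$ has ``bounded width'' and the configurations can be enumerated; but bounded width alone does not pin down the graph, and this is not how the paper proceeds. The missing step is the further claim (the paper's (\ref{cl-all3})) that every $4$-face meets $V(C_1\cup C_2)$ in at least \emph{three} vertices: if both $x_1$ and $x_3$ were non-special, then by the earlier analysis $x_2$ and $x_4$ would be the only ring neighbours of each, so no length-$3$ path between $x_1$ and $x_3$ could touch either ring and the identification would be legal after all --- a contradiction. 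This is what reduces the problem to $4$-faces with exactly one vertex off the rings (which a separate argument via Theorems~\ref{thm-6cyc} and~\ref{thm-7cyc} shows give only $\text{EV}_2$) and then to $4$-faces lying entirely on $V(C_1\cup C_2)$, where $|C_1|+|C_2|\le 8$ and Theorem~\ref{thm-girth5} applied to the complementary face makes the enumeration genuinely finite. You gesture at ``pinning down the local structure around $x_1$'' but never state or use this reduction, and you explicitly defer the entire case analysis, which is the substance of the lemma; as written the proposal is a plan for a proof rather than a proof.
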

\begin{proof}
Since no $4$-face of $G$ admits a legal identification,
\claim{cl-allp}{for any $4$-face $x_1x_2x_3x_4$ of $G$ such that $x_1\not\in V(C_1\cup C_2)$,
there exists a path of length three between $x_1$ and $x_3$ disjoint with $\{x_2,x_4\}$.}

Suppose that $G$ contains a $4$-face $x_1x_2x_3x_4$ such that $x_3,x_4\not\in V(C_1\cup C_2)$.
By (\ref{cl-allp}), $G$ contains paths of length three between $x_1$ and $x_3$, and between $x_2$ and $x_4$,
and the two paths must intersect, forming a triangle.  This triangle is equal to say $C_1$ and contains $x_1$ and $x_2$.
Hence, $G$ contains paths $x_1zy_1x_3$ and $x_2zy_2x_4$, where $C_1=x_1x_2z$.  At least one of the $4$-cycles $zx_2x_3y_1$ and $zx_1x_4y_2$ (say the former) is contractible.
By Lemma~\ref{lemma-fr}, $zx_2x_3y_1$ is a face.  Note that since $G$ is tame and $x_3,x_4\not\in V(C_1\cup C_2)$,
$y_1$ is not adjacent to $y_2$ or $x_4$.  Consequently, $G$ contains no path of length three between $y_1$ and $x_2$
disjoint with $\{z,x_3\}$, and by (\ref{cl-allp}), it follows that $y_1\in V(C_2)$.  Therefore, the $4$-cycle
$zx_1x_4y_2$ is also contractible, bounds a face, and by symmetry, $y_2\in V(C_2)$.  However,
then $G$ is isomorphic to the basic graph $\text{Q}_5$ or $\text{EV}_1$.

Hence, we can assume that each edge of a $4$-face of $G$ is incident with a vertex of $C_1\cup C_2$, or
equivalently
\claim{cl-oone}{for every $4$-face $y_1y_2y_3y_4$ in $G$, there exists $k\in\{1,2\}$ such that both $y_k$ and $y_{k+2}$
belong to $V(C_1\cup C_2)$.}
Suppose that $x_1x_2x_3x_4$ is a $4$-face in $G$ such that $x_1\not\in V(C_1\cup C_2)$.  By (\ref{cl-oone}),
$x_2,x_4\in V(C_1\cup C_2)$.  If $x_2,x_4\in V(C_j)$ for some $j\in\{1,2\}$, then $x_1$ and $x_3$ are in different components of
$G-\{x_2,x_4\}$, which contradicts (\ref{cl-allp}).  Hence, we have a strengthening of (\ref{cl-oone}):
\claim{cl-oones}{For every $4$-face $y_1y_2y_3y_4$ in $G_i$, there exists $k\in\{1,2\}$ such that one of $y_k$ and $y_{k+2}$
belongs to $C_1$ and the other one to $C_2$.}

Suppose that $x_1x_2x_3x_4$ is a $4$-face such that $x_1\not\in V(C_1\cup C_2)$.
By symmetry, we can assume that $x_2\in V(C_1)$ and $x_4\in V(C_2)$.
Since $x_1$ is not contained in a triangle, (\ref{cl-oones}) implies that $x_2$ and $x_4$ are the only
neighbors of $x_1$ in $V(C_1\cup C_2)$.

If $x_3$ is not special, then by symmetry, $x_2$ and $x_4$ are the only neighbors of $x_3$ in $V(C_1\cup C_2)$.
However, then no path $x_1z_1z_2x_3$ intersects $C_1$ or $C_2$,
and thus the identification of $x_1$ and $x_3$ is legal, which is a contradiction.

Hence,
\claim{cl-all3}{every $4$-face of $G$ intersects $C_1\cup C_2$ in at least three vertices.}
Suppose that $C_1$ and $C_2$ are not disjoint, and thus say $|C_1|=3$ and $|C_2|=4$.  If $C_1$ and $C_2$ share an edge,
then Lemma~\ref{lemma-fr} implies that $G=C_1\cup C_2$ has no $4$-face, and thus it is basic.  Let us consider the
case that $C_1$ and $C_2$ share only one vertex.  If $G$ contains an edge $e\not\in E(C_1\cup C_2)$ with both ends in $V(C_1\cup C_2)$,
then since $G$ is tame, we conclude that $G$ is the basic graph $J_1$.  If $G$ contains no such edge and contains a $4$-face,
then by (\ref{cl-all3}), such a $4$-face shares two edges with $C_1\cup C_2$.  However, then Theorem~\ref{thm-7cyc}
gives a contradiction with (\ref{cl-oones}) or (\ref{cl-all3}).

Therefore, we can assume that $C_1$ and $C_2$ are vertex-disjoint.
Suppose that $x_1x_2x_3x_4$ is a $4$-face such that $x_1\not\in V(C_1\cup C_2)$.
By (\ref{cl-oones}) and (\ref{cl-all3}), we can assume that $x_2\in V(C_1)$ and $x_3,x_4\in V(C_2)$.  
Recall that $x_2$ and $x_4$ are the only neighbors of $x_1$ in $V(C_1\cup C_2)$.
Since the identification of $x_1$ and $x_3$ is not legal, there exists a path
$x_1y_1y_2x_3$ disjoint with $\{x_2,x_4\}$ such that $y_2\in V(C_1)$.  Since $x_2\in C_1$, $x_3\in V(C_2)$, $C_1$ and $C_2$ are vertex-disjoint,
and $G$ contains no non-ring triangles, it follows that $x_2x_3$ is not contained in a triangle.  It follows that
$y_2$ is not adjacent to $x_2$, and thus $|C_1|=4$.  Let $C_1=x_2z_1y_2z_2$, with the labels chosen so that
$x_2x_3y_2z_1$ is a contractible $4$-cycle.  By Lemma~\ref{lemma-fr}, $x_2x_3y_2z_1$ and $x_2z_2y_2y_1x_1$ are
faces.  Also by Lemma~\ref{lemma-fr}, $y_1$ has degree at least three.  By (\ref{cl-all3}), the edge
$x_1y_1$ is not incident with a $4$-face, and thus by Theorems~\ref{thm-6cyc} and \ref{thm-7cyc} applied to the cycle consisting
of $x_4x_1y_1y_2x_3$ and the path of length $|C_2|-1$ in $C_2$ between $x_3$ and $x_4$ implies that $|C_2|=4$ and the edge
$y_1y_2$ is incident with a $4$-face.  By (\ref{cl-all3}), $y_1$ has a neighbor in $C_2$,
and thus $G$ is isomorphic to the basic graph $\text{EV}_2$.

It follows that every $4$-face in $G$ has all vertices contained in $V(C_1\cup C_2)$.  
Note that since $G$ is tame, $C_1$ and $C_2$ are induced cycles.
If $G$ has no $4$-face, then by Lemma~\ref{lemma-fr} it contains no contractible $4$-cycle, and thus it is basic.
Hence, it suffices to consider the case that $G$ contains a $4$-face $f=x_1x_2x_3x_4$.  
By symmetry, we can assume that $x_1,x_2\in V(C_1)$
and $x_4\in V(C_2)$.  Let $h$ be the face of $C_1\cup C_2\cup \{x_2x_3,x_3x_4, x_4x_1\}$ distinct from $f$ and the rings.
Note that $|h|=|C_1|+|C_2|\le 8$.  If $G$ has no face of length $4$ other than $f$, then by Theorem~\ref{thm-girth5} applied to the
subgraph of $h$ drawn in the closure of $h$, we conclude that either $h$ is a face of $G$ and $G$ is one of the basic graphs
$\text{Tr}'_1$, $\text{Tr}'_2$, $\text{Xq}_5$, $\text{Xq}_6$, or $\text{Xq}_7$; or $|C_1|=|C_2|=4$ and one edge of $G$ is drawn in $h$,
and $G$ is a basic graph $\text{S}_1$ or $\text{S}_2$.

Let us consider the case that $G$ has a face $f'$ of length $4$ distinct from $f$; hence, an edge of $G$ splits $h$ into a $4$-face $f'$ and a $(|C_1|+|C_2|-2)$-face $h'$.
If $G$ contains no $4$-face other than $f$ and $f'$, then by Theorem~\ref{thm-girth5}, $h'$ is a face of $G$, and thus
$G$ is one of the basic graphs $\text{Xq}_1$, $\text{Xq}_2$, $\text{Xq}_3$, $\text{Xq}_4$, $\text{Q}_5$, $\text{Tr}_1$, or $\text{Tr}_2$.
Finally, $G$ might also have four $4$-faces, and then $G$ is one of the basic graphs
$\text{Q}_1$, $\text{Q}_2$, $\text{Q}_3$, or $\text{Q}_4$.
\end{proof}

By combining Lemmas~\ref{lemma-legalidentification} and Lemma~\ref{lemma-4face}, we obtain the following.

\begin{lemma}\label{lemma-basic}
Let $G$ be a tame graph embedded in the cylinder with rings of length at most $4$.  Suppose that $G$ is a chain of
$n\ge 1$ critical graphs with the sequence $\mathcal{C}$ of cutting cycles.
Then there exists a tame graph $G'$ embedded in the cylinder with the same rings, such that $G'$ dominates $G$ and
$G'$ is a chain of at least $n$ basic graphs with cutting cycles $\mathcal{C}'$, with $r(G',\mathcal{C}')\ge r(G,\mathcal{C})$.
\end{lemma}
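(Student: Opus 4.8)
**

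The plan is to combine the two preceding lemmas via a descent/induction argument on the number of vertices. Given a tame chain $G$ of $n\ge 1$ critical graphs with cutting cycles $\mathcal{C}$, I would consider two cases depending on whether any graph $G_i$ in the chain admits a legal identification. If some $G_i$ contains a $4$-face allowing a legal identification, then Lemma~\ref{lemma-legalidentification} produces a tame graph $H$ on strictly fewer vertices that dominates $G$, has the same rings, and is a chain of at least $n$ critical graphs with cutting cycles $\mathcal{C}'$ satisfying $r(H,\mathcal{C}')\ge r(G,\mathcal{C})$. Since $|V(H)|<|V(G)|$, I would apply the induction hypothesis to $H$, obtaining a tame chain $G'$ of at least $n$ basic graphs with cutting cycles $\mathcal{C}''$ such that $G'$ dominates $H$ and $r(G',\mathcal{C}'')\ge r(H,\mathcal{C}')$. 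Domination is transitive, so $G'$ dominates $G$, and chaining the inequalities gives $r(G',\mathcal{C}'')\ge r(G,\mathcal{C})$, as required.

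The base case of the induction is precisely the situation where \emph{no} graph $G_i$ in the chain admits a legal identification. Here the goal is to show that $G$ is already a chain of basic graphs (so one may take $G'=G$, $\mathcal{C}'=\mathcal{C}$). For each $i\in\{1,\dots,n\}$, I would verify that the subgraph $G_i$ drawn between $C_{i-1}$ and $C_i$ satisfies the hypotheses of Lemma~\ref{lemma-4face}: $G_i$ is tame, its every triangle equals one of its two rings $C_{i-1},C_i$ (by the definition of a chain), and its rings are either vertex-disjoint or consist of one triangle and one $4$-cycle (again from the chain definition, where the only permitted intersection is between a $4$-cycle and a triangle at the ends). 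One must also check that each $G_i$ is critical---this should follow because the whole chain $G$ is built from critical pieces, or can be arranged by replacing each $G_i$ with its maximal critical subgraph without decreasing $r$ or losing basicness. Since $G_i$ admits no legal identification, Lemma~\ref{lemma-4face} forces $G_i$ to be basic.

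The main technical point---and the step I expect to require the most care---is ensuring that ``no legal identification anywhere in the chain'' correctly reduces to ``no legal identification within each individual $G_i$'' so that Lemma~\ref{lemma-4face} applies piece by piece. A $4$-face of $G$ lies entirely within a single $G_i$, and the legality condition used in Lemma~\ref{lemma-4face} (phrased with rings $C_1,C_2$ and cutting cycles $C_1,C_2$) must be matched against the chain-version of legality (phrased with cutting cycles $C_{i-1},C_i$ and special vertices) used in Lemma~\ref{lemma-legalidentification}. I would confirm that these two notions of legality agree when restricted to a single link $G_i$, treating $C_{i-1}$ and $C_i$ as the two rings; the special vertices of the chain are exactly the ring vertices in this local view. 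A subtlety is the end links $G_1$ and $G_n$, where a $4$-cycle ring may intersect a triangle ring: but Lemma~\ref{lemma-4face} explicitly permits the case of one triangle and one $4$-cycle sharing structure, so this is covered. Once this correspondence is established, the induction closes cleanly.
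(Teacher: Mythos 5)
Your proposal is correct and follows essentially the same route as the paper: induction on $|V(G)|$, using the contrapositive of Lemma~\ref{lemma-4face} to find a legal identification in any non-basic link and then descending via Lemma~\ref{lemma-legalidentification}. The paper's proof is a terser version of exactly this argument (the criticality of each $G_i$ is already part of the hypothesis, so no replacement by maximal critical subgraphs is needed), and your extra care about matching the two notions of legality is a reasonable elaboration of a step the paper leaves implicit.
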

\begin{proof}
We prove the claim by induction on $|V(G)|$.
Let $G$ be a chain of critical graphs $G_1$, \ldots, $G_n$ with cutting cycles $\mathcal{C}=\{C_0,\ldots,C_n\}$.
If all of $G_1$, \ldots, $G_n$ are basic, then the claim is trivialy true with $G'=G$.
Otherwise, by Lemma~\ref{lemma-4face}, there exists $i\in\{1,\ldots, n\}$ such that a $4$-face in $G_i$
admits a legal identification.  Then, the claim follows by the induction hypothesis applied to
the graph $H$ obtained by Lemma~\ref{lemma-legalidentification}.
\end{proof}

Dvo\v{r}\'ak and Lidick\'y~\cite{dvolid} gave an exact description of critical graphs embedded in the cylinder with rings of length at most $4$ and without
contractible $(\le\!4)$-cycles---in addition to the infinite family of reduced Thomas-Walls graphs, there are only 95 such critical graphs.
In particular, their examination gives the following.
\begin{theorem}[Dvo\v{r}\'ak and Lidick\'y~\cite{dvolid}]\label{thm-dvolid}
Let $G$ be a graph embedded in the cylinder with rings of length at most $4$, such that $G$ contains no contractible $(\le\!4)$-cycles.
If $G$ is critical and the distance between its rings is at least $6$, then $G$ is a reduced Thomas-Walls graph.
\end{theorem}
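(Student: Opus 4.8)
The plan is to show that a critical graph $G$ as in the statement is forced to be a long, thin ``ladder'' of triangles, which is exactly the structure of a reduced Thomas-Walls graph. First I would record the consequences of the hypotheses. Since $G$ has no contractible $(\le\!4)$-cycle and every inner face is a contractible disk, every face of $G$ other than the two ring faces has length at least $5$. Since $G$ is critical, Lemma~\ref{lemma-fr} gives that every vertex off the rings has degree at least $3$ and that short contractible closed walks must bound faces. I would then set up a discharging argument on the annulus, assigning charge $d(v)-4$ to each vertex $v$ and $\ell(f)-4$ to each face $f$; because the cylinder has Euler characteristic $0$ we have $V+F=E$, so the total charge is $(2E-4V)+(2E-4F)=0$. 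The only negative contributions come from degree-$3$ off-ring vertices and from low-degree ring vertices, while every inner $(\ge\!5)$-face carries positive charge; balancing these forces the interior to be extremely sparse, ruling out any ``fat'' region and supporting a thin, ladder-like interior.

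Next I would extract a global structure from this local sparseness. The goal is to produce a sequence of pairwise vertex-disjoint non-contractible cycles $C_1=K_0,K_1,\ldots,K_m=C_2$ of length at most $4$, each separating its neighbours, such that consecutive cycles lie at bounded distance and the subgraph drawn between $K_{j-1}$ and $K_j$ is one of finitely many ``elementary'' pieces. The hypothesis $\mathrm{dist}(C_1,C_2)\ge 6$ guarantees that at least one genuinely internal cutting cycle exists and that $G$ is not one of the degenerate small configurations. The absence of contractible short cycles then pins down each piece: using Theorems~\ref{thm-6cyc} and~\ref{thm-girth5} together with Lemma~\ref{lemma-fr}, I would argue that no extra edges or vertices can be hidden inside the disks bounded by the relevant closed walks, so each region between consecutive cutting cycles is completely determined by its boundary.

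With this chain decomposition in hand I would classify the elementary pieces. Criticality rules out any piece that could be simplified or enumerated away, and the absence of contractible $(\le\!4)$-cycles rules out any piece containing a short handle; what survives is essentially the single Thomas-Walls ``diamond'' gadget, together with a bounded list of terminal pieces that can only occur immediately next to a ring. I would then argue by peeling: remove the outermost piece, observe that the remainder is again critical, tame, embedded in the cylinder with rings of length at most $4$ and no contractible $(\le\!4)$-cycle, and recurse. Since the only non-terminal piece is the Thomas-Walls gadget, a graph whose rings are far apart must be assembled entirely from these gadgets, which is exactly the definition of $T'_n$.

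The main obstacle is the local classification of the elementary pieces: showing that the only way to fill the region between two consecutive short non-contractible cycles, while keeping the graph critical, tame, and free of contractible $(\le\!4)$-cycles, is the Thomas-Walls gadget, and that every alternative is terminal. This is precisely the step that, in the argument of Dvo\v{r}\'ak and Lidick\'y~\cite{dvolid}, produces the finite list of $95$ sporadic critical graphs; the content of the present statement is that each of those sporadic graphs has its two rings at distance at most $5$, so that imposing $\mathrm{dist}(C_1,C_2)\ge 6$ eliminates them and leaves only the reduced Thomas-Walls family. I expect the bulk of the work---and the part most naturally handled by a careful, possibly computer-assisted, case analysis---to be this enumeration and the verification that all sporadic graphs are ``short.''
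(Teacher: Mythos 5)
This statement is not proved in the paper at all: it is imported verbatim as an external result, stated as a corollary of the full classification in Dvo\v{r}\'ak and Lidick\'y~\cite{dvolid} of critical graphs in the cylinder with rings of length at most $4$ and no contractible $(\le\!4)$-cycles (the reduced Thomas--Walls family plus $95$ sporadic examples). So there is no in-paper proof to compare against, and your proposal has to stand on its own. As a proof it has a genuine gap: everything that actually carries the content of the theorem is deferred. The discharging setup is correct as bookkeeping (total charge $0$ on the cylinder), but the conclusion you draw from it --- ``balancing these forces the interior to be extremely sparse, \ldots supporting a thin, ladder-like interior'' --- is asserted, not derived; reduced Thomas--Walls graphs themselves contain many degree-$3$ vertices and many $5$-faces, so the charge balance is delicate and does not by itself exclude fatter configurations. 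Likewise, the ``classification of the elementary pieces'' between consecutive cutting cycles, which you correctly identify as the crux, is never carried out: you do not exhibit the finite list of pieces, prove that the list is complete, or verify that every non-Thomas--Walls assembly forces the rings to be at distance at most $5$. You acknowledge this yourself in the final paragraph, which means the proposal is a plan for a proof rather than a proof.

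Two smaller points. First, your claim that every non-ring face has length at least $5$ needs the faces to be bounded by cycles rather than general closed walks; for a critical graph this can be arranged via Lemma~\ref{lemma-fr} and $2$-connectivity considerations, but it should be said. Second, your framing of the theorem's content --- that the classification yields $T'_n$ plus finitely many sporadic graphs, and that the hypothesis $\mathrm{dist}(C_1,C_2)\ge 6$ exists precisely to kill the sporadic cases --- is exactly right and matches how the present paper uses the citation. If you intend to make this self-contained, the work you must supply is the enumeration itself (in~\cite{dvolid} this is a substantial, computer-assisted generation of all critical graphs in this class) together with the explicit check that each of the $95$ sporadic graphs has its rings at distance at most $5$.
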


We need the following observation about basic graphs.
\begin{lemma}\label{lemma-simno4}
Let $G$ be a tame chain of basic graphs, such that each of them either contains no contractible $4$-cycles,
or is isomorphic to $\text{EV}_1$, $\text{EV}_2$, $\text{Tr}_1$, $\text{Tr}_2$, $\text{S}_1$, or $\text{S}_2$,
with pairwise vertex-disjoint cutting cycles $C_0$, \ldots, $C_n$.
If $n\ge 5$, then $G$ is dominated by a graph $G'$ with the same vertex-disjoint rings $C_0$ and $C_n$
and without contractible $(\le\!4)$-cycles.
Furthermore, if $|C_0|=4$ and $e_0$ is any edge of $C_0$, then $G'$ can be chosen so that $e$ is not contained in a triangle in $G'$.
\end{lemma}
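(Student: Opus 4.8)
The plan is to process the chain one cutting cell at a time, leaving every basic piece that already has no contractible $4$-cycle untouched, and replacing each of the six special pieces $\text{EV}_1,\text{EV}_2,\text{T}_1,\text{T}_2,\text{S}_1,\text{S}_2$ by a graph $H_i$ spanning the same two cutting cycles $C_{i-1},C_i$, containing no contractible $4$-cycle, and \emph{dominating} $G_i$ in the sense that every $3$-coloring of $C_{i-1}\cup C_i$ that extends to $H_i$ also extends to $G_i$. The splicing is sound: given a $3$-coloring $\chi$ of the modified chain $G'$, its restriction to each replaced cell extends to the corresponding $G_i$ while agreeing with $\chi$ on $C_{i-1}$ and $C_i$; since the cutting cycles are pairwise vertex-disjoint and consecutive cells share only their common cutting cycle, these extensions glue to a $3$-coloring of $G$ with the same values on $C_0$ and $C_n$. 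Hence $G'$ dominates $G$, and it remains to produce the local replacements and to check that the result has no contractible $(\le\!4)$-cycle.

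For the local replacements I would first compute, for each of the six explicit graphs of Figure~\ref{fig-basic}, the exact set of colorings of its two cutting cycles that extend to it. When both cutting cycles have length $4$ this is done through the $v_1$-diagonal/bichromatic/dangerous vocabulary together with Lemma~\ref{lemma-col-tw}; when one cutting cycle is a triangle I would instead record the winding shift the piece forces, using winding numbers, Lemma~\ref{lemma-cylflow}, Lemma~\ref{lemma-col44}, and Theorem~\ref{thm-onetri}. Comparing these sets with the coloring behavior of reduced Thomas-Walls graphs—whose extendable colorings are exactly those dangerous on at most one ring once the order is at least $4$—I expect each special piece to be at least as permissive as a suitably long reduced Thomas-Walls segment with matching cutting cycles, so that this segment, being free of contractible $4$-cycles, dominates the special piece. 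In the cases where a single such segment is not restrictive enough, because the special graph forbids a coloring that Thomas-Walls would allow, I would merge the special cell with one neighbouring basic cell and replace the pair by a two-link segment or one of the critical graphs classified in Theorem~\ref{thm-dvolid}; the hypothesis $n\ge 5$ guarantees that such a neighbour is always available and that the total number of cells, hence the distance between $C_0$ and $C_n$, stays large enough to keep the characterisation of Lemma~\ref{lemma-col-tw} in force.

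Next I would verify that $G'$ has no contractible $(\le\!4)$-cycle. Because $C_0,\ldots,C_n$ are pairwise vertex-disjoint, non-contractible, and separating, any contractible $(\le\!4)$-cycle of $G'$ must lie in the closure of a single cell, using at most its two bounding cutting cycles; each replacement piece is chosen without such a cycle (reduced Thomas-Walls graphs and the graphs of Theorem~\ref{thm-dvolid} have none), and since the cutting cycles are induced and acquire no new chords from the replacement, none is created at the interfaces. Applying Lemma~\ref{lemma-fr} to the unchanged cells confirms they contribute none either.

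Finally, for the requirement when $|C_0|=4$, suppose $e_0\in E(C_0)$ and the first cell of $G'$ built above places $e_0$ in a triangle. The only triangles meeting $C_0$ arise from the first replacement piece $H_1$, so it suffices to re-choose $H_1$—absorbing the adjacent cell if necessary, which is possible as $n\ge 5$—as a reduced Thomas-Walls segment attached to $C_0$ along the interface pair opposite $e_0$, so that the triangles incident with $C_0$ avoid $e_0$ while the dominating behavior is unchanged; this is precisely the latitude recorded by the strong/weak distinction for rings of Thomas-Walls graphs. I expect the genuine difficulty to be the second paragraph: ruling out that some special piece is strictly more restrictive than every admissible contractible-$4$-cycle-free replacement, and, where it is, pinning down the correct merge with an adjacent cell; the remaining paragraphs are bookkeeping around this core case analysis.
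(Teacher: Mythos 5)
There is a genuine gap, and it sits exactly where you flagged the ``genuine difficulty'': the claim that each special piece $G_i\in\{\text{EV}_1,\text{EV}_2,\text{T}_1,\text{T}_2,\text{S}_1,\text{S}_2\}$ admits a dominating replacement $H_i$ \emph{on the same two vertex-disjoint cutting cycles} without contractible $4$-cycles. The paper's replacements $\mathcal{R}(H)$ are obtained by identifying a vertex (or two adjacent vertices) of $C_{i-1}$ with a vertex of $C_i$ and deleting the interior, producing the graphs I$_1$--I$_4$ in which the two cutting cycles \emph{intersect}. This is not a stylistic choice: these six pieces force equalities of colors between the two cutting cycles, so any dominating graph must enforce the same equalities, and a reduced Thomas-Walls segment cannot do this --- for $n\ge 4$ it admits every precoloring that is not dangerous on both rings (Lemma~\ref{lemma-col-tw}), hence admits colorings that do not extend to the special piece, which is the wrong inclusion for domination. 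Your fallback of merging the special cell with a neighbouring cell and invoking Theorem~\ref{thm-dvolid} is only asserted; that theorem applies to critical graphs with rings at distance at least $6$ and again yields only reduced Thomas-Walls graphs, so it does not supply the missing, more restrictive replacement.

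Because the replacements necessarily merge vertices of $C_{i-1}$ with vertices of $C_i$, the actual content of the lemma is controlling how these identifications cascade along the chain: after replacing every cell, the rings $C_0$ and $C_n$ of $G'$ may end up sharing a vertex, contradicting the required conclusion. Your proposal never confronts this, since it assumes the cutting cycles stay pairwise disjoint throughout. The paper's proof is almost entirely devoted to this point: the sets $\mathcal{D}(H)$ record which ring vertices are forced into the identification, the vertex $v_i\notin\mathcal{D}(G_i)$ is chosen at each step to break the propagation (and to keep triangles from sharing edges and to keep $e_0$ out of a triangle), and when the rings of $G'$ still share one vertex $z$, a separate endgame argument (the graphs $F_1$, $F_2$, $F_2'$, $F_3$, using Theorem~\ref{thm-girth5} and inspection of $G_4$, $G_5$) produces a corrected dominating graph. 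This is also where the hypothesis $n\ge 5$ is genuinely used --- not merely to guarantee that a neighbouring cell exists, as in your sketch. As written, your argument would need this entire mechanism added to go through.
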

\begin{proof}
Let $C_0$, \ldots, $C_n$ be the cutting cycles of the chain, and for $i=1,\ldots,n$, let $G_i$ denote the subgraph of $G$
drawn between $C_{i-1}$ and $C_i$.  

We start with several observations about graphs dominating $\text{EV}_1$, $\text{EV}_2$, $\text{Tr}_1$, $\text{Tr}_2$, $\text{S}_1$, or $\text{S}_2$.
Let $H$ be one of these graphs, with rings $u_1u_2\ldots$ and $v_1v_2\ldots$ labelled as in Figure~\ref{fig-basic}.
In all the identifications described below, we suppress arising parallel edges.
\begin{itemize}
\item If $H=\text{EV}_1$ or $H=\text{EV}_2$, then
\begin{itemize}
\item let $H'$ be the graph isomorphic to one of the graphs I$_1$ or I$_2$ depicted in Figure~\ref{fig-identification}, obtained from $H$ by identifying $u_2$ with $v_2$ and $u_1$ with $v_1$ and by removing
the vertices that do not belong to the rings, and
\item let $H''$ be the graph isomorphic to one of the graphs I$_1$ or I$_2$, obtained from $H$ by identifying $u_2$ with $v_2$ and $u_3$ with $v_3$ and by removing
the vertices that do not belong to the rings.
\end{itemize}
Let $\mathcal{R}(H)=\{H',H''\}$ and $\mathcal{D}(H)=\{u_2,v_2\}$.
\item If $H=\text{Tr}_1$, then
\begin{itemize}
\item let $H'$ be the graph isomorphic to the graph I$_1$ depicted in Figure~\ref{fig-identification}, obtained from $H$ by identifying $u_2$ with $v_2$ and $u_1$ with $v_1$, and
\item let $H''$ be the graph isomorphic to the graph I$_1$, obtained from $H$ by identifying $u_2$ with $v_4$ and $u_3$ with $v_1$.
\end{itemize}
Let $\mathcal{R}(H)=\{H',H''\}$ and $\mathcal{D}(H)=\{u_2,v_1\}$.
\item If $H=\text{Tr}_2$, then
\begin{itemize}
\item let $H'$ be the graph isomorphic to the graph I$_1$ depicted in Figure~\ref{fig-identification}, obtained from $H$ by identifying $u_1$ with $v_1$ and $u_2$ with $v_2$, and
\item let $H''$ be the graph isomorphic to the graph I$_4$ depicted in Figure~\ref{fig-identification}, obtained from $H$ by identifying $u_1$ with $v_3$. 
\end{itemize}
Let $\mathcal{R}(H)=\{H',H''\}$ and $\mathcal{D}(H)=\{u_1\}$.
\item If $H=\text{S}_1$, then
let $H'$ be the graph isomorphic to the graph I$_3$ depicted in Figure~\ref{fig-identification}, obtained from $H$ by identifying $u_1$ with $v_1$.
Let $\mathcal{R}(H)=\{H'\}$ and $\mathcal{D}(H)=\{u_1,v_1\}$.
\item If $H=\text{S}_2$, then
let $H'$ be the graph isomorphic to the graph I$_3$ depicted in Figure~\ref{fig-identification}, obtained from $H$ by identifying $u_4$ with $v_4$.
Let $\mathcal{R}(H)=\{H'\}$ and $\mathcal{D}(H)=\{u_4,v_4\}$.
\end{itemize}
Observe that every graph in $\mathcal{R}(H)$ dominates $H$ and contains no contractible $(\le\!4)$-cycle.  For every edge $e$ of a ring of $H$, there exists
at least one graph $H_e\in\mathcal{R}(H)$ such that the corresponding edge in $H_e$ is not shared by both rings.  Furthermore,
if a vertex $v$ of a ring of $H$ does not belong to $\mathcal{D}(H)$, then there exists at least one graph $H_v\in\mathcal{R}(H)$ such that the corresponding vertex in $H_v$ is not shared by both rings.

\begin{figure}[ht]
\begin{center}
\includegraphics[page=1,scale=1]{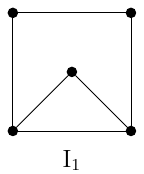}
\includegraphics[page=2,scale=1]{fig-identification}
\includegraphics[page=3,scale=1]{fig-identification}
\includegraphics[page=22,scale=1]{fig-basic}
\end{center}
\caption{Results of identifications in Lemma~\ref{lemma-simno4}.}\label{fig-identification}
\end{figure}

For $i=1,\ldots, n$, let $G'_i$ be the graph obtained as follows.
Let $v_i$ be a vertex of $C_{i-1}$ not belonging
to $\mathcal{D}(G_i)$, chosen so that
\begin{itemize}
\item if $i=1$, then $v_i$ is incident with $e_0$,
\item if $i\ge 2$ and the intersection $Z_i$ of $C_0$ and $C_{i-1}$ in $G'_1\cup\ldots\cup G'_{i-1}$ is not a subset of $\mathcal{D}(G_i)$, then $v_i\in Z_i\setminus\mathcal{D}(G_i)$, and
\item if $i\ge 2$, $|C_{i-1}|=4$ and the intersection $Y_i$ of $C_{i-2}$ and $C_{i-1}$ in $G'_{i-1}$ is not a subset of $\mathcal{D}(G_i)$, then $v_i\in Y_i\setminus\mathcal{D}(G_i)$.
\end{itemize}
If $G_i$ does not contain any contractible $4$-cycle, then let $G'_i=G_i$.  Otherwise, let $G'_i$ be an element of $\mathcal{R}(G_i)$, chosen so that
the vertex of $G'_i$ corresponding to $v_i$ is not shared by both rings of $G'_i$.

Note that this ensures that no two triangles of $G'=G'_1\cup\ldots\cup G'_n$ share an edge, and thus $G'$ contains no contractible $(\le\!4)$-cycles.
Furthermore, the edge corresponding to $e_0$ is not contained in a triangle of $G'$.

The graph $G'$ dominates $G$.  Note that $|Z_2|\le 2$ and $|Z_i|\le 1$ for $i\ge 3$, and thus
the rings of $G'$ share at most one vertex.
If the rings of $G'$ are vertex-disjoint, then $G'$ satisfies the conclusion of the lemma.
Hence, suppose the rings of $G'$ share a unique vertex $z$.  This is only possible if each of $G_1$, \ldots, $G_n$ is
isomorphic to $\text{EV}_1$, $\text{EV}_2$, $\text{Tr}_1$, $\text{Tr}_2$, $\text{S}_1$, or $\text{S}_2$.
For $i=0,\ldots, n$, let $z_i$ denote the vertex of $C_i$ corresponding to $z$.

Consider the graph $F_1=G'_1\cup G'_2\cup G'_3$, whose rings $C_0$ and $C_3$ intersect only in $z$.  
Observe that every edge of $F_1$ with both ends in $V(C_0\cup C_3)$ is contained in $E(C_0\cup C_3)$.  Since $|C_0|+|C_3|\le 8$ and $F_1$ contains
no contractible $(\le\!4)$-cycle, Theorem~\ref{thm-girth5} shows that every precoloring of $C_0\cup C_3$ that assigns the same color to $z_0$ and $z_3$
extends to a $3$-coloring of $F_1$.  Since $z$ is contained in both rings of $G'$, the choice of $G'_4$ implies that $z_3$ belongs to $\mathcal{D}(G_4)$.
Hence, $F_1\cup G_4$ is dominated by a graph obtained from $G_4$ by adding the cycle $C_0$ and identifying $z_0$ with $z_3$.
By inspecting all choices of $G_4\in\{\text{EV}_1,\text{EV}_2,\text{Tr}_1,\text{Tr}_2,\text{S}_1,\text{S}_2\}$ and $z_3\in \mathcal{D}(G_4)$, we conclude
that the following holds.
\begin{itemize}
\item if $|C_4|=3$, then $F_1\cup G_4$ is dominated by the graph $F_2$ obtained from the disjoint union of $C_0$ and $C_4$ by adding an edge
between $z_0$ and a vertex of $C_4$, and
\item if $|C_4|=4$, then either
\begin{itemize}
\item $F_1\cup G_4$ is dominated by the graph $F'_2$ isomorphic to $\text{Xq}_6$ or $\text{Tr}'_2$ obtained from the disjoint union of $C_0$ and $C_4$ by adding edges
between $z_0$ and two non-adjacent vertices of $C_4$ distinct from $z_4$, or
\item $G_4$ is isomorphic to $\text{Tr}_1$.
\end{itemize}
\end{itemize}
If $|C_4|=3$, then $F_2\cup G'_5\cup \ldots\cup G'_n$ satisfies the conclusions of the lemma.  Let us consider the case that $|C_4|=4$.
If $G_4$ is isomorphic to $\text{Tr}_1$ (and thus $|C_3|=3$), note that none of the conditions for specifying $v_4$ applies, and thus
we can choose $G'_4$ so that $z$ does not correspond to any vertex of $\mathcal{D}(G_5)$.  However, then the choice of $v_5$ ensures
that $C_0$ and $C_5$ are vertex-disjoint in $G'$, which is a contradiction.

Hence, we can assume that $F_1\cup G_4$ is dominated by the graph $F'_2$.
By inspection of all choices of $G_5\in\{\text{EV}_1,\text{EV}_2,\text{Tr}_1,\text{Tr}_2,\text{S}_1,\text{S}_2\}$ and $z_4\in \mathcal{D}(G_4)$,
we conclude that $F'_2\cup G_5$ is dominated by a graph $F_3$
obtained from the vertex-disjoint union of cycles $C_0$ and $C_5$ by adding an edge between them,
and thus $F_3\cup G'_6\cup\ldots \cup G'_n$ satisfies the conclusions of the lemma.
\end{proof}

Let us now prove a variation on Theorem~\ref{thm-dvolid}.

\begin{lemma}\label{lemma-simchain}
Let $G$ be a tame chain of $n$ basic graphs, such that each of them either contains no contractible $4$-cycles,
or is isomorphic to $\text{J}_1$, $\text{EV}_1$, $\text{EV}_2$, $\text{Tr}_1$, $\text{Tr}_2$, $\text{S}_1$, or $\text{S}_2$.
If $n\ge 32$, then either every precoloring of the rings of $G$ extends to a $3$-coloring of $G$, or both rings of $G$
have length $4$ and $G$ is dominated by the reduced Thomas-Walls graph $T'_4$.
\end{lemma}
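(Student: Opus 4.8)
The plan is to reduce to the case of a chain without contractible $(\le\!4)$-cycles, where Theorem~\ref{thm-dvolid} applies, and then to exploit the winding-number machinery of Section~\ref{sec-htw} to identify the obstruction precisely as $T'_4$. First I would dispose of the graph $\text{J}_1$, whose rings of length $3$ and $4$ share a vertex: since the statement asks for both rings to have length $4$ in the exceptional case, any occurrence of $\text{J}_1$ forces one ring to be a triangle, and I expect that with $n\ge 32$ one can argue directly (using Theorem~\ref{thm-onetri} together with Lemma~\ref{lemma-cylflow}) that every precoloring then extends. More generally, if either ring has length $3$, a long chain of basic graphs behaves like a quadrangulation with a triangular ring, and Lemma~\ref{lemma-col44} shows every precoloring extends unless the winding numbers on the two rings are forced to be opposite — a situation that cannot be forced along a sufficiently long chain. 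So the substantive case is when both rings have length $4$.

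Assuming both rings have length $4$, I would apply Lemma~\ref{lemma-simno4} to the chain (using $n\ge 32 \ge 5$, after splitting off the bounded initial segment that handles $\text{J}_1$ and ring-sharing) to replace $G$ by a dominating graph $G'$ with \emph{vertex-disjoint} rings $C_0$ and $C_n$ and \emph{no} contractible $(\le\!4)$-cycles. The numeric slack between $5$ and $32$ is there precisely to absorb the constantly many graphs consumed in merging the triangle-sharing configurations and to guarantee that the distance between the rings of $G'$ remains large (at least $6$, as required below). Since $G'$ dominates $G$, it suffices to show that either every precoloring of the rings of $G'$ extends, or $G'$ is dominated by $T'_4$.

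Now $G'$ is critical (or contains a critical subgraph with the same rings), has vertex-disjoint rings of length $4$, no contractible $(\le\!4)$-cycles, and rings at distance at least $6$, so Theorem~\ref{thm-dvolid} tells us $G'$ is a reduced Thomas-Walls graph $T'_m$ for some $m$. It remains to compare $m$ with $4$. By Lemma~\ref{lemma-col-tw}, for $m\ge 4$ a precoloring of the rings of $T'_m$ extends if and only if it is not dangerous on both rings, and this extension condition is \emph{identical} for all $m\ge 4$; hence $T'_m$ with $m\ge 4$ has exactly the same set of extendable precolorings as $T'_4$, so $T'_4$ dominates $T'_m$ and we land in the second alternative. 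For $m\le 3$ one checks by the same lemma (or a direct case analysis, as in the proof of Lemma~\ref{lemma-col-htw}) that every precoloring of the rings extends, giving the first alternative.

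The main obstacle I anticipate is the bookkeeping in the reduction to $G'$ rather than any single deep idea: one must verify that the initial segments handling $\text{J}_1$ and the shared-vertex cases can be peeled off while keeping a residual subchain of length at least $5$ of the type demanded by Lemma~\ref{lemma-simno4}, and that the resulting $G'$ genuinely has its rings at distance at least $6$ so that Theorem~\ref{thm-dvolid} applies. The conceptual payoff — that \emph{all} of $T'_4, T'_5, \ldots$ collapse to a single obstruction $T'_4$ under domination — is immediate from Lemma~\ref{lemma-col-tw} once the reduction is in place, so the real work is confirming that a long chain of basic graphs with contractible $4$-cycles is always dominated by a genuinely triangle-sparse cylinder graph of the required girth.
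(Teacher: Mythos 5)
Your overall strategy is the paper's: eliminate contractible $(\le\!4)$-cycles via Lemma~\ref{lemma-simno4}, invoke Theorem~\ref{thm-dvolid} on a critical subgraph, and collapse all $T'_m$ with $m\ge 4$ to $T'_4$ via Lemma~\ref{lemma-col-tw}. But there is a genuine gap at the step you yourself flag as ``bookkeeping'': a \emph{single} application of Lemma~\ref{lemma-simno4} to the (trimmed) chain only guarantees that the rings of the dominating graph $G'$ are vertex-disjoint --- its proof can output graphs such as $F_3$, in which the two rings are joined by a single edge, so the distance between them may be $1$. Theorem~\ref{thm-dvolid} requires distance at least $6$, and nothing in your argument produces that. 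The paper's mechanism, which is the missing idea, is to partition the chain into six consecutive subchains of five basic graphs each (plus the end segments $F$ containing any $\text{J}_1$'s and ring-sharing), apply Lemma~\ref{lemma-simno4} to each subchain separately, and take the union: the seven pairwise vertex-disjoint cutting cycles bounding the pieces survive in $G'$ and force the rings to be at distance at least $6$. This is also where the ``furthermore'' clause of Lemma~\ref{lemma-simno4} (the choice of $e_0$) is used, to prevent triangles of consecutive pieces from sharing an edge of a common cutting cycle, which would create a contractible $4$-cycle in the union. Without the partition, the chain of implications breaks before Theorem~\ref{thm-dvolid} can be applied.

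Two smaller points. First, your claim that for $m\le 3$ every precoloring of the rings of $T'_m$ extends is false: by the first part of Lemma~\ref{lemma-col-tw}, a precoloring dangerous on both rings (e.g.\ diagonal on both interface pairs) does not extend to $T'_2$ or $T'_3$. This case is moot once distance at least $6$ is secured, since Theorem~\ref{thm-dvolid} then yields a reduced Thomas-Walls graph with rings far apart, hence $m\ge 4$; but as written your case analysis is wrong. Second, your separate treatment of rings of length $3$ via Lemma~\ref{lemma-col44} is both under-justified (the chain need not be a quadrangulation, so that lemma does not apply) and unnecessary: reduced Thomas-Walls graphs have both rings of length $4$, so Theorem~\ref{thm-dvolid} already implies that if some ring has length $3$ then $G'$ has no critical subgraph and every precoloring extends.
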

\begin{proof}
Let $C_0$, \ldots, $C_n$ be the cutting cycles of the chain.
Let $s_0=0$ if $C_0$ and $C_1$ are vertex-disjoint, and $s_0=1$ otherwise.
Let $s_1=n$ if $C_{n-1}$ and $C_n$ are vertex-disjoint, and $s_1=n-1$ otherwise.
Let $F$ be the union of subgraphs of $G$ drawn between $C_0$ and $C_{s_0}$, and between $C_{s_1}$ and $C_n$.
Note that if a component of $F$ contains a contractible $4$-cycle, then it is isomporphic to
$\text{J}_1$.  Identify two opposite vertices in each $4$-face of $F$, obtaining a graph $F'$.

For $i=1,\ldots, 5$, let $G_i$ denote the subgraph of $G$ drawn between $C_{s_0+5i-5}$ and $C_{s_0+5i}$.  Let $G_6$ denote the subgraph
of $G_i$ drawn between $C_{s_0+25}$ and $C_{s_1}$.  
By the definition of a chain, $\text{J}_1$ does not appear in the subchain bounded by $C_{s_0}$  and $C_{s_1}$.
For $i=1,\ldots, 6$, Lemma~\ref{lemma-simno4} implies that $G_i$ is dominated by
a graph $G'_i$ with the same vertex-disjoint rings, such that the graph $G'=F'\cup G'_1\cup\ldots\cup G'_6$ contains no
contractible $(\le\!4)$-cycles---this is ensured by choosing $e_0$ to be an edge of $C_{s_0+5i-5}$ contained in a triangle in $G'_{i-1}$ (if any) for $i=2,\ldots, 6$.

Note that the distance between $C_0$ and $C_n$ in $G'$ is
at least $6$.  Suppose that not every precoloring of $C_0\cup C_n$ extends to a $3$-coloring of $G$.  Then $G'$ is dominated
by its maximal critical subgraph $G''$, and by Theorem~\ref{thm-dvolid}, $G''$ is a reduced Thomas-Walls graph in that the
distance between $C_0$ and $C_n$ is at least $6$.  By Lemma~\ref{lemma-col-tw}, $G$ is dominated by the reduced Thomas-Walls graph $T'_4$.
\end{proof}

We call graphs $\text{Xq}_1$, \ldots, $\text{Xq}_7$ \emph{almost quadrangulations}.
We need the following claim.
\begin{lemma}\label{lemma-almostq}
Let $G$ be a tame chain of $n$ basic graphs, such that each of them 
is an almost quadrangulation.
If $n\ge 4$, then every precoloring of the rings of $G$ extends to a $3$-coloring of $G$.
\end{lemma}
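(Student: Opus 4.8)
The plan is to understand the almost quadrangulations X$_1$, \ldots, X$_7$ well enough to track how a precoloring of one ring propagates across the chain in terms of winding numbers, and then to exploit the fact that each X$_i$ is ``almost'' a quadrangulation (it differs from a quadrangulation in a controlled way) to apply the winding-number machinery from Section~\ref{sec-htw}. First I would examine each of the seven basic graphs X$_1$, \ldots, X$_7$ individually: since these are the basic graphs that have two $4$-faces sharing structure (as opposed to the Thomas-Walls-type, tent-type, or single-triangle types), each contains at least one contractible $4$-cycle but is a quadrangulation after one edge of the connecting face is removed or after a single legal identification. For each X$_i$ I would record, via Lemma~\ref{lemma-cylflow} and Lemma~\ref{lemma-col44}, exactly which precolorings of its ``incoming'' ring force which winding numbers on its ``outgoing'' ring; because each X$_i$ is essentially a quadrangulation with rings of length $4$, the winding number is preserved across it, and the only colorings that can fail to extend are those creating opposite winding numbers.

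The key observation I would establish is that for a chain of almost quadrangulations with rings of length $4$, any precoloring of one ring $C_0$ can be extended step by step through $G_1,\ldots,G_n$: the relevant invariant is the winding number of the partial coloring on the cutting cycles $C_1,\ldots,C_{n-1}$, which by Lemma~\ref{lemma-cylflow} is constant along any $3$-coloring of a quadrangulation. Since both rings have length $4$, the winding number is even, and the graph is bipartite; by the argument in the $|C_1|=4$ case of Lemma~\ref{lemma-col44}, extension is essentially unobstructed once the distance between the rings is large enough, which is guaranteed here because $n\ge 4$ and each almost quadrangulation contributes bounded distance. The main point is to verify that the small local deviations from being a quadrangulation (the $(\ge\!5)$-face and the extra edge in each X$_i$) do not create the kind of winding-number obstruction that Lemma~\ref{lemma-col44} warns about, and indeed they cannot, since that obstruction only arises for triangular rings ($|C_1|=|C_2|=3$), which do not occur here because almost quadrangulations have no triangles.

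Concretely, I would argue as follows. Each X$_i$ is triangle-free (it is an almost quadrangulation, and by construction none of X$_1$, \ldots, X$_7$ contains a triangle other than possibly a ring, but the rings here all have length $4$), so $G$ itself is a triangle-free graph embedded in the cylinder with both rings of length $4$. Then I would quadrangulate: since each X$_i$ differs from a quadrangulation only by having one face longer than $4$, I would use Lemma~\ref{lemma-fr} (the $6$-face conclusion) together with Theorem~\ref{thm-6cyc} to reduce $G$ to a genuine quadrangulation $G^\star$ with the same rings that dominates the extendability question, checking that the distance between $C_0$ and $C_n$ in $G^\star$ is at least $4=|C_0|$. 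Applying Lemma~\ref{lemma-col44} to $G^\star$, since $|C_0|=|C_n|=4$, every precoloring of $C_0\cup C_n$ extends, and hence so does every precoloring of the rings of $G$.

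The hardest part, I expect, is the case analysis over the seven graphs X$_1$, \ldots, X$_7$ to confirm uniformly that each reduces to a $4$-face-quadrangulation via Lemma~\ref{lemma-fr} without introducing contractible triangles or shrinking the ring-to-ring distance below $4$; the winding-number bookkeeping itself is routine once Lemma~\ref{lemma-col44} is in hand, precisely because the triangular obstruction is unavailable when both rings have length $4$.
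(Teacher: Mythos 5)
Your overall strategy for the case that the rings have length $4$ --- dominate $G$ by a quadrangulation with rings at distance at least $4$ and invoke Lemma~\ref{lemma-col44} --- is exactly what the paper does, and that half of your argument is sound. The gap is your claim that almost quadrangulations are triangle-free and that ``the rings here all have length $4$,'' which you then use to dismiss the winding-number obstruction of Lemma~\ref{lemma-col44} as unavailable. This premise is false: the rings of a chain may be triangles (the definition of a chain explicitly allows the cutting cycles, hence the rings, to be triangles), and one of the almost quadrangulations, namely $\text{X}_7$, has triangle rings; the paper's proof explicitly treats the case $|C_0|=3$, in which $G$ is a chain of copies of $\text{X}_7$. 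In that case your reduction breaks down at precisely the dangerous point: after dominating $G$ by a quadrangulation whose rings are the triangles $C_0$ and $C_n$, Lemma~\ref{lemma-col44} does \emph{not} give extendability of every precoloring, since a precoloring with opposite winding numbers on $C_0$ and $C_n$ fails to extend to that quadrangulation.

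The missing idea is to exploit the flexibility coming from the large face of the first copy of $\text{X}_7$: one first extends $\psi$ across the subgraph drawn between $C_0$ and $C_1$ to a coloring whose winding number on $C_1$ agrees with the winding number of $\psi$ on $C_n$ (this is possible exactly because that subgraph is not a quadrangulation, so Lemma~\ref{lemma-cylflow} does not pin the winding number down), and only then applies Lemma~\ref{lemma-col44} to the remaining part, where the winding numbers now match. Relatedly, your intermediate claim that the winding number is ``preserved across'' each $\text{X}_i$ is unjustified for the same reason: Lemma~\ref{lemma-cylflow} applies to quadrangulations, and each $\text{X}_i$ has a face of length at least $5$.
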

\begin{proof}
Let $C_0$, \ldots, $C_n$ be the cutting cycles of the chain, and let $\psi$ be a $3$-coloring of $C_0\cup C_n$.
The graph $G$ is dominated by a quadrangulation of the cylinder with rings at distance at least $4$ from each other.
Hence, if $|C_0|=4$, then $\psi$ extends to a $3$-coloring of $G$ by Lemma~\ref{lemma-col44}.
If $|C_0|=3$, then $G$ is a chain of the copies of $\text{Xq}_7$.  Observe that $\psi$ extends to a $3$-coloring $\varphi$
of the subgraph of $G$ drawn between $C_0$ and $C_1$ such that the winding number of $\varphi$ on $C_1$ is
the same as the winding number of $\psi$ on $C_n$. By Lemma~\ref{lemma-col44}, it follows that $\psi$ extends to a $3$-coloring of $G$.
\end{proof}

By combining these results, we obtain the following. 
\begin{lemma}\label{lemma-domin}
Let $G$ be a tame graph embedded in the cylinder with rings of length at most $4$.  
If $G$ is a chain of graphs, at least $131$ of which are not quadrangulated,
then either every precoloring of the rings of $G$ extends to a $3$-coloring of $G$, or both rings of $G$ have length $4$ and $G$ is dominated by the reduced Thomas-Walls graph $T'_4$.
\end{lemma}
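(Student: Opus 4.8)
The plan is to reduce $G$ to a chain of \emph{basic} graphs and then feed the two kinds of pieces into the two structural lemmas already proved. First I would replace each graph of the chain by its maximal critical subgraph; this does not change which precolorings of $C_1\cup C_2$ extend, and since a subgraph with the same rings of a non-quadrangulated graph is again non-quadrangulated, at least $131$ pieces stay non-quadrangulated. In particular $r(G,\mathcal{C})\ge 131$, because every non-quadrangulated graph is a non-quadrangulation. If every precoloring of $C_1\cup C_2$ already extends we are done, so assume otherwise. Applying Lemma~\ref{lemma-basic} yields a tame chain $G'$ of basic graphs that dominates $G$ with $r(G',\mathcal{C}')\ge r(G,\mathcal{C})\ge 131$. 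As $G'$ dominates $G$, it suffices to prove the dichotomy for $G'$: if every precoloring extends to $G'$ then it extends to $G$, and if $G'$ is dominated by $T'_4$ then so is $G$.

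Next I would classify the non-quadrangulation basic pieces. Every basic graph either has no contractible $4$-cycle or occurs in Figure~\ref{fig-basic}, and inspecting that list each non-quadrangulation piece is either \emph{hard} --- one of $\text{J}_1,\text{EV}_1,\text{EV}_2,\text{T}_1,\text{T}_2,\text{S}_1,\text{S}_2$ or a graph with no contractible $4$-cycle, i.e.\ exactly the pieces handled by Lemma~\ref{lemma-simchain} --- or else an almost quadrangulation $\text{X}_1,\ldots,\text{X}_7$; the remaining pieces are genuine quadrangulations and are quadrangulated. The guiding idea is that long runs of hard pieces are governed by Lemma~\ref{lemma-simchain} and can produce the reduced Thomas-Walls obstruction, whereas almost quadrangulations and quadrangulations are \emph{soft}: a consecutive run of them is dominated by a quadrangulation (as in the proof of Lemma~\ref{lemma-almostq}), so by Lemmas~\ref{lemma-col44} and~\ref{lemma-almostq} every precoloring of the rings of such a subchain extends. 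A subchain all of whose ring precolorings extend \emph{decouples} the chain, since the middle can interpolate between any pair of colorings of the two cutting cycles that bound it.

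Then, using that $131$ comfortably exceeds the thresholds $32$, $4$, and $5$ appearing in Lemmas~\ref{lemma-simchain}, \ref{lemma-almostq}, and~\ref{lemma-simno4}, I would locate a suitable subchain. If some block of consecutive soft pieces is long enough it decouples the chain, and the two end-subchains can be viewed in the disk with a single ring of length at most $4$; since $G'$ is tame, Theorem~\ref{thm-onetri} then shows that every precoloring of that ring extends to the end, so every precoloring of $C_1\cup C_2$ extends to $G'$. If instead there is a block of at least $32$ consecutive hard pieces, Lemma~\ref{lemma-simchain} applies to it: either it extends every precoloring of its rings and again decouples the chain, or both its rings have length $4$ and it is dominated by $T'_4$. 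In the last case I would propagate the obstruction outward by collapsing the remaining soft pieces through legal identifications (as in Lemmas~\ref{lemma-simno4} and~\ref{lemma-simchain}), reducing the whole chain to a critical graph with no contractible $(\le\!4)$-cycle whose rings lie at distance at least $6$; by Theorem~\ref{thm-dvolid} this is a reduced Thomas-Walls graph $T'_m$ with $m\ge 4$, and Lemma~\ref{lemma-col-tw} then shows $G'$, hence $G$, is dominated by $T'_4$ with both rings of length $4$.

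The hard part will be this combination step. The hard and soft pieces may be interleaved, so there need not be a long monochromatic run in the naive sense; one must argue that short soft blocks can be collapsed to fuse the surrounding hard pieces into one long Thomas-Walls-type structure, while long soft blocks genuinely decouple the instance. Making this precise requires careful bookkeeping: tracking ring lengths ($3$ versus $4$) and the winding-number and dangerousness constraints through both the decoupling and the collapses, and in particular handling the contractible triangles that can appear when an end-subchain is closed into a disk, so that the local outcomes of Lemmas~\ref{lemma-simchain} and~\ref{lemma-almostq} assemble into the single global dichotomy and force both rings of $G$ to have length $4$ in the $T'_4$ case. The value $131$ is chosen precisely to leave enough room for this argument to go through.
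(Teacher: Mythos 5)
Your outline starts the same way the paper does (pass to maximal critical subgraphs, apply Lemma~\ref{lemma-basic}, then split the basic pieces into those covered by Lemma~\ref{lemma-simchain} and the almost quadrangulations/quadrangulations), but the step you yourself flag as ``the hard part'' --- the interleaving of hard and soft pieces --- is exactly the step you do not supply, and it is not a matter of bookkeeping: it needs a specific mechanism you never name. The paper's mechanism is the vertex identification indicated by the dotted lines in Figure~\ref{fig-basic}: each quadrangulation piece $Q_1,\ldots,Q_5$ (and each almost quadrangulation, after first extending it to one of $Q_1,\ldots,Q_5$ by adding edges) is collapsed by identifying vertices of its two rings, so that the piece disappears from the chain and its two hard neighbours become adjacent. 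After collapsing every quadrangulation, and --- provided no four consecutive almost quadrangulations occur --- every almost quadrangulation as well, the \emph{entire} chain becomes a chain of at least $\lceil 131/4\rceil\ge 32$ pieces of the type handled by Lemma~\ref{lemma-simchain}, which is then applied once to the whole graph. No search for a long consecutive run of hard pieces is needed, and no separate ``propagation of the obstruction'' through soft pieces via legal identifications (Lemma~\ref{lemma-simno4} plays no such role; it is invoked only inside the proof of Lemma~\ref{lemma-simchain}).

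There is also an outright error in your decoupling claim. You assert that a long block of consecutive soft pieces is dominated by a quadrangulation and therefore ``extends every precoloring of the rings of such a subchain,'' hence decouples the chain. That is false for blocks consisting of quadrangulations: by Lemma~\ref{lemma-col44}, a long $3,3$-quadrangulation does \emph{not} extend precolorings with opposite winding numbers on its two triangular rings. The decoupling in the paper is used only for a run of four consecutive \emph{almost} quadrangulations, and the proof of Lemma~\ref{lemma-almostq} relies precisely on the $5$-face of $X_7$ to adjust the winding number in the triangle-ring case; genuine quadrangulation pieces must be eliminated by the collapsing identification, not by decoupling. As written, your argument would incorrectly conclude that every precoloring extends whenever the chain contains a long block of quadrangulations, which contradicts the existence of the near $3,3$-quadrangulation obstruction elsewhere in the paper.
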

\begin{proof}
Let $\mathcal{C}=C_0,C_1,\ldots, C_n$ be the sequence of cutting cycles of the chain $G$, and for $i=1,\ldots, n$, let $G_i$ be the subgraph of $G$ drawn between $G_{i-1}$ and $G_i$.
Without loss of generality, $G_i$ is critical, as otherwise we can replace $G_i$ by its maximal critical subgraph.
Furthermore, by Lemma~\ref{lemma-basic}, we can assume that $G_i$ is basic.
If $G_i$ is a quadrangulation, then $G_i$ is one of $Q_1$, \ldots, $Q_5$; in this case, we identify 
the vertices of the rings of $G_i$ as indicated by dotted lines in Figure~\ref{fig-basic}.

Hence, assume that none of $G_1$, \ldots, $G_n$ is a quadrangulation.  By the assumptions of the lemma, we have $n\ge 131$.

Suppose that there exists $i\in\{1, \ldots, n-3\}$ such that $G_i$, \ldots, $G_{i+3}$ are almost quadrangulations.
By Theorem~\ref{thm-onetri}, any $3$-coloring $\psi$ of $C_0\cup C_n$ extends to a $3$-coloring of $G_1\cup\ldots\cup G_{i-1}\cup G_{i+4}\cup\ldots\cup G_n$,
and by Lemma~\ref{lemma-almostq}, $\psi$ extends to a $3$-coloring of $G$.
Therefore, we can assume that no four consecutive graphs in the chain $G'''$ are almost quadrangulations.

Extend all almost quadrangulations in $G$ to quadrangulations Q$_1$, \ldots, Q$_5$ and 
identify the vertices of their rings as indicated by dotted lines in Figure~\ref{fig-basic}.
The resulting graph $G'$ is a chain of at least $32$ basic graphs dominating $G$,
such that none of the graphs in the chain is a quadrangulation or almost quadrangulation.
Also, if $\text{J}_1$ appears in the chain (necessarily as the first or the last element), identify the opposite vertices of its $4$-face not contained in the ring of $G'$,
thus turning it into $\text{I}_1$.
Similarly, we turn each of the graphs in the chain that is isomorphic to $\text{Tr}'_1$ or $\text{Tr}'_2$ into
$\text{Tr}_1$ or $\text{Tr}_2$ by adding an edge.
The claim of Lemma~\ref{lemma-domin} then follows by Lemma~\ref{lemma-simchain}.
\end{proof}

Actually, we only need the following consequence of Lemmas~\ref{lemma-col-tw} and \ref{lemma-domin}

\begin{corollary}\label{cor-chaincol}
Let $G$ be a tame graph embedded in the cylinder with rings $C_1$ and $C_2$ of length at most $4$.
Suppose that $G$ is a chain of graphs, at least $131$ of which are not quadrangulated.
If $|C_1|=3$ or $|C_2|=3$, then every precoloring of $C_1\cup C_2$ extends to a $3$-coloring of $G$.
If $|C_1|=|C_2|=4$, then for every $3$-coloring $\psi_1$ of $C_1$, there exists $v\in V(C_2)$
such that for every $v$-diagonal $3$-coloring $\psi_2$ of $C_2$, the precoloring $\psi_1\cup\psi_2$
extends to a $3$-coloring of $G$.
\end{corollary}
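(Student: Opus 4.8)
The plan is to apply Lemma~\ref{lemma-domin} to $G$ as a black box and then translate its two possible conclusions into the desired statement, using the coloring analysis of reduced Thomas--Walls graphs from Lemma~\ref{lemma-col-tw}. Lemma~\ref{lemma-domin} gives that either (a) every precoloring of $C_1\cup C_2$ extends to a $3$-coloring of $G$, or (b) both rings have length $4$ and $G$ is dominated by the reduced Thomas--Walls graph $T'_4$; recall that domination here means that every precoloring of $C_1\cup C_2$ extending to $T'_4$ also extends to $G$.

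The first assertion is then immediate. If $|C_1|=3$ or $|C_2|=3$, then conclusion (b), which requires $|C_1|=|C_2|=4$, cannot hold, so we must be in case (a), and every precoloring of $C_1\cup C_2$ extends to $G$.

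For the second assertion, assume $|C_1|=|C_2|=4$. In case (a) there is nothing to prove, since every precoloring extends and any $v\in V(C_2)$ works. So suppose we are in case (b), and inherit from the domination a labelling $C_2=v_1v_2v_3v_4$ in which $v_1v_3$ is the interface pair of $T'_4$; since $T'_4=T'_n$ with $n=4\ge 2$, the ring $C_2$ is strong. The key observation is that for a proper $3$-coloring of a $4$-cycle the three types ``$v_1$-diagonal'', ``$v_2$-diagonal'', and ``bichromatic'' are mutually exclusive and exhaustive (if $\psi(v_1)\neq\psi(v_3)$, then the third colour is forced on both $v_2$ and $v_4$, so $\psi(v_2)=\psi(v_4)$). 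Consequently, on the strong ring $C_2$ the colorings that are \emph{not} dangerous---that is, neither $v_1$-diagonal nor bichromatic---are precisely the $v_2$-diagonal ones. I would therefore take $v=v_2$. For any $v_2$-diagonal $\psi_2$ the combined precoloring $\psi_1\cup\psi_2$ is non-dangerous on $C_2$, hence dangerous on at most one of $C_1$ and $C_2$; by Lemma~\ref{lemma-col-tw} applied to $T'_4$ (where $n=4$), it extends to a $3$-coloring of $T'_4$, and thus, by domination, to a $3$-coloring of $G$.

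The routine part is invoking Lemmas~\ref{lemma-domin} and~\ref{lemma-col-tw}; the one step that needs care is the choice of $v$ in the second bullet. A naive attempt with $v=v_1$ fails when $\psi_1$ is dangerous on $C_1$, because a $v_1$-diagonal $\psi_2$ would make $\psi_1\cup\psi_2$ dangerous on both rings, and such a precoloring need not extend even to $T'_4$. The main obstacle is thus to realise that over three colours ``non-dangerous on $C_2$'' coincides exactly with a single diagonal class, namely the $v_2$-diagonal colorings, which is what makes the choice $v=v_2$ valid for every $\psi_1$---dangerous or not---simultaneously.
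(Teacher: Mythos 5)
Your proposal is correct and follows exactly the route the paper intends: the paper states Corollary~\ref{cor-chaincol} as an immediate consequence of Lemmas~\ref{lemma-domin} and~\ref{lemma-col-tw} without writing out the details, and your argument supplies precisely those details (in particular the observation that on the strong ring of $T'_4$ the non-dangerous colorings are exactly the $v_2$-diagonal ones, so $v=v_2$ works uniformly). No gaps.
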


Let us remark that using computer, we can signficantly improve the bound of Corollary~\ref{cor-chaincol} as follows.
\begin{lemma}\label{lemma-chaincol}
Let $G$ be a tame graph embedded in the cylinder with rings $C_1$ and $C_2$ of length at most $4$.
Suppose that $G$ is a chain of $n\ge 7$ graphs, at least $3$ of which are not quadrangulated.
If $|C_1|=3$ or $|C_2|=3$, then every precoloring of $C_1\cup C_2$ extends to a $3$-coloring of $G$.
If $|C_1|=|C_2|=4$, then for every $3$-coloring $\psi_1$ of $C_1$, there exists $v\in V(C_2)$
such that for every $v$-diagonal $3$-coloring $\psi_2$ of $C_2$, the precoloring $\psi_1\cup\psi_2$
extends to a $3$-coloring of $G$.
\end{lemma}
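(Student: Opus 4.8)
The plan is to follow the same route as Lemma~\ref{lemma-domin} and Corollary~\ref{cor-chaincol}, but to replace the long structural reductions by a single computer computation on transfer relations between ring colorings. First I would invoke Lemma~\ref{lemma-basic} to reduce to the case that $G$ is a chain $G_1,\ldots,G_n$ of \emph{basic} graphs with cutting cycles $\mathcal C=C_0,\ldots,C_n$; since the reduction preserves domination and satisfies $r(G',\mathcal C')\ge r(G,\mathcal C)$, the hypotheses ``$n\ge 7$'' and ``at least $3$ non-quadrangulated pieces'' are retained (up to the standard handling of the almost quadrangulations $\mathrm X_1,\ldots,\mathrm X_7$, which I would extend to the quadrangulations $\mathrm Q_1,\ldots,\mathrm Q_5$ exactly as in the proof of Lemma~\ref{lemma-domin}). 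The key point is that the basic graphs arising as single links between consecutive cutting cycles form a finite, explicit list: those with a contractible $4$-cycle appear in Figure~\ref{fig-basic}, and the remaining ones come from the classification of Dvo\v{r}\'ak and Lidick\'y~\cite{dvolid}.

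For each basic graph $H$ with rings $K_1$ and $K_2$, I would have the computer determine the binary relation $R_H$ consisting of all pairs $(\psi_1,\psi_2)$ of $3$-colorings of $K_1$ and $K_2$ that extend to a $3$-coloring of $H$. Colorings are identified up to permutation of the three colors, and I would record for each ring coloring only its \emph{type}: for a triangle ring this is its winding number ($+1$ or $-1$), and for a $4$-cycle ring $u_1u_2u_3u_4$ it is whether the coloring is bichromatic (winding number $0$), $u_1$-diagonal, or $u_2$-diagonal, together with the winding number $\pm 2$ in the diagonal cases. By Lemma~\ref{lemma-cylflow} this finite type data is exactly what a quadrangulated segment transmits, and by Lemmas~\ref{lemma-col44} and~\ref{lemma-col-33} a sufficiently long quadrangulated segment between two cutting cycles simply acts as the ``winding numbers must agree'' relation; this is where the bound $n\ge 7$ is used, to guarantee that the distances between consecutive triangle-bearing pieces are large enough for those lemmas to apply. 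The extension relation of the whole chain is then the composition $R_{G_1}\circ R_{G_2}\circ\cdots\circ R_{G_n}$ of the finitely many generating relations.

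The verification step is a finite automaton computation. Because each $R_{G_i}$ is a relation on the small, finite set of coloring types, the monoid generated by $\{R_H:H\text{ basic}\}$ is finite, and the relevant products stabilize once enough non-quadrangulated factors have been composed. I would have the computer enumerate all products arising from legal chains, tracking the total number of pieces and the number of non-quadrangulated pieces and capping both at the point where the composed relation ceases to change. The check to confirm is that every product with $n\ge 7$ and at least $3$ non-quadrangulated factors is \emph{permissive}: if $|C_0|=3$ or $|C_n|=3$ the relation is all of $\mathrm{Col}(C_0)\times\mathrm{Col}(C_n)$, and if $|C_0|=|C_n|=4$ then for each coloring $\psi_1$ of $C_0$ the relation contains, for a suitable vertex $v\in V(C_n)$, every $v$-diagonal coloring of $C_n$ (equivalently, both winding numbers $\pm 2$ of that diagonal type occur). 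This matches exactly the $T'_4$-domination behavior isolated in Lemma~\ref{lemma-col-tw}, which is why three non-quadrangulated pieces suffice to reproduce the coloring behavior that Lemma~\ref{lemma-domin} previously extracted from $131$ of them.

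The main obstacle is the bookkeeping that makes the finite computation faithful to the infinite family of chains: one must verify that the coloring-type invariant is complete, i.e.\ that two ring colorings of the same type are genuinely interchangeable for extension through any basic link, which relies on Lemmas~\ref{lemma-cylflow}--\ref{lemma-col-33} and on the fact that quadrangulated pieces only transmit winding number. One must also certify that the caps placed on the length and on the non-quadrangulation count are large enough that no longer or triangle-richer chain can violate permissiveness; equivalently, that the generated relation monoid has truly stabilized, so that the finite search over capped products covers all chains. Handling the boundary cases in which the rings share a vertex, and in which almost quadrangulations are promoted to quadrangulations, requires additional care to ensure the type computation remains valid.
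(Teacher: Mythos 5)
Your overall strategy—reduce to chains of basic links via Lemma~\ref{lemma-basic} and then verify the coloring claim by a finite computer search over the finite list of links—is the same in spirit as the paper's, which enumerates all chains of length exactly $n=7$ built from the graphs of Figure~\ref{fig-basic}, the graph $\text{I}_4$, and the indecomposable critical graphs from~\cite{dvolid}, checks the claim by computer for $n=7$, and then handles longer chains by a short induction (collapse any quadrangulation $\text{Q}_1,\ldots,\text{Q}_5$, or else peel off the first link using Theorem~\ref{thm-onetri}). Your transfer-relation formulation is a legitimate alternative to that fixed-length check plus induction. However, as written it has genuine gaps.

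First, the coloring-type invariant you propose is not complete, and this breaks the composition. For a single basic link $H$ whose rings are joined by an edge or share a vertex (e.g.\ $\text{S}_1$, $\text{S}_2$, $\text{J}_1$, or the links in which consecutive cutting cycles intersect), whether $(\psi_1,\psi_2)$ extends depends on the actual colors at the connecting vertices, not merely on the diagonal/bichromatic class and winding number of each ring coloring separately; two pairs with identical types can behave differently. The relation $R_H$ must therefore be computed on actual ring colorings (or on pairs modulo a \emph{simultaneous} permutation of the three colors), which is still finite but is not the quotient you describe. Relatedly, every proper $3$-coloring of a $4$-cycle has winding number $0$, so the ``winding number $\pm2$ in the diagonal cases'' you want to record does not exist; Lemmas~\ref{lemma-cylflow}--\ref{lemma-col-33} justify the type reduction only across quadrangulated segments of prescribed minimum length, which a chain of only $7$ links need not contain, so they cannot be the source of the bound $n\ge 7$ (the paper notes the statement actually fails for $n\le 6$ or with only two non-quadrangulated links, so the bound is an output of the enumeration, not of those lemmas). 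Second, your generating set is not finite as stated: basic graphs with no contractible $4$-cycle include the entire infinite family of reduced Thomas--Walls graphs $T'_n$. You need the additional reduction—made explicitly in the paper—that any link whose rings are separated by an internal triangle or disjoint non-contractible $4$-cycle decomposes into shorter links, so that only the finitely many indecomposable graphs from~\cite{dvolid} (those whose rings are disjoint and not separated by another triangle or disjoint $4$-cycle) need be taken as generators. With these two repairs, and with the stabilization of the relation monoid certified by an honest breadth-first search over states of the form (relation, capped length, capped non-quadrangulation count), your computation would establish the lemma.
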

\begin{proof}
By Lemma~\ref{lemma-basic}, we can assume that all the graphs in the chain are basic.  Furthermore, without loss of generality,
they are critical.  We consider the chains consisting of copies of the graphs depicted in Figure~\ref{fig-basic}, the graph I$_4$, and the
graphs from \cite{dvolid} whose rings are disjoint and not separated by another triangle or disjoint $4$-cycle.  By computer enumeration, we verify that the claim holds for $n=7$.

For longer chains, we prove the claim by induction on $n$.  By symmetry, we can assume that if $|C_1|=3$, then $|C_2|=3$.
If any of the graphs in the chain is isomorphic to a quadrangulation Q$_1$, \ldots, or Q$_5$,
then identify the vertices of their rings as indicated by dotted lines in Figure~\ref{fig-basic}, and apply induction to the resulting graph.
Otherwise, let $K$ be the first cutting cycle of the chain distinct from $C_1$.  We extend the $3$-coloring of $C_1$ to the subgraph of $G$
drawn between $C_1$ and $K$ by Theorem~\ref{thm-onetri}, and then the claim follows by the induction hypothesis for the subgraph of $G$ drawn between $K$ and $C_2$.
\end{proof}

Let us mention that Lemma~\ref{lemma-chaincol} is not true if we demand just two non-quadrangulations or if the chain has length at most 6.

\section{Disk with two triangles}\label{sec-crtri}

In this section, we consider the case of the graphs embedded in the disk with a ring of length $4$ and with exactly two triangles,
and we aim to prove Theorem~\ref{thm-crtri}.

Borodin et al.~\cite{4c4t} described all non-$3$-colorable planar graphs with exactly $4$ triangles.  The following is a special case of their
main result relevant to us.
\begin{theorem}[Borodin et al.~\cite{4c4t}]\label{thm-4c4t}
If $G$ is a $4$-critical planar graph with exactly four triangles and there exists an edge $e\in E(G)$ intersecting two of them,
then $G-e$ is a patched Havel-Thomas-Walls graph with the interface pair $e$.
\end{theorem}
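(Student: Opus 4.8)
The plan is to convert the statement about the $4$-critical planar graph $G$ into a statement about a critical graph in the disk with a ring of length $4$ and exactly two triangles, and then to classify that graph. Write $e=u_1u_3$ and let $u_1u_3a$ and $u_1u_3b$ be the two triangles containing $e$, so that $a$ and $b$ are common neighbors of $u_1$ and $u_3$. I first dispose of the degenerate case $ab\in E(G)$, in which $\{u_1,u_3,a,b\}$ induces a $K_4$ and hence already carries four triangles; since $G$ is $4$-critical and contains a $4$-chromatic $K_4$, this forces $G=K_4$, and $G-e=K_4-e$ is recognized directly as the patched Havel-Thomas-Walls graph coming from $T'_1$. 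So I may assume $a\neq b$ and $ab\notin E(G)$, whence $C:=u_1au_3b$ is an induced $4$-cycle of $G-e$ having $e$ as a chord.

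Next I expose $C$ as a ring. Applying Lemma~\ref{lemma-fr} to the triangles on $e$ and using that $G$ has only four triangles, I argue that $u_1u_3a$ and $u_1u_3b$ are faces, so deleting $e$ merges them into a single empty $4$-face bounded by $C$. Viewing $G-e$ as a graph $H$ drawn in the disk whose ring is this face, we obtain a graph with ring $C$ of length $4$ and with exactly the two triangles of $G$ that do not contain $e$. The decisive coloring observation is that a precoloring of $C$ is consistent with the chord $e$ precisely when it is $u_1$-diagonal (the edge $e$ demands $\psi(u_1)\neq\psi(u_3)$); since $G$ is not $3$-colorable, this means that no $u_1$-diagonal precoloring of $C$ extends to $H$, i.e.\ every $3$-coloring of $H$ gives $u_1$ and $u_3$ the same color. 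Moreover $4$-criticality of $G$ forces $H$ to be critical with respect to $C$. In particular $H$ cannot be a tent: by Corollary~\ref{cor-col-pt} the precolorings failing to extend over a tent are exactly the bichromatic ones, a family disjoint from the $u_1$-diagonal colorings that obstruct $H$.

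It then remains to show that a critical disk graph $H$ with ring of length $4$, exactly two triangles, and this $u_1$-diagonal obstruction is a patched Havel-Thomas-Walls graph with interface pair $e$. Here I would run the reduction of Section~\ref{sec-bc}, treating $C$ together with the triangle structure at the far end as the two distinguished ends of a cylinder: contract $4$-faces by legal identifications (Lemmas~\ref{lemma-legalidentification} and~\ref{lemma-4face}) until the graph becomes a chain of basic graphs (Lemma~\ref{lemma-basic}), recording via Corollary~\ref{cor-patch} that reversing the contractions amounts to re-inserting patches. The reduced core contains no contractible $(\le\!4)$-cycle, so by Theorem~\ref{thm-dvolid} its triangle-free part is a reduced Thomas-Walls graph $T'_n$; the two triangles, forced by the constraint $\psi(u_1)=\psi(u_3)$, then assemble into the edge or Havel's quasiedge closing off that end. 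Reinstating the patches exhibits $H=G-e$ as the claimed patched Havel-Thomas-Walls graph.

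I expect this last classification to be the main obstacle. The delicate points are to organize the $4$-face contractions so that legal identifications remain available until the graph is genuinely basic while keeping control of the positions of the two triangles relative to $C$; to ensure that the reductions preserve the exact family of extending precolorings rather than mere domination, so that the $u_1$-diagonal obstruction is carried through faithfully; and to match that obstruction against the Thomas-Walls skeleton, recognizing the Havel-Thomas-Walls shape and separating the edge case from the quasiedge case. This is precisely the hard structural analysis of critical graphs with two triangles, and it is where essentially all of the work resides.
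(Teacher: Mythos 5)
This statement is not proved in the paper at all: it is imported from Borodin et al.~\cite{4c4t} as a special case of their classification of planar $4$-critical graphs with exactly four triangles, and the paper then \emph{derives} its own two-triangle disk classification (Corollary~\ref{cor-patched}, and from it Theorem~\ref{thm-crtri}) from this imported result. Your reduction in the first half is essentially sound: showing that the two triangular regions on either side of $e$ are faces (your appeal to Lemma~\ref{lemma-fr} is technically off, since its hypothesis excludes the case where the bounding cycle is itself a contractible triangle, but the direct argument works --- if the region inside $u_1u_3a$ were not a face, the subgraph drawn there is a proper subgraph of $G$, hence $3$-colorable, hence every $3$-coloring of the triangle extends to it by permuting colors, and gluing with a $3$-coloring of the rest contradicts $G$ being non-$3$-colorable), that $H=G-e$ is critical with respect to the ring $C=u_1au_3b$, and that no $u_1$-diagonal precoloring of $C$ extends to $H$.

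The genuine gap is the final step, which is where you yourself locate ``essentially all of the work.'' What you still need is exactly Corollary~\ref{cor-patched}: a critical disk graph with a ring of length $4$, exactly two triangles, and no $u_1$-diagonal coloring must be a patched Havel--Thomas--Walls graph. In the paper's logical order this is a \emph{consequence} of Theorem~\ref{thm-4c4t}, so invoking the paper's two-triangle results here would be circular, and the alternative route you sketch does not go through as described: the machinery of Section~\ref{sec-bc} (legal identifications, chains of basic graphs, Theorem~\ref{thm-dvolid}) is built for tame graphs in the cylinder, where all triangles are non-contractible and pairwise vertex-disjoint and a chain of cutting cycles is given. Your graph $H$ lives in the disk; its two triangles may share an edge, share a vertex, or be disjoint, and they are contractible, so no chain structure is available until one has already analyzed the configuration of the triangles --- which is precisely the content of the theorem (it is what separates the added-edge case from the quasiedge case from a tent). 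Moreover the reductions of Section~\ref{sec-bc} only preserve domination, not the exact set of extendable precolorings, so the $u_1$-diagonal obstruction is not automatically carried through. As it stands the proposal reduces the theorem to an unproved statement of comparable difficulty; completing it would amount to reproving the main result of~\cite{4c4t}.
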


As a corollary, we have the following special case of Theorem~\ref{thm-crtri}.

\begin{corollary}\label{cor-patched}
Let $G$ be a graph embedded in the disk with at most two triangles and with ring $C=v_1v_2v_3v_4$.
If $G$ is critical and has no $3$-coloring that is $v_1$-diagonal on $C$,
then $G$ is obtained from a patched Havel-Thomas-Walls graph by framing on its interface pair $v_1v_3$.
\end{corollary}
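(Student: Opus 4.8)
The plan is to reduce the statement to Theorem~\ref{thm-4c4t} by adding a gadget that turns the ``missing'' diagonal coloring into a fourth and fifth triangle, thereby producing a $4$-critical planar graph with exactly four triangles that has an edge meeting two of them. First I would close the hole in the disk and interpret $G$ as a plane graph with ring $C=v_1v_2v_3v_4$ bounding the outer face; the hypothesis is that no proper $3$-coloring of $G$ is $v_1$-diagonal on $C$, i.e.\ every $3$-coloring $\varphi$ of $G$ satisfies $\varphi(v_1)=\varphi(v_3)$. The natural move is to \emph{frame} $C$ on the pair $v_1v_3$: add a new $4$-cycle $v_1v_2'v_3v_4'$ outside $C$ (with $v_2',v_4'$ new) together with the connecting edges, so that the outer ring becomes this new $4$-cycle and the two triangles $v_1v_2'v_3$-region and $v_3v_4'v_1$-region sit between the two $4$-cycles. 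Framing is exactly the construction defined before Theorem~\ref{thm-many}, and by design it records the winding/diagonal behaviour on $C$ as the coloring of the new ring. The goal is to arrange that the framed object $G^{+}$, once we also add the interface edge $v_1v_3$, has precisely four triangles and is $4$-critical.

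Concretely, I would attach to $C$ a small fixed gadget $W$ (drawn in the disk that was the hole) that is $3$-colorable and whose unique colorings force $v_1,v_2,v_3,v_4$ to receive a $v_1$-\emph{bichromatic} pattern --- that is, $W$ is chosen so that a $3$-coloring of $W\cup C$ exists iff $\varphi(v_1)=\varphi(v_3)$ and $\varphi(v_2)=\varphi(v_4)$ are \emph{not} forced, turning the colour restriction on $C$ into two triangles of $W$. The effect is that $G\cup W$ has exactly four triangles (the two of $G$ plus the two of $W$), and the hypothesis ``no $v_1$-diagonal coloring'' translates, after adding the edge $v_1v_3$, into non-$3$-colorability: the edge $v_1v_3$ kills the colorings with $\varphi(v_1)=\varphi(v_3)$, while the hypothesis has already killed those with $\varphi(v_1)\neq\varphi(v_3)$. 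Thus $(G\cup W)+v_1v_3$ is a non-$3$-colorable plane graph with exactly four triangles, and $v_1v_3$ is an edge meeting two of them. After passing to a $4$-critical subgraph containing $v_1v_3$ (using criticality of $G$ to guarantee that the relevant part of $G$ survives), Theorem~\ref{thm-4c4t} applies and tells us that deleting $v_1v_3$ yields a patched Havel--Thomas--Walls graph with interface pair $v_1v_3$. Unwinding the gadget $W$ and the framing then identifies $G$ itself as obtained from a patched Havel--Thomas--Walls graph by framing on $v_1v_3$, which is the desired conclusion.

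The main obstacle I expect is the bookkeeping around \emph{criticality} and the interaction between $G$'s criticality (relative to its ring $C$) and the $4$-criticality required by Theorem~\ref{thm-4c4t}. When we pass to a $4$-critical subgraph of $(G\cup W)+v_1v_3$, we must ensure (i) that the edge $v_1v_3$ and both triangles of $W$ are retained, so that Theorem~\ref{thm-4c4t} is genuinely applicable, and (ii) that no vertex or edge of $G$ that is essential for $C$-criticality is discarded; otherwise the resulting patched Havel--Thomas--Walls graph would correspond to a proper subgraph of $G$ rather than to $G$ itself. Here I would lean on the definition of criticality: for every edge or vertex of $G$ off $C$ there is a precoloring of $C$ that extends to $G$ minus that element but not to $G$; combined with the fact that the only precolorings of $C$ relevant to $3$-colorability of $G^{+}$ are the bichromatic ones (the diagonal ones never extend), this should force the $4$-critical subgraph to contain all of $G$'s edges and vertices, pinning $G$ exactly.

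The remaining steps --- verifying that the gadget $W$ has the claimed colouring behaviour, that it contributes exactly two triangles and no contractible short cycles that would spoil the triangle count, and that the framing in the conclusion of Theorem~\ref{thm-4c4t} matches the framing in the definition of the target class --- are routine case checks once $W$ is fixed, and I would relegate them to a short verification. The logical heart is the equivalence ``no $v_1$-diagonal $3$-coloring of $G$'' $\Longleftrightarrow$ ``$(G\cup W)+v_1v_3$ is not $3$-colorable,'' which is what lets the four-triangle machinery of Borodin et al.\ take over.
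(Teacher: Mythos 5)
Your core idea is the paper's: force non-$3$-colorability by adding the edge $v_1v_3$, obtain a planar graph with exactly four triangles in which $v_1v_3$ meets two of them, and invoke Theorem~\ref{thm-4c4t}. But there are two genuine gaps. First, the gadget $W$ is both incoherent as described (it is supposed to ``force a bichromatic pattern'' and simultaneously to admit a coloring iff that pattern is \emph{not} forced) and unnecessary: the two extra triangles must come from the common neighbours of $v_1$ and $v_3$ themselves, not from an external widget, because every common neighbour of $v_1$ and $v_3$ becomes a triangle once $v_1v_3$ is added. This is exactly where your plan breaks: if $G$ really is obtained from a patched Havel--Thomas--Walls graph by a nontrivial framing on $v_1v_3$, then $v_1$ and $v_3$ already have \emph{four} common neighbours ($v_2,v_4$ from $C$ and the two degree-two vertices of the inner $4$-cycle), so $G+v_1v_3$ has six triangles and Theorem~\ref{thm-4c4t} does not apply. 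The paper's proof begins precisely by neutralizing this: using criticality and Theorem~\ref{thm-onetri} it shows every non-facial $4$-cycle separates the hole from both triangles, takes the innermost non-facial $4$-cycle $C'$ through $v_1,v_3$, shows the region between $C$ and $C'$ consists of two $4$-faces (so $G$ is $G'$ plus a framing), and only then adds $v_1v_3$ to $G'$, where the choice of $C'$ guarantees exactly four triangles. Your proposal adds a framing cycle \emph{outside} $C$, which goes in the wrong direction and does not remove this obstruction.

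Second, the step ``the $4$-critical subgraph $H$ equals all of $G''$'' is asserted but not proved, and the route you sketch (one witnessing precoloring per deleted element) does not obviously close: the witnessing precoloring for a given element could be bichromatic on $C$, in which case its extendability to $G-e$ says nothing about $3$-colorability of $(G-e)+v_1v_3$. The paper instead argues structurally: by Lemma~\ref{lemma-fr}, criticality of $G$ forces every non-facial $(\le\!5)$-cycle of $G$ to separate the hole from a triangle, while every face of the patched Havel--Thomas--Walls graph $H-v_1v_3$ has length at most $5$; hence no face of $H$ can properly contain further material of $G''$, and $H=G''$. Without some argument of this kind the conclusion only identifies a subgraph of $G$, not $G$ itself.
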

\begin{proof}
Since $G$ is critical, Theorem~\ref{thm-onetri} implies that any non-facial $4$-cycle in $G$ separates
the hole of the disk from both triangles of $G$.

Let $C'$ be a non-facial $4$-cycle in $G$ containing $v_1$ and $v_3$, such that the subgraph of $G$ drawn between $C'$
and $C$ is maximal. Let $G'=G-(V(C)\setminus V(C'))$.
Since the $4$-cycles in $C\cup C'$ distinct from $C$ and $C'$ do not separate the hole from the triangles,
they bound faces, and thus $G$ is obtained from $G'$ by framing on $v_1v_3$.

Let $G''=G'+v_1v_3$.  Note that $G''$ is not $3$-colorable, and by the choice of $C'$, it contains exactly
four triangles.  By Theorem~\ref{thm-3t}, $G''$ contains a $4$-critical subgraph $H$ with all four triangles.
By Theorem~\ref{thm-4c4t}, $H-v_1v_3$ is a patched Havel-Thomas-Walls graph with the interface pair $v_1v_3$.
Since $G$ is critical, Lemma~\ref{lemma-fr} implies that every non-facial $(\le\!5)$-cycle of $G$ separates
the hole of the disk from at least one triangle of $G$.  However, all faces $H-v_1v_3$ have length
at most $5$, and thus $G''=H$.

It follows that $G$ is obtained from the patched Havel-Thomas-Walls graph $G'$ by framing on its interface pair $v_1v_3$.
\end{proof}

Finally, we need a result on the density of $4$-critical graphs by Kostochka and Yancey~\cite{koyan}.

\begin{theorem}\label{thm-crdens}
For every $n\ge 4$, every $4$-critical graph with $n$ vertices has at least $\frac{5n-2}{3}$ edges.
\end{theorem}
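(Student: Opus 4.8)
The plan is to use the \emph{potential method} of Kostochka and Yancey. For a graph $H$, define the potential $\rho(H) = 5|V(H)| - 3|E(H)|$; then the desired inequality $|E(G)| \ge \frac{5n-2}{3}$ is exactly the statement $\rho(G) \le 2$. Since $\rho(K_4) = 5\cdot 4 - 3\cdot 6 = 2$, the bound is tight and $K_4$ serves as the base case. I would argue by contradiction: let $G$ be a $4$-critical graph with the fewest vertices among those satisfying $\rho(G) \ge 3$, so that every $4$-critical graph on fewer vertices obeys the bound. Because $G$ is $4$-critical we have $\delta(G) \ge 3$, and, crucially, $G$ contains no $4$-chromatic \emph{proper} subgraph (otherwise deleting an edge outside it could not lower the chromatic number), so every proper induced subgraph of $G$ is $3$-colourable. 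A direct check disposes of the cases with $n$ small, so we may assume $n \ge 5$.

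The engine of the argument is a lower bound on the potential of proper subgraphs, so I would first prove a \emph{potential lemma}: there is a fixed constant $t > 2$ such that every set $R$ with $2 \le |R| \le n-1$ satisfies $\rho(G[R]) \ge t$. The point is that a subgraph of small potential is ``cheap to colour'', and criticality lets one splice colourings together. Concretely, if some $R$ violated the bound, one would take a $3$-colouring of the graph obtained from $G$ by contracting $R$ (or by deleting its interior), which exists by the minimality of $G$, and combine it with a $3$-colouring of $G[R]$ forced by its small potential; the result would be a proper $3$-colouring of $G$, contradicting $4$-criticality. The same splicing principle shows, in the other direction, that any graph needing a fourth colour must carry enough potential to house a $4$-critical subgraph, to which the induction hypothesis then applies.

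With these bounds in hand, the final contradiction comes from a global counting step combined with a vertex identification. Assigning each vertex an initial charge reflecting its contribution to $\rho$ and redistributing charge using the potential lemma and the structure of the low-degree vertices (which, as in all critical-graph arguments, form a restricted ``Gallai forest''), one shows that $\rho(G)$ cannot exceed $2$ unless $G$ contains a reducible configuration. To eliminate such a configuration, choose an appropriate pair of non-adjacent vertices and identify them to obtain a smaller graph $G'$. Then either $G'$ is $3$-colourable, which pulls back to a $3$-colouring of some $G-e$ that extends to $G$ and contradicts criticality, or $G'$ still requires a fourth colour and hence contains a smaller $4$-critical subgraph; applying the inductive potential bound to that subgraph and accounting for the identified vertices then forces $\rho(G) \le 2$, the contradiction we seek.

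The main obstacle is precisely this last identification step. One must choose \emph{which} vertices to merge so that the merge does not create an uncontrolled smaller non-$3$-colourable piece and so that the potential bookkeeping closes exactly. Calibrating the discharging finely enough to recover the sharp constant $\frac{5n-2}{3}$, rather than some weaker linear lower bound, is the delicate technical core: it is where the interaction between the potential lemma, the degree structure, and the identification must be balanced with no slack, and is exactly the part that requires the careful case analysis of Kostochka and Yancey.
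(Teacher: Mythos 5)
First, note that the paper does not prove Theorem~\ref{thm-crdens} at all: it is imported as a black-box result of Kostochka and Yancey~\cite{koyan}, so there is no in-paper argument to compare against. Your proposal is therefore an outline of the original proof from that reference. The skeleton you describe --- the potential $\rho(H)=5|V(H)|-3|E(H)|$, the reformulation of the bound as $\rho(G)\le 2$ with $K_4$ as the tight case, a potential lemma for proper vertex subsets, Gallai's structure of the degree-$3$ vertices, and a final discharging-plus-identification step --- is indeed the correct architecture of the Kostochka--Yancey argument.

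As a proof, however, the proposal has a genuine gap, and the one step you describe in any detail is described incorrectly. You claim the potential lemma follows by taking a $3$-colouring of the graph obtained from $G$ by contracting $R$ (or deleting its interior) and ``combining it with a $3$-colouring of $G[R]$ forced by its small potential''. This splicing fails as stated: the two colourings need not agree on the vertices where $R$ attaches to the rest of $G$, and the small potential of $G[R]$ does not ``force'' a compatible colouring --- $G[R]$ is $3$-colourable for the trivial reason that it is a proper subgraph of a $4$-critical graph. The actual argument runs in the opposite direction: given a $3$-colouring $\varphi$ of $G[R]$, one builds an auxiliary graph $Y(R,\varphi)$ by identifying each colour class of $R$ to a single vertex and joining the three resulting vertices pairwise; $Y$ is not $3$-colourable (a $3$-colouring of $Y$ would pull back to one of $G$), hence contains a smaller $4$-critical subgraph $W$, and applying the induction hypothesis to $W$ together with potential bookkeeping yields the lower bound on $\rho_G(R)$. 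Beyond this, the constant $t$ in your potential lemma is never pinned down, and the discharging and the choice of which non-adjacent vertices to identify --- which you yourself flag as ``the delicate technical core'' --- are left entirely unexecuted, so the sharp constant $\frac{5n-2}{3}$ is never actually derived. As it stands the proposal is a roadmap to the literature rather than a proof.
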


We can now prove the main theorem of this section.

\begin{proof}[Proof of Theorem~\ref{thm-crtri}]
We prove the claim by the induction on the number of vertices of $G$; hence, we assume that the claim holds
for all graphs with less than $|V(G)|$ vertices.

Let $C=v_1v_2v_3v_4$ be the ring of $G$.  Let $\psi_1$, $\psi_2$ and $\psi_3$ be a $v_1$-diagonal,
a $v_2$-diagonal, and a bichromatic $3$-coloring of $C$, respectively.
If either $\psi_1$ or $\psi_2$ does not extend to a $3$-coloring of $G$, then the claim follows from Corollary~\ref{cor-patched}.
Hence, we can assume that $\psi_1$ and $\psi_2$ extend to $3$-colorings $\varphi_1$ and $\varphi_2$ of $G$, respectively.
Since $G$ is critical, $\psi_3$ does not extend to a $3$-coloring of $G$.

Note that if say $v_1$ had degree $2$, then we could
recolor $v_1$ in the coloring $\varphi_1$ and obtain a $3$-coloring of $G$ whose restriction to $C$ is bichromatic,
which is a contradiction.  Similarly, we conclude that every vertex of $C$ has degree at least three.
Also, since $\varphi_1(v_2)=\varphi_1(v_4)$, the graph $G^\star_2$ obtained from $G$ by identifying $v_2$ with $v_4$
is $3$-colorable.  Symmetrically, the graph $G^\star_1$ obtained from $G$ by identifying $v_1$ with $v_3$ is $3$-colorable.

Suppose for a contradiction that $G$ has no non-ring $4$-faces.  Let $n$, $m$ and $s$ denote the number of vertices, edges and faces
of $G$, respectively.  Then $2m\ge 5(s-3)+4+2\cdot 3=5s-5$.  By Euler's formula, we have $s=m+2-n$, and thus
\begin{eqnarray*}
2m&\ge&5(m+2-n)-5=5m-5n+5\\
5n-5&\ge&3m
\end{eqnarray*}
and $m\le \frac{5n-5}{3}$.  Let $G'$ be the graph (not embedded in the disk) obtained from $G$ by identifying $v_1$ with $v_3$ to a vertex $z_1$
and $v_2$ with $v_4$ to a vertex $z_2$ and by suppressing parallel edges.  Note that $G'$ is not $3$-colorable, since $\psi_3$ does not extend to a $3$-coloring of $G$.
Let $G''$ be a $4$-critical subgraph of $G'$.  Since $G^\star_1$ and $G^\star_2$ are $3$-colorable, we have $z_1,z_2\in V(G'')$.
For every $v\in V(G')\setminus \{z_1,z_2\}$, note that $\psi_3$ extends to a $3$-coloring of $G-v$ by the criticality of $G$,
and thus $v\in V(G'')$.  Thus, $V(G'')=V(G')$.  Note that $|E(G')|=m-3$ and $|V(G')|=n-2$ since $C$ is replaced by the edge $z_1z_2$. Thus
$$|E(G'')|\le |E(G')|=m-3\le\frac{5n-14}{3}=\frac{5|V(G'')|-4}{3}.$$
This contradicts Theorem~\ref{thm-crdens}.

It follows that $G$ contains a $4$-face $x_1x_2x_3x_4$.  Since all vertices of $C$ have degree at least $3$, we can assume that
$x_1,x_2\not\in V(C)$.  If $x_3,x_4\in V(C)$, say $x_3=v_3$ and $x_4=v_4$, and $x_1$ is adjacent to $v_1$ and $x_2$ is adjacent to $v_2$,
then $G$ is a tent.  Hence, by symmetry, we can assume that either $x_3\not\in V(C)$, or $x_3=v_i$ for some $i\in \{3,4\}$ and
$x_1$ is not adjacent to $v_{i-2}$.
Let $G_0$ be the graph obtained from $G$ by identifying $x_1$ with $x_3$ to a new vertex $x$.  Note that $C$ is the ring of $G_0$
and $C$ is an induced cycle.
Observe that every $3$-coloring of $G_0$ corresponds to a $3$-coloring of $G$, obtained by giving $x_1$ and $x_3$ the color of $x$.
Consequently, $\psi_3$ does not extend to a $3$-coloring of $G_0$.

Consider any triangle $xyz$ in $G_0$ created by the identification; i.e., $K=x_1x_2x_3yz$ is a $5$-cycle in $G$.
Since $G$ is critical, $x_2$ has degree at least three, and thus $K$ does not bound a face.  By Lemma~\ref{lemma-fr},
$K$ separates a triangle of $G$ from the hole of the disk.  If $K$ separated both triangles, then Theorems~\ref{thm-onetri}
and \ref{thm-3t} would imply that $\psi_3$ extends to a $3$-coloring of $G_0$, which is a contradiction.
Consequently, $K$ separates exactly one of the triangles of $G$ from the hole.
Let $G_1$ be a maximal critical subgraph of $G_0$, and note that by Theorem~\ref{thm-onetri}, $G_1$ contains exactly two triangles.

By the induction hypothesis, $G_1$ is either a tent or
obtained from a patched Havel-Thomas-Walls graph by framing on its interface pair.

Let us first discuss the case that $G_1$ is a tent.  Then $G_1$ contains two vertex-disjoint triangles, each of them sharing an edge with $C$.
At least one of the triangles does not contain $x$, say a triangle $v_1v_2z_1$.  Hence, $v_1v_2z_1$ is a triangle in $G$ as well. By Theorem~\ref{thm-53} applied to the
disk bounded by the $5$-cycle $K=v_1z_1v_2v_3v_4$ and the $3$-coloring of $K$ that extends $\psi_3$,
we conclude that $G$ also contains a triangle containing the edge $v_3v_4$ and all other faces of $G$ have length $4$.
Therefore, $G$ is a tent.

Hence, it remains to consider the case that $G_1$ is obtained from a patched Havel-Thomas-Walls graph by framing on its interface pair, say $v_1v_3$.
Since $\psi_3$ does not extend to a $3$-coloring of $G_1$, Lemma~\ref{lemma-col-htw} implies that $C$ is strong in $G_1$,
and thus $G_1$ is a patched Havel-Thomas-Walls graph.

As the next case, suppose that $G_1$ is not obtained by patching from the graph depicted in Figure~\ref{fig-havel}(b).
Then, since $C$ is a strong ring $G_1$ and since $C$ is an induced cycle in $G_1$, it follows that $G_1$ contains vertices $w_1$, $w_2$, $y_1$ and $y_2$ and facial $5$-cycles $K_1=v_2v_1v_4w_1y_1$ and 
$K_2=v_2v_3v_4w_2y_2$, where possibly $w_1=w_2$.  Furthermore, if $w_1\neq w_2$, then $G_1$ also contains a $6$-cycle $y_1w_1v_4w_2y_2z$ with
quadrangulated interior.  If $w_1=w_2$, let us define $z=w_1$.  Let $K=zy_1v_2y_2$ and let $G_{1,K}$ be the subgraph of $G_1$ drawn in the closed disk bounded
by $K$; note that $G_{1,K}$ is obtained from a patched Havel-Thomas-Walls graph by framing on its interface pair $v_2z$. See Figure~\ref{fig-2triaglesproof}.

\begin{figure}
\begin{center}
\includegraphics[page=1]{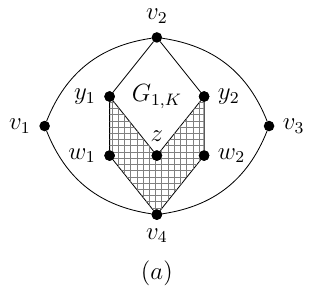}
\hskip 2em
\includegraphics[page=2]{fig-2triaglesproof}
\end{center}
\caption{Graph $G_1$ in proof of Theorem~\ref{thm-crtri}.}\label{fig-2triaglesproof}
\end{figure}

Both $v_1$ and $v_3$ have degree two in $G_1$.  Since $v_1$ and $v_3$ have degree at least three
in $G$ and every non-facial $(\le\!5)$-cycle in $G$ separates the hole from at least one of the triangles of $G$,
it follows that neither $K_1$ nor $K_2$ is a cycle in $G$.  In particular, $x$ is one of $v_4$, $v_2$ or $w_1$ (in the case that $w_1=w_2$).
Furthermore, $K$ corresponds to a $4$-cycle $K'$ in $G$, and the subgraph $G_{K'}$ of $G$ drawn in the closed disk bounded by $K'$ is isomorphic to $G_{1,K}$.
By Lemma~\ref{lemma-col-htw}, any $y_1$-diagonal $3$-coloring of $K'$ extends to a $3$-coloring of $G_{K'}$. Let us distinguish two subcases.
\begin{itemize}
\item If $x=v_2$, then $G$ contains cycles $K'_1=v_2v_1v_4w_1y_1x_1x_2$ and $K'_2=v_2v_3v_4w_2y_2x_1x_2$ (recall that $x_2\not\in V(C)$, and that $v_1,v_3$ have degree at least three in $C$
and non-facial $(\le\!5)$-cycles in $G$ separate the hole from at least one of the triangles,
and thus $x_2\not\in\{v_1,v_3,y_1,y_2\}$).
Let $G_{K'_1}$ and $G_{K'_2}$ denote the subgraphs of $G$ drawn in the closed disks bounded by $K'_1$ and $K'_2$, respectively.

Since $x_2$ has degree at least three in $G$ and every non-facial $(\le\!5)$-cycle in $G$ separates a triangle from the hole,
either $v_1y_1$ or $v_3y_2$ is not an edge.  By symmetry, we can assume the former.
Let $\varphi$ be a $3$-coloring defined by $\varphi(v_1)=\varphi(v_3)=\varphi(x_2)=\varphi(y_1)=1$, $\varphi(v_2)=\varphi(v_4)=\varphi(y_2)=2$ and
$\varphi(x_1)=\varphi(w_1)=\varphi(w_2)=\varphi(z)=3$.  Since $\varphi$ is $y_1$-diagonal on $K'$, it extends to a $3$-coloring of $G_{K'}$.
If $w_1\neq w_2$, then by Lemma~\ref{lemma-split}, $\varphi$ extends to the subgraph of $G$ drawn in the closed disk bounded by $v_4w_1y_1zy_2w_2$.

Suppose that $\varphi$ does not extend to a $3$-coloring of $G_{K'_1}$.  By Theorem~\ref{thm-7cyc}, $G_{K'_1}$ contains a $5$-face
whose intersection with $K'_1$ is a path containing $v_1$, $v_2$, $x_1$, and $y_1$.  However, this is not possible, since $y_1$ is not adjacent to $v_1$.
Hence, $\varphi$ extends to a $3$-coloring of $G_{K'_1}$.

Suppose that $\varphi$ does not extend to a $3$-coloring of $G_{K'_2}$.  By Theorem~\ref{thm-7cyc}, $G_{K'_2}$ contains a $5$-face
whose intersection with $K'_2$ is a path containing $w_2$, $y_2$, $v_2$, and $v_3$.  However, this is not possible, since $v_3$ has degree at least three in $G$.
Hence, $\varphi$ extends to a $3$-coloring of $G_{K'_2}$.

We conclude that $\varphi$ extends to a $3$-coloring of $G$.  This is a contradiction, since $\varphi$ is bichromatic on $C$.
\item If $x\neq v_2$, then $G$ contains cycles $K'_1=v_2v_1v_4abw_1y_1$ and $K'_2=v_2v_3v_4abw_2y_2$, with $\{a,b\}=\{x_1,x_2\}$.
Let $G_{K'_1}$ and $G_{K'_2}$ denote the subgraphs of $G$ drawn in the closed disks bounded by $K'_1$ and $K'_2$, respectively.

Let $\varphi$ be a $3$-coloring defined by
$\varphi(v_1)=\varphi(v_3)=\varphi(a)=\varphi(y_1)=1$,
$\varphi(v_2)=\varphi(v_4)=\varphi(w_1)=\varphi(w_2)=\varphi(z)=2$ and
$\varphi(y_2)=\varphi(b)=3$. Since $\varphi$ is $y_1$-diagonal on $K'$, it extends to a $3$-coloring of $G_{K'}$.
If $w_1\neq w_2$, then by Lemma~\ref{lemma-split}, $\varphi$ extends to the subgraph of $G$ drawn in the closed disk bounded by $bw_1y_1zy_2w_2$.

Suppose that $\varphi$ does not extend to a $3$-coloring of $G_{K'_1}$.  By Theorem~\ref{thm-7cyc}, $G_{K'_1}$ contains a $5$-face
whose intersection with $K'_1$ is a path containing $v_1$, $v_2$, $v_4$, and $y_1$.  However, this is not possible, since $v_1$ has degree at least three in $G$.

Suppose that $\varphi$ does not extend to a $3$-coloring of $G_{K'_2}$.  By Theorem~\ref{thm-7cyc}, $G_{K'_2}$ contains a $5$-face
whose intersection with $K'_2$ is a path containing $v_3$, $v_4$, $w_2$ and $y_2$.  However, this is not possible, since $v_3$ has degree at least three in $G$.

We conclude that $\varphi$ extends to a $3$-coloring of $G$.  This is a contradiction, since $\varphi$ is bichromatic on $C$.
\end{itemize}

Finally, let us consider the case that $G_1$ is obtained by patching from the graph depicted in Figure~\ref{fig-havel}(b).
Since $C$ is strong, $G_1$ contains $5$-faces $v_2v_1v_4w_1y_1$ and $v_2v_3v_4w_2y_2$.  Let $H$ denote the
subgraph of $G_1$ drawn in the closed disk bounded by the $6$-cycle $K=v_2y_1w_1v_4w_2y_2$, and observe that
a precoloring $\varphi$ of $K$ extends to a $3$-coloring of $H$, unless $\{\varphi(y_1),\varphi(w_1)\}=\{\varphi(y_2),\varphi(w_2)\}$.
Since both $v_1$ and $v_3$ have degree at least three in $G$, we conclude that $x\in \{v_2,v_4\}$, say $x=x_3=v_2$,
and $G$ contains a $6$-cycle $K'=x_1y_1w_1v_4w_2y_2$ such that the subgraph drawn in the closed disk bounded by $K'$ is isomorphic to $H$.

Because $x_2$ has degree at least three in $G$ and non-facial $(\le\!5)$-cycles in $G$ separate the hole from at least one of the triangles,
either $v_1y_1$ or $v_3y_2$ is not an edge; assume the latter.
Let $\varphi$ be a $3$-coloring defined by $\varphi(v_2)=\varphi(v_4)=\varphi(y_1)=1$, $\varphi(v_1)=\varphi(v_3)=\varphi(y_2)=\varphi(x_2)=2$
and $\varphi(x_1)=\varphi(w_1)=\varphi(w_2)=3$.  Note that $\varphi$ extends to a $3$-coloring of $H$.
We consider the subgraphs $G_{K'_1}$ and $G_{K'_2}$ of $G$ drawn inside the $7$-cycles $v_2x_2x_1y_1w_1v_4v_1$ and $v_2x_2x_1y_2w_2v_4v_3$, respectively.

Suppose that $\varphi$ does not extend to a $3$-coloring of $G_{K'_1}$.  By Theorem~\ref{thm-7cyc}, $G_{K'_1}$ contains a $5$-face 
whose intersection with $K'_1$ is a path containing $v_1$, $v_2$, $y_1$, and $w_1$.  However, this is not possible, since $v_1$ has degree at least three in $G$.

Suppose that $\varphi$ does not extend to a $3$-coloring of $G_{K'_2}$.  By Theorem~\ref{thm-7cyc}, $G_{K'_2}$ contains a $5$-face 
whose intersection with $K'_2$ is a path containing $v_2$, $v_3$, $x_1$ and $y_2$.  However, this is not possible, since $y_2$ is not adjacent to $v_3$.

Therefore, $\varphi$ extends to a $3$-coloring of $G$.
This is a contradiction, since the restriction of $\varphi$ to $C$ is bichromatic.
\end{proof}

Theorem~\ref{thm-crtri} enables us to give some information about critical graphs embedded in the cylinder with rings of length $4$.

\begin{corollary}\label{cor-xi}
Let $G$ be a critical tame graph embedded in the cylinder with rings $C_1=u_1u_2u_3u_4$ and $C_2=v_1v_2v_3v_4$.
Let $\psi$ be a $3$-coloring of $C_1$.
If no $3$-coloring of $G$ that extends $\psi$ is $v_1$-diagonal on $C_2$, then $G$ is obtained from a patched Thomas-Walls graph by
framing on its interface pairs, one of which is $v_1v_3$.
\end{corollary}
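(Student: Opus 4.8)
The plan is to collapse the cylinder to a disk and invoke the classification of critical disk graphs with two triangles, Theorem~\ref{thm-crtri}. Concretely, I would cap the hole bounded by $C_2$, so that $C_2$ becomes an internal $4$-face, and then draw the chord $v_1v_3$ inside that face; call the resulting disk graph $\hat G$, whose ring is $C_1$. A $3$-coloring of $\hat G$ is exactly a $3$-coloring of $G$ that is $v_1$-diagonal on $C_2$, so the hypothesis says precisely that $\psi$ does \emph{not} extend to a $3$-coloring of $\hat G$. The chord $v_1v_3$ splits the capped face into the two triangles $v_1v_2v_3$ and $v_1v_3v_4$, which share the edge $v_1v_3$; hence, provided $G$ itself is triangle-free, $\hat G$ is a disk graph whose only triangles are these two.

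Granting triangle-freeness of $G$, the main steps are the following. Since $G$ is triangle-free, Theorem~\ref{thm-onetri} shows that every precoloring of $C_1$ extends to $G$, and hence to the capped graph $\hat G-v_1v_3$; consequently any critical subgraph $H$ of $\hat G$ containing $C_1$ (one exists and is distinct from $C_1$, because $\psi$ fails to extend to $\hat G$) must contain the chord $v_1v_3$, as otherwise $H\subseteq\hat G-v_1v_3$ and every precoloring of $C_1$ would extend to $H$. Now $H$ is critical, its ring has length $4$, and it has at most two triangles, so Theorem~\ref{thm-crtri} applies: $H$ is a tent or is obtained from a patched Havel-Thomas-Walls graph by framing on its interface pair. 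The tent case is impossible, since the two triangles of a tent are vertex-disjoint whereas the only triangles of $\hat G$, namely $v_1v_2v_3$ and $v_1v_3v_4$, share the edge $v_1v_3$; and since every graph produced by Theorem~\ref{thm-crtri} has exactly two triangles, $H$ must contain both of them, realized by the closing edge $v_1v_3$. Thus $H$ is a framed patched Havel-Thomas-Walls graph whose closing edge is $v_1v_3$. Finally, because $G$ is triangle-free and critical, every contractible $(\le\!5)$-cycle of $G$ bounds a face by Lemma~\ref{lemma-fr}; comparing this with the face structure of $H$ lets me conclude $\hat G=H$. Deleting $v_1v_3$ and reopening the $C_2$-hole then exhibits $G$ as a patched reduced Thomas-Walls graph framed on its two interface pairs, one of which is the recovered pair $v_1v_3$; Lemmas~\ref{lemma-col-tw} and~\ref{lemma-col-htw} confirm that such a graph has exactly the coloring behaviour asserted by the hypothesis.

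The main obstacle is the step I deferred, namely showing that $G$ is triangle-free (note that the target graphs are all triangle-free, so this is genuinely forced by the hypothesis). Since $G$ is tame, any triangle $T$ of $G$ is non-contractible and separates $C_1$ from $C_2$, and here I would argue by the winding-number machinery of Lemmas~\ref{lemma-cylflow} and~\ref{lemma-col44}: a non-contractible triangle carries an odd winding number, while the $4$-cycle $C_2$ carries an even one. In the presence of such a $T$ one can first extend $\psi$ across the region between $C_1$ and $T$ as in Theorem~\ref{thm-onetri}, and then exploit this parity slack between the odd triangle and the even ring to steer the coloring of the region between $T$ and $C_2$ so that $C_2$ receives a $v_1$-diagonal coloring, contradicting the hypothesis. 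Turning this winding/parity incompatibility into a clean contradiction, and in particular handling several pairwise disjoint non-contractible triangles, is the delicate part of the argument; it is exactly what guarantees that the triangle count of $\hat G$ is precisely two, so that Theorem~\ref{thm-crtri} can be brought to bear.
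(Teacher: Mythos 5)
Your overall strategy---cap the cylinder, add the chord $v_1v_3$, and invoke Theorem~\ref{thm-crtri} on a critical subgraph---is the same as the paper's, but there are two genuine gaps. The first concerns where you draw the chord. Adding $v_1v_3$ inside the capped $C_2$-face creates a triangle $v_1wv_3$ for \emph{every} common neighbour $w$ of $v_1$ and $v_3$ in $G$, not only for $v_2$ and $v_4$. In exactly the situations the corollary is about---$G$ obtained by framing at the $C_2$ end, so that $G$ contains a second non-contractible $4$-cycle $v_1v'_2v_3v'_4$---your $\hat G$ acquires four triangles. Then it is false that ``$\hat G$ is a disk graph whose only triangles are $v_1v_2v_3$ and $v_1v_3v_4$'', Theorem~\ref{thm-crtri} does not apply to $\hat G$, and the final identification $\hat G=H$ fails (after capping, $v_2$ and $v_4$ have degree two, so any critical subgraph $H$ must discard them, and reconstructing $G$ from $H$ requires recovering precisely the framing you have lost track of). The paper avoids this by first choosing the non-contractible $4$-cycle $C'_2$ through $v_1,v_3$ \emph{farthest from $C_2$}, showing via criticality and Lemma~\ref{lemma-fr} that everything between $C'_2$ and $C_2$ is just the framing, and only then adding the chord to the part between $C_1$ and $C'_2$; the extremal choice of $C'_2$ is what guarantees the chord lies in exactly two triangles. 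Your argument needs this peeling step.

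The second gap is the step you defer: ruling out non-contractible triangles of $G$. The winding-number route you sketch does not close: every $3$-coloring of a $4$-cycle has winding number $0$ whether it is diagonal or bichromatic, so the parity mismatch with a triangle only shows that the region between $T$ and $C_2$ is not a quadrangulation; it gives you no handle for steering the coloring to be $v_1$-diagonal on $C_2$. The paper's argument is different and elementary: taking $T$ to be the non-contractible triangle nearest the chord, extend $\psi$ from $C_1$ to $T$ by Theorem~\ref{thm-onetri}; the disk bounded by $T$ (with the chord present) contains exactly three triangles, hence is $3$-colorable by Theorem~\ref{thm-3t}, and since a triangle has a unique $3$-coloring up to permutation of colours, the given precoloring of $T$ extends across that disk---contradicting the hypothesis. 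Note that this too needs the chord to lie in only two triangles inside $T$, so it again depends on the peeling step above.
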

\begin{proof}
Let $C_2'$ be a non-contractible $4$-cycle in $G$ containing $v_1$ and $v_3$ such that the subgraph $G_2$ of $G$ drawn
between $C_2'$ and $C_2$ is maximal.  Let $G_1$ be the subgraph of $G$ drawn between $C_1$ and $C'_2$.
Note that all faces of $C_2'\cup C_2$ have length $4$.  Since $G$ is critical, Lemma~\ref{lemma-fr} implies that
$G_2=C_2'\cup C_2$, and thus $G$ is obtained from $G_1$ by framing on $v_1v_3$.  

Let $G'_1=G_1+v_1v_3$.  Note that $\psi$ does not extend to a $3$-coloring of $G'_1$.   By the choice of $C'_2$,
the edge $v_1v_3$ belongs to exactly two triangles $v_1v'_2v_3$ and $v_1v'_4v_3$ in $G'_1$.  If $G'_1$ contains a triangle $T$ distinct from $v_1v'_2v_3$ and $v_1v'_4v_3$,
then $T$ separates the hole bounded by $C_1$ from $v_1v'_2v_3$ and $v_1v'_4v_3$,
and $\psi$ extends to a $3$-coloring of $G'_1$ by Theorems~\ref{thm-onetri} and \ref{thm-3t}.
This is a contradiction, and thus $G'_1$ contains exactly two triangles.  Since the two triangles of $G'_1$ share an edge,
the examination of the outcomes of Theorem~\ref{thm-crtri} shows that $G'_1$ contains a subgraph $H'$
that is obtained from a patched Thomas-Walls graph by framing on its interface pair in $C_1$, $v_1v_3$ is an interface pair of $H'$,
and the rings of $H'$ are $C_1$ and $C'_2$.

Let $H=H'\cup C_2$.  To prove Corollary~\ref{cor-xi}, it suffices to show that $G=H$.  This is the case, since $H\subseteq G$, every
face of $H$ has length at most $5$, and every contractible $(\le\!5)$-cycle in $G$ bounds a face by Lemma~\ref{lemma-fr}.
\end{proof}

We can now strengthen the conclusions of Lemma~\ref{lemma-domin}.

\begin{lemma}\label{lemma-equiv}
Let $G$ be a tame graph embedded in the cylinder with rings of length at most $4$.  If $G$ is a chain of graphs, at least $264$ of which are not quadrangulated,
then either every precoloring of the rings of $G$ extends to a $3$-coloring of $G$, or $G$ contains
a subgraph obtained from a patched Thomas-Walls graph by framing on its interface pairs, with the same rings as $G$.
\end{lemma}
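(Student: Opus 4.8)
The plan is to combine Lemma~\ref{lemma-domin} with Corollary~\ref{cor-xi} in a two-stage argument. Lemma~\ref{lemma-domin} already tells us that if $G$ is a chain of graphs with sufficiently many (at least $131$) non-quadrangulated pieces, then either every precoloring of the rings extends, or both rings have length $4$ and $G$ is dominated by the reduced Thomas-Walls graph $T'_4$. In the first case we are immediately done, so the entire work is in the second case: we must upgrade the statement ``$G$ is dominated by $T'_4$'' into the concrete structural conclusion that $G$ \emph{contains} a subgraph obtained from a patched Thomas-Walls graph by framing on its interface pairs, with the same rings $C_1$ and $C_2$ as $G$.

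First I would split the chain into two halves, each of which is itself a chain with at least $131$ non-quadrangulated pieces; this is exactly why the threshold has been doubled to $264$. Applying Lemma~\ref{lemma-domin} to each half separately, I conclude that either all precolorings of the rings of that half extend, or the half is dominated by $T'_4$ and in particular its rings have length $4$. The point of dominance by $T'_4$ (via Lemma~\ref{lemma-col-tw}) is that a precoloring of the two rings of such a half fails to extend precisely when it is \emph{dangerous} on both rings; since $C_1$ and $C_2$ both have length $4$ here, there exist precolorings of each ring forcing the neighbouring cutting cycle to carry a diagonal coloring. Concretely, I would produce a $3$-coloring $\psi_1$ of $C_1$ and a cutting cycle $K$ in the interior such that no extension of $\psi_1$ to the subgraph between $C_1$ and $K$ is $v_1$-diagonal on $K$ — this is the hypothesis needed to invoke Corollary~\ref{cor-xi}.

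With that hypothesis in hand, Corollary~\ref{cor-xi}, applied to the critical subgraph of $G$ drawn between $C_1$ and the appropriate non-contractible $4$-cycle, yields that this subgraph is obtained from a patched Thomas-Walls graph by framing on its interface pairs, one of which meets $C_1$. I would then repeat the argument from the other ring $C_2$, obtaining a matching patched-Thomas-Walls structure anchored at $C_2$, and glue the two structures together along the intermediate cutting cycles. Because a patched Thomas-Walls graph framed on both interface pairs has all its non-ring faces of length at most $5$, Lemma~\ref{lemma-fr} forces every contractible $(\le\!5)$-cycle in $G$ to bound a face, so the glued subgraph sits inside $G$ exactly as required and has the same rings $C_1$ and $C_2$.

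The main obstacle I anticipate is the gluing step: ensuring that the patched-Thomas-Walls structures extracted from the two ends are compatible and together span a single connected Thomas-Walls graph, rather than two disjoint fragments. This is where the separation condition of the chain (each cutting cycle separates the earlier ones from the later ones) must be used carefully, together with the fact that the interface pair $v_1v_3$ produced by Corollary~\ref{cor-xi} lies on a non-contractible $4$-cycle which itself serves as a cutting cycle. I expect that the domination by $T'_4$ of the \emph{middle} portion of the chain — guaranteed because even after removing both end-halves enough non-quadrangulated pieces remain — is what pins the two ends into a single reduced Thomas-Walls skeleton, forcing the desired global structure.
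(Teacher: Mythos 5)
Your overall skeleton matches the paper's: split the chain at a middle cutting cycle $K$ into two halves each rich enough in non-quadrangulated pieces, extract from each half a framed patched Thomas-Walls subgraph via Corollary~\ref{cor-xi}, and glue. But there is a genuine gap at the step where you claim to ``produce a $3$-coloring $\psi_1$ of $C_1$ and a cutting cycle $K$ such that no extension of $\psi_1$ is $v_1$-diagonal on $K$'' from the fact that the half is dominated by $T'_4$. Domination only yields \emph{sufficient} conditions for extension (every coloring extending to $T'_4$ extends to the half); it cannot by itself certify that any coloring fails to extend, so it cannot supply the hypothesis of Corollary~\ref{cor-xi}. The non-extendability has to come from the assumed global failure and from the interaction between the two halves. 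Concretely, the paper's pivotal step, which your proposal omits, is: by Corollary~\ref{cor-chaincol} each half $G_i$ comes with a vertex $v_i\in V(K)$ such that all $v_i$-diagonal colorings of $K$ extend into $G_i$ (together with $\psi\restriction C_i$); if $v_1$ and $v_2$ are equal or non-adjacent one chooses a coloring of $K$ that is simultaneously $v_1$- and $v_2$-diagonal and extends $\psi$ to all of $G$, a contradiction; hence $v_1v_2\in E(K)$, and then every $v_2$-diagonal extension of $\psi$ to $K$ extends into $G_2$ and therefore must \emph{fail} to extend into $G_1$ --- and this is exactly the hypothesis of Corollary~\ref{cor-xi} for $G_1$ (and symmetrically for $G_2$). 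Without this adjacency dichotomy your argument has no source of a non-extendable coloring on a half.

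The gluing step is also left essentially open. Your appeal to a $T'_4$-dominated middle portion does not obviously pin the two fragments into one framed patched Thomas-Walls graph, and your use of Lemma~\ref{lemma-fr} is not available since $G$ is not assumed critical (nor is it needed, since the conclusion only asks for a subgraph). The paper's gluing is concrete: Corollary~\ref{cor-xi} already forces the interface pairs of $H_1$ and $H_2$ at $K$ to be the two opposite diagonals $v_2v_4$ and $v_1v_3$; one then rules out $K$ being weak in both $H_1$ and $H_2$ (otherwise a bichromatic coloring of $K$ extends to both halves by Lemma~\ref{lemma-col-tw}, contradicting non-extendability via Theorem~\ref{thm-6cyc}, since all faces of $H_1\cup H_2$ have length at most $5$), and finally deletes the degree-two vertices of $H_1\cup H_2$ not on $C_1\cup C_2$ to exhibit the required framed patched Thomas-Walls subgraph. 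You correctly identified gluing as the main obstacle, but the mechanism you propose for resolving it is not the one that works.
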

\begin{proof}
Let $C_1$ and $C_2$ be the rings of $G$.  There exists a non-contractible $(\le\!4)$-cycle $K$ such that for $i\in\{1,2\}$, if $G_i$ denotes the subgraph
of $G$ drawn between $C_i$ and $K$, then $G_i$ is a chain of graphs, at least $132$ of which are not quadrangulated.

Suppose that there exists a $3$-coloring $\psi$ of $C_1\cup C_2$ that does not extend to a $3$-coloring of $G$.
By Corollary~\ref{cor-chaincol}, for $i\in\{1,2\}$ there exists a vertex $v_i\in V(K)$ such that
for any $v_i$-diagonal $3$-coloring $\psi'$ of $K$, the coloring $(\psi\restriction V(C_i))\cup \psi'$ extends to a $3$-coloring of $G_i$.
If $v_1$ is either equal or non-adjacent to $v_2$, then
we can choose a $3$-coloring $\psi'$ of $K$ that is both $v_1$-diagonal and $v_2$-diagonal,
and extend $\psi\cup \psi'$ to both to $G_1$ and $G_2$, which is a contradiction.

Therefore, assume that $v_1$ and $v_2$ are adjacent, $K=v_1v_2v_3v_4$.
Consider any $3$-coloring $\psi'$ of $C_1\cup C_2\cup K$ that extends $\psi$, such that $\psi'$ is $v_2$-diagonal on $K$.
It follows that $\psi'$ extends to a $3$-coloring of $G_2$, and thus it does not extend
to a $3$-coloring of $G_1$.  By Corollary~\ref{cor-xi},  $G_1$ contains a subgraph $H_1$ obtained from a patched Thomas-Walls graph by
framing on its interface pairs, $v_2v_4$ is an interface pair of $H_1$, and the rings of $H_1$ are $C_1$ and $K$.
By symmetry, $G_2$ contains a subgraph $H_2$ obtained from a patched Thomas-Walls graph by
framing on its interface pairs, $v_1v_3$ is an interface pair of $H_2$, and the rings of $H_2$ are $C_2$ and $K$.

Note that all faces of $H_1\cup H_2$ have length at most $5$, and since $\psi$ does not extend to a $3$-coloring of $G$,
Lemma~\ref{lemma-split} implies that $\psi$ does not extend to a $3$-coloring of $H_1\cup H_2$.
If $K$ is weak in both $H_1$ and $H_2$, then we can extend $\psi$ to a $3$-coloring $\psi'$ of $C_1\cup C_2\cup K$ that
is bichromatic on $K$, and further extend $\psi'$ to a $3$-coloring of $H_1$ and $H_2$ by Lemma~\ref{lemma-col-tw},
which is a contradiction.  Hence, $K$ is weak in at most one of $H_1$ and $H_2$.
Let $H$ be the subgraph of $H_1\cup H_2$ obtained by removing all vertices of degree two not belonging to $C_1\cup C_2$.
Observe that $H$ is obtained from a patched Thomas-Walls graph by framing on its interface pairs, as required by the conclusion
of the lemma.
\end{proof}

Let us remark that by using Lemma~\ref{lemma-chaincol} instead of Corollary~\ref{cor-chaincol}, the assumption of Lemma~\ref{lemma-equiv} could
be relaxed to ``If $G$ is a chain of at least $14$ graphs, at least $10$ of which are not quadrangulated''.

\section{Colorings of quadrangulations}\label{sec-qua}

Next, we explore the graphs containing a long chain of quadrangulations, which complements Lemma~\ref{lemma-equiv}.
We need the following fact, which follows from Lemmas 4 and 5 of~\cite{col8cyc}.

\begin{lemma}\label{lemma-ext453}
Let $G$ be a graph embedded in the cylinder with a ring $C=v_1v_2v_3v_4$ and a ring $T$ of length $3$, such that $T$ is the only triangle in $G$,
$G$ has exactly one face $f$ of length $5$, and all non-ring faces of $G$ other than $f$ have length
$4$.  Let $\psi$ be a $3$-coloring of $C$, and let $w\in\{-1,1\}$.  If $\psi$ does not extend to a $3$-coloring of $G$
with winding number $w$ on $T$, then either $T$ shares an edge with $C$, or there exists a path $v_ixyv_{i+2}$ in $G$ for some
$i\in\{1,2\}$ such that $f$ is drawn inside the contractible $5$-cycle of $C+v_ixyv_{i+2}$,
and $\psi(v_i)\neq\psi(v_{i+2})$.
\end{lemma}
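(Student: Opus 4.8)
The plan is to recast the statement as a winding-number condition concentrated at the unique $5$-face $f$, and then to read off the two alternatives from the characterisation of non-extendable precolorings in~\cite{col8cyc}.

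First I would record the winding-number bookkeeping. Every face of $G$ other than the rings $C$, $T$ and the single face $f$ has length $4$, and the winding number of a proper $3$-coloring around the boundary of a $4$-face is $0$; hence, exactly as in the proof of Lemma~\ref{lemma-cylflow}, sweeping a non-contractible cycle from $C$ across the quadrangulated part to $T$ changes its winding number only when it passes the face $f$, and the change equals the winding number of the coloring around $\partial f$ (which is $\pm 1$, since $\partial f$ has length $5$). As $|C|=4$, every precoloring $\psi$ has winding number $0$ on $C$, so for any extension $\varphi$ of $\psi$ the winding number of $\varphi$ on $T$ is exactly its winding number around $\partial f$. Therefore ``$\psi$ extends with winding number $w$ on $T$'' is equivalent to ``$\psi$ extends to a $3$-coloring whose winding around the $5$-cycle bounding $f$ equals $w$''. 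This isolates the only obstruction to a consistent extension at the face $f$ and at its position relative to $C$.

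Second, with the triangular ring $T$ reduced to this local condition at $f$, I would fill the hole bounded by $T$ (so that $T$ becomes a triangular face) and then cut the resulting disk open so as to expose a single precolored boundary cycle bounding a region that is quadrangulated apart from the controlled faces, matching the hypotheses of Lemmas~4 and~5 of~\cite{col8cyc}. Those lemmas describe precisely which precolorings fail to extend in such near-quadrangulated disks, and the shape of the resulting obstruction is a short contractible cycle through the ring that traps the exceptional face while forcing a pair of ring vertices to receive equal (or, dually, unequal) colors---the same mechanism seen in Theorem~\ref{thm-7cyc} and Theorem~\ref{thm-53}.

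Finally I would translate the output back to $G$. The degenerate possibility that $T$ shares an edge with $C$ is exactly the case in which the filling-and-cutting collapses the relevant cycle, and it is set aside as the first alternative of the conclusion. In every other case the obstruction delivered by~\cite{col8cyc} becomes a path $v_ixyv_{i+2}$ of length three between two opposite vertices of $C$, with $f$ drawn inside the contractible $5$-cycle $v_iv_{i+1}v_{i+2}yx$ and with $\psi(v_i)\neq\psi(v_{i+2})$; checking that $i\in\{1,2\}$ can always be arranged and that no $4$-face or second triangle intrudes uses Theorem~\ref{thm-onetri} together with the criticality arguments underlying Lemma~\ref{lemma-col44}. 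The step I expect to be the main obstacle is precisely this translation: aligning the orientation and sign of the prescribed winding number $w$ with the coloring condition around $f$ through the cut, guaranteeing that the exposed boundary cycle is short enough and has quadrangulated interior no matter whether $f$ is incident with $C$, incident with $T$, or interior, and matching the ``monochromatic diagonal'' obstruction of~\cite{col8cyc} exactly with the stated inequality $\psi(v_i)\neq\psi(v_{i+2})$.
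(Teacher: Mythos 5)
The paper offers no proof of this lemma beyond the assertion that it follows from Lemmas 4 and 5 of \cite{col8cyc}, and your proposal follows the same route: a correct winding-number reduction (the winding number of any $3$-coloring on the $4$-cycle $C$ is $0$, so the winding number of an extension on $T$ is determined, up to a fixed sign, by its winding number around the boundary of $f$) followed by an appeal to those two external lemmas. Since the substantive content is deferred to \cite{col8cyc} both in your sketch and in the paper itself, there is nothing further in the paper against which to check your translation step.
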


Let $G$ be a graph embedded in the cylinder with rings $C$ and $T$ such that $|T|=3$.  Let $\psi$ be a $3$-coloring of $C$.  We say that
\emph{$\psi$ forces the winding number of $T$} if there exists $w\in\{-1,1\}$ such that for every $3$-coloring $\varphi$ of $G$ that
extends $\psi$, the winding number of $\varphi$ on $T$ is $w$.

\begin{lemma}\label{lemma-orfor}
Let $G$ be a critical graph embedded in the cylinder with rings $C$ of length at most $4$ and $T$ of length $3$,
such that all triangles in $G$ are non-contractible.
If there exists a $3$-coloring $\psi$ of $C$ that forces the winding number of $T$, then
$G$ is a near $3,3$-quadrangulation and for some $w\in\{-1,1\}$, $\psi$ on $C$ causes winding number $w$.
\end{lemma}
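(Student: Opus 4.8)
The plan is to combine the rigidity of winding numbers in quadrangulations (Lemma~\ref{lemma-cylflow}) with the criticality of $G$, using the forcing hypothesis to eliminate every internal face that is not a $4$-face, apart from a single $5$-face forced by a parity obstruction when $|C|=4$, and then to read off the winding statement from Lemma~\ref{lemma-ext453}.

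First I would record the basic facts. Since every triangle is non-contractible, no internal face is a triangle, so by Lemma~\ref{lemma-fr} every internal face has length at least $4$, every contractible $(\le\!5)$-cycle bounds a face, and every non-ring vertex has degree at least three. Next, $\psi$ extends to a $3$-coloring of $G$: viewing $C$ as the ring to be precolored and letting the extension color $T$ freely, this is an application of Theorem~\ref{thm-onetri} (used in the same way in the proofs of Lemmas~\ref{lemma-col33} and~\ref{lemma-col44}); hence forcing is non-vacuous and there is a fixed $w\in\{-1,1\}$ equal to the winding number on $T$ of \emph{every} extension of $\psi$. I also record that the winding of any proper $3$-coloring is $0$ on a $4$-cycle, $\pm1$ on a $5$-cycle, and on a cycle of length $\ell$ has absolute value at most $\lfloor\ell/3\rfloor$ and parity $\ell$; moreover two homologous non-contractible cycles differ in winding by the sum of the winding contributions of the faces between them (the $4$-face case being Lemma~\ref{lemma-cylflow}). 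In particular, if $|C|=4$ then the winding of $\psi$ on $C$ is $0$ while $w=\pm1$, so $G$ cannot be a quadrangulation.

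The heart of the argument is to show that every internal face has length $4$ except for a single $5$-face incident with $C$ when $|C|=4$; the degenerate situations where $C$ and $T$ are not vertex-disjoint are small and checked directly via Lemma~\ref{lemma-fr}. I would rule out all other configurations using forcing together with parity. A face of even length contributes an even winding, so any configuration whose total face contribution cannot reach the odd target $w$ (for instance, when $|C|=4$ and all non-$4$-faces have even length) admits no extension at all, contradicting the extension produced above. On the other hand, $\psi$ can pin the winding contribution of a face only if that face is incident with $C$ and has length exactly $5$; every other $(\ge\!5)$-face is \emph{free}, and a free face $f_0$ permits a \emph{winding twist}: cutting the cylinder along a non-contractible cycle routed through $f_0$ opens $G$ into a disk, and recoloring the pieces with Theorems~\ref{thm-onetri},~\ref{thm-53} and~\ref{thm-7cyc} (with Theorem~\ref{thm-girth5} controlling the girth-$5$ parts) lets one re-glue with either winding on $T$, changing the total contribution by $\pm2$ and contradicting forcing. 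Making this twist rigorous while keeping the coloring proper and fixed on $C$ is the main obstacle. Together these eliminate two or more $(\ge\!5)$-faces, every $(\ge\!5)$-face not incident with $C$, and (when $|C|=3$) every $(\ge\!5)$-face, so $G$ is a near $3,3$-quadrangulation with rings $C$ and $T$.

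Finally I would read off the winding conclusion. If $|C|=3$, then $G$ is a $3,3$-quadrangulation and by Lemma~\ref{lemma-cylflow} the winding on $T$ of any extension equals the winding of $\psi$ on the triangle $C$; this common value is $w$, and since $C$ is a triangle, $\psi$ on $C$ causes winding number $w$ by definition. If $|C|=4$, let $f$ be the unique $5$-face and write $C=v_1v_2v_3v_4$ with $v_1v_2v_3$ a subpath of the boundary of $f$. If $C$ shares an edge with a triangle, the first case of the definition of causing applies; otherwise, since $\psi$ forces $w$ it does not extend to a $3$-coloring with winding $-w$ on $T$, so Lemma~\ref{lemma-ext453} yields $\psi(v_1)\ne\psi(v_3)$, and the second case applies. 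In either case the caused winding equals $w$ by the flow computation. Thus $G$ is a near $3,3$-quadrangulation and $\psi$ on $C$ causes winding number $w$, as required.
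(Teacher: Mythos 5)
There is a genuine gap at the heart of your argument. The entire content of Lemma~\ref{lemma-orfor} is the claim that if $G$ has two or more faces of length at least $5$, or a face of length at least $6$, then $\psi$ cannot force the winding number of $T$. You reduce this to the assertion that every such ``free'' face permits a ``winding twist'' --- cutting the cylinder through the face and re-gluing with either winding on $T$ --- but you do not prove this, and you yourself flag making it rigorous as ``the main obstacle.'' That obstacle is the theorem. There is no general recoloring move that realizes both winding numbers on $T$ from a single large face: whether both windings are achievable depends on the global structure of $G$, and establishing it requires exactly the kind of detailed analysis you have deferred. The auxiliary claim that $\psi$ can pin the contribution of a face only if the face is a $5$-face incident with $C$ is likewise asserted without proof.

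The paper takes a quite different route precisely to avoid having to prove such a twist directly. It proceeds by induction on $|V(G)|$: after disposing of the degenerate intersecting-rings cases and splitting at any non-ring triangle (using Theorem~\ref{thm-onetri} and Lemma~\ref{lemma-cylflow}), it either lands in the ``at most one non-$4$-face'' case --- which it finishes with Lemma~\ref{lemma-ext453} much as you do --- or it finds a $4$-face and identifies two opposite vertices of it, passes to a maximal critical subgraph, applies the induction hypothesis to conclude that subgraph is a near $3,3$-quadrangulation, and derives a contradiction with the assumption of extra large faces. The base case of this induction, where $G$ has no $4$-face at all, is handled by invoking the explicit classification of tame critical graphs in the cylinder with no contractible $4$-cycles from the companion paper (the graphs in Figure~\ref{fig-cr43}) and checking each one. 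Your proposal has no substitute for either the inductive contraction or this finite enumeration, so the case of a $(\ge\!6)$-face or of two $(\ge\!5)$-faces remains unproved. The opening observations and the final derivation of ``$\psi$ on $C$ causes winding number $w$'' are correct and match the paper, but they are the easy part.
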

\begin{proof}
We proceed by induction, assuming that the claim holds for all graphs with less than $|V(G)|$ vertices.
If $C$ and $T$ share at least two vertices, or if $|C|=3$ and $|V(C)\cap V(T)|=1$,
then the claim follows from Lemma~\ref{lemma-split}.  Hence, assume that
$C$ intersects $T$ in at most one vertex, and if $|C|=3$, then $C$ and $T$ are vertex-disjoint.

If $G$ contains a triangle $T'$ distinct from $C$ and $T$, then let $G_1$ be the subgraph of $G$ drawn between $C$ and $T'$, 
and let $G_2$ be the subgraph of $G$ drawn between $T'$ and $T$.  By Theorem~\ref{thm-onetri}, $\psi$ extends to a $3$-coloring $\varphi$ of $G_1$.
Note that $\varphi\restriction V(T')$ must force the winding number of $T$ in $G_2$.  By the induction hypothesis, $G_2$ is a $3,3$-quadrangulation.
By Lemma~\ref{lemma-cylflow}, the winding number of any $3$-coloring of $G_2$ on $T$ is equal to its winding number on $T'$, and we conclude that $\psi$ forces
the winding number of $T'$ in $G_1$.  The claims of Lemma~\ref{lemma-orfor} then follow by the induction hypothesis applied to $G_1$.
Therefore, we can assume that $G$ contains no triangle distinct from the rings, and in particular $G$ is tame.

Suppose that $G$ contains at most one non-ring face $f$ of length other than $4$, and if $G$ has such a face $f$,
that $|f|=5$.  Since $G$ has even number of odd faces, note that $f$ exists if and only if $|C|=4$.
If $|C|=3$, then it follows that $G$ is a $3,3$-quadrangulation, and $\psi$ automatically causes winding number.
If $|C|=4$, then $G$ satisfies the assumptions of
Lemma~\ref{lemma-ext453}.  Since $T$ does not share an edge with $C$, it follows that say $C=v_1v_2v_3v_4$ and $G$ contains
a path $v_1xyv_3$ such that $f$ is contained inside the contractible $5$-cycle
$v_3v_4v_1xy$, and $\psi(v_1)\neq \psi(v_3)$.  Since $G$ is critical, Lemma~\ref{lemma-fr} implies that $f=v_3v_4v_1xy$,
and thus $G$ is a near $3,3$-quadrangulation and $\psi$ on $C$ causes winding number $w$.

\begin{figure}
\begin{center}
\includegraphics[scale=0.7]{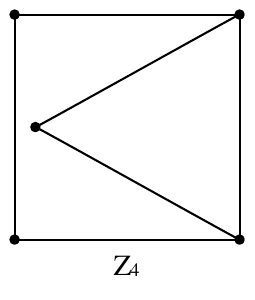}
\includegraphics[scale=0.7]{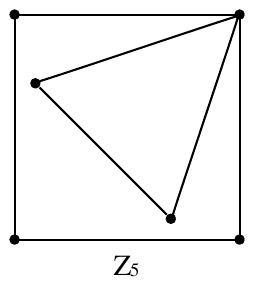}
\includegraphics[scale=0.7]{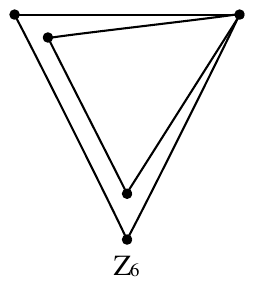}
\includegraphics[scale=0.7]{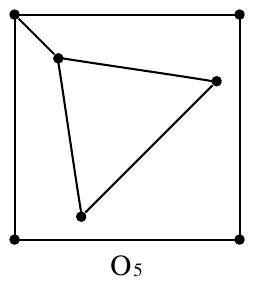}
\includegraphics[scale=0.7]{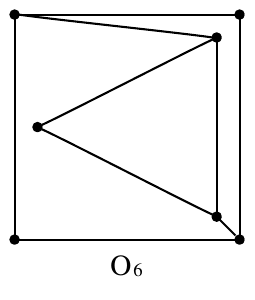}
\includegraphics[scale=0.7]{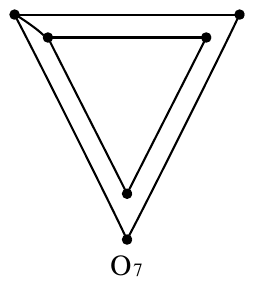}
\includegraphics[scale=0.7]{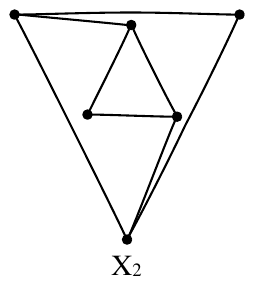}
\includegraphics[scale=0.7]{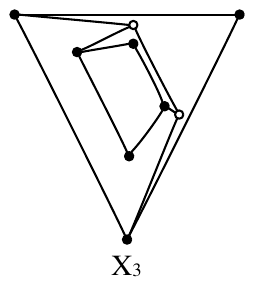}
\includegraphics[scale=0.7]{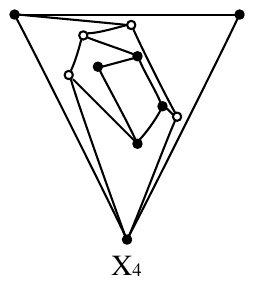}
\includegraphics[scale=0.7]{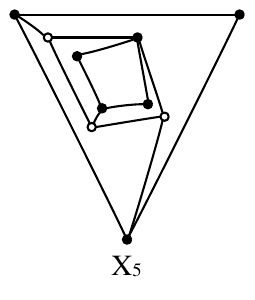}
\includegraphics[scale=0.7]{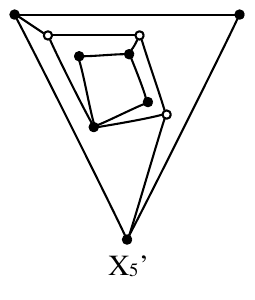}
\includegraphics[scale=0.7]{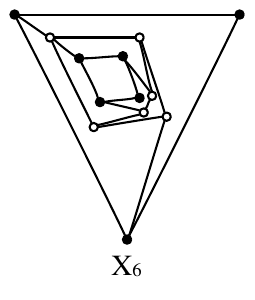}
\includegraphics[scale=0.7]{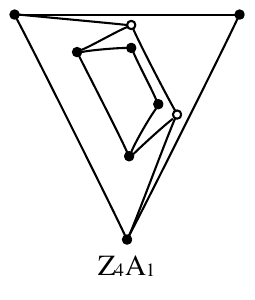}
\includegraphics[scale=0.7]{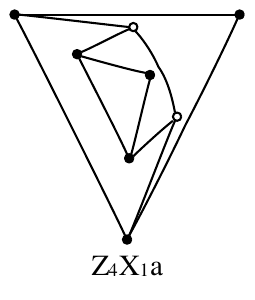}
\end{center}
\caption{Tame critical graphs with rings of length $3$ and at most $4$, no contractible $4$-cycles, and no non-ring triangles.}\label{fig-cr43}
\end{figure}

Hence, assume that
\claim{cl-moref}{$G$ contains either at least two faces of length at least $5$, or a face of length at least $6$.}
If $G$ contains no $4$-face, then $G$ is one of the critical graphs determined in~\cite{dvolid}.
We depict those without separating triangles in Figure~\ref{fig-cr43}.
A straightforward case analysis shows that $\psi$ does not force the the winding number of $T$ in any of these graphs.

Therefore, we can assume that $G$ contains a $4$-face $f=u_1u_2u_3u_4$.
Suppose first that three vertices of $f$, say $u_1$, $u_2$, and $u_3$, either all belong to $C$, or all belong to $T$.
Since $G$ is tame, this is only possible if $u_1,u_2,u_3\in V(C)$ and $|C|=4$.
Let $C=u_1u_2u_3v_4$.  Let $G'=G-u_2$, and let $\psi'$ be
the $3$-coloring of $C'=u_1u_4u_3v_4$ given by $\psi'(x)=\psi(x)$ for $x\in\{u_1,u_3,v_4\}$ and $\psi'(u_4)=\psi(v_4)$.
Note that $\psi'$ forces the winding number of $T$ in $G'$, and by the induction hypothesis, $G'$ is a near $3,3$-quadrangulation.
Since $f$ is the only face of $G$ that does not belong to $G'$, this contradicts (\ref{cl-moref}).
It follows that we can assume that $|V(f)\cap V(C)|\le 2$ and $|V(f)\cap V(T)|\le 2$ for every $4$-face $f$ of $G$.

In particular, we can by symmetry assume that $|\{u_1,u_3\}\cap V(C)|\le 1$ and $|\{u_1,u_3\}\cap V(T)|\le 1$; hence, $u_1$ and $u_3$ are not both contained
in the same triangle, and thus they are non-adjacent.
Let $G_1$ be obtained from $G$ by identifying $u_1$ with $u_3$, and let $G_2$ be a maximal critical subgraph of $G_1$.

Suppose for a contradiction that $G_2$ contains a contractible triangle, and thus $G$ contains a contractible $5$-cycle $K$ with $u_1u_2u_3\subset K$.
By Lemma~\ref{lemma-fr}, $K$ bounds a face in $G$, and thus $u_2$ has degree two.  Since $G$ is critical, we conclude
that $u_2$ is incident with $C$ or $T$.  However, then $u_2$ and its neighbors $u_1$ and $u_3$ all belong to $C$ or all belong to $T$,
which is a contradiction.

Hence, every triangle in $G_2$ is non-contractible.
Note that $\psi$ forces the winding number of $T$ in $G_2$, since every precoloring of $C\cup T$ that extends to a $3$-coloring of $G_2$ also
extends to a $3$-coloring of $G$.  By the induction hypothesis, $G_2$ is a near $3,3$-quadrangulation.
Each $(\le\!5)$-face $K$ in $G_2$ corresponds either to a $|K|$-face in $G$,
or to a contractible $(|K|+2)$-cycle $K'$ in $G$ containing either the path $u_1u_2u_3$ or the path $u_1u_4u_3$.  Since neither $u_2$ nor $u_4$ has degree $2$
in $G$, in the latter case $K'$ does not bound a face, and by Theorems~\ref{thm-6cyc} and~\ref{thm-7cyc}, all the faces contained in the disk bounded by $K'$
have length $4$ except for one of length $|K|$.  We conclude that all faces of $G$ distinct from $C$ and $T$ have length four,
except possibly for one of length $5$.  This contradicts (\ref{cl-moref}).
\end{proof}

As a corollary, we obtain the following.

\begin{lemma}\label{lemma-near33}
Let $G$ be a tame graph embedded in the cylinder with rings $C_1$ and $C_2$ of length at most $4$.
Suppose that $G$ contains non-contractible $(\le\!4)$-cycles $K_1$ and $K_2$ at distance at least $4$ from each other,
such that all faces of $G$ drawn between $K_1$ and $K_2$ have length $4$.  Then either every precoloring of $C_1\cup C_2$
extends to a $3$-coloring of $G$, or $G$ contains a near $3,3$-quadrangulation with rings $C_1$ and $C_2$
as a subgraph.
\end{lemma}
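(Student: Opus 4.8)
The plan is to cut the cylinder along $K_1$ and $K_2$ and track how $3$-colorings propagate across the quadrangulated band between them. Assume without loss of generality that the noncontractible cycles appear in the order $C_1,K_1,K_2,C_2$, and let $G_1$, $G_M$, $G_2$ be the subgraphs of $G$ drawn between $C_1$ and $K_1$, between $K_1$ and $K_2$, and between $K_2$ and $C_2$. The middle piece $G_M$ is a tame quadrangulation with rings $K_1,K_2$ at distance at least $4$, and since the two boundary cycles of a cylindrical quadrangulation have equal parity, $|K_1|=|K_2|\in\{3,4\}$. For a precoloring $\psi$ of $C_1\cup C_2$, let $S_1$ be the set of $3$-colorings of $K_2$ that extend $\psi\restriction C_1$ through $G_1\cup G_M$, and $S_2$ the set of $3$-colorings of $K_2$ that extend $\psi\restriction C_2$ in $G_2$; then $\psi$ extends to $G$ exactly when $S_1\cap S_2\neq\emptyset$. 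First I would note that by Theorem~\ref{thm-onetri} (applied with the single ring $C_1$, treating $K_1$ as an ordinary cycle) $\psi\restriction C_1$ always extends to $G_1$, so $S_1\neq\emptyset$, and symmetrically $S_2\neq\emptyset$. If $|K_1|=|K_2|=4$, then by Lemma~\ref{lemma-col44} every precoloring of $K_1\cup K_2$ extends across $G_M$, so $S_1$ is the set of \emph{all} $3$-colorings of $K_2$ and hence meets $S_2$; thus every precoloring of $C_1\cup C_2$ extends and the lemma holds in this case.

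It remains to treat $|K_1|=|K_2|=3$. Here Lemma~\ref{lemma-col44} says a precoloring of $K_1\cup K_2$ extends across $G_M$ precisely when the two winding numbers agree, and by Lemma~\ref{lemma-cylflow} any $3$-coloring of $G_M$ has equal winding numbers on $K_1$ and $K_2$; hence $S_1$ is exactly the set of all $3$-colorings of $K_2$ whose winding number is achievable on $K_1$ from $\psi\restriction C_1$ in $G_1$. Now suppose $\psi$ does not extend, so $S_1\cap S_2=\emptyset$. I would argue that $\psi\restriction C_1$ must \emph{force} the winding number of $K_1$ in $G_1$: otherwise both values $\pm1$ are achievable on $K_1$, $S_1$ is all six colorings of $K_2$, and $S_1\cap S_2\supseteq S_2\neq\emptyset$. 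Moreover, even if $\psi\restriction C_1$ forces winding number $w$ while $\psi\restriction C_2$ does not force one, then $S_1$ is the set of all three colorings of $K_2$ of winding number $w$ while $S_2$ contains one of each winding number, again forcing $S_1\cap S_2\neq\emptyset$. Hence \emph{both} $\psi\restriction C_1$ forces the winding number of $K_1$ in $G_1$ and $\psi\restriction C_2$ forces the winding number of $K_2$ in $G_2$ (and the two forced values are necessarily opposite, since otherwise $S_2\subseteq S_1$).

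To finish I would pass to critical subgraphs and invoke Lemma~\ref{lemma-orfor}. Let $\tilde G_1$ be a maximal critical subgraph of $G_1$ with rings $C_1$ and $K_1$. Because $\tilde G_1$ and $G_1$ admit exactly the same extendable precolorings of $C_1\cup K_1$, and the property ``$\psi\restriction C_1$ forces the winding number of $K_1$'' depends only on that set, the forcing persists in $\tilde G_1$. As $\tilde G_1$ is tame (so every triangle is noncontractible), has a ring of length at most $4$ and the ring $K_1$ of length $3$, Lemma~\ref{lemma-orfor} shows $\tilde G_1$ is a near $3,3$-quadrangulation with rings $C_1$ and $K_1$; symmetrically we obtain a near $3,3$-quadrangulation $\tilde G_2$ with rings $K_2$ and $C_2$. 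Gluing $\tilde G_1$, $G_M$, and $\tilde G_2$ along the triangles $K_1$ and $K_2$ produces a subgraph of $G$ that is quadrangulated except for at most one subdivided edge on each of $C_1$ and $C_2$, i.e.\ a near $3,3$-quadrangulation with rings $C_1$ and $C_2$, as required.

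The main obstacle I anticipate is the winding-number bookkeeping in the triangle case: one must use the full strength of Lemma~\ref{lemma-col44}—that \emph{every} same-winding pair of triangle precolorings connects across the band—to conclude that $S_1$ is an entire winding class of colorings of $K_2$ rather than some smaller set, since $G_1$ itself need not be a quadrangulation. The second delicate point is verifying that forcing of the winding number is genuinely inherited by the maximal critical subgraph, which is what licenses the application of Lemma~\ref{lemma-orfor}; this rests on the observation that forcing is determined solely by which precolorings of the two rings extend, a set preserved under passing to a maximal critical subgraph.
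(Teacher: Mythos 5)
Your proposal is correct and follows essentially the same route as the paper: reduce to the case $|K_1|=|K_2|=3$ via Lemma~\ref{lemma-col44}, deduce that $\psi$ forces (opposite) winding numbers on $K_1$ and $K_2$, apply Lemma~\ref{lemma-orfor} to (critical subgraphs of) the two end pieces, and glue the resulting near $3,3$-quadrangulations with the middle quadrangulation. Your explicit bookkeeping with the sets $S_1,S_2$ and the remark that winding-number forcing is inherited by the maximal critical subgraph merely spell out steps the paper leaves implicit.
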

\begin{proof}
Without loss of generality, $K_1$ separates $C_1$ from $K_2$.  For $i\in\{1,2\}$, let $G_i$ be the subgraph of $G$ drawn
between $C_i$ and $K_i$.  Let $G_0$ be the subgraph of $G$ drawn between $K_1$ and $K_2$.

Suppose that there exists a precoloring $\psi$ of $C_1\cup C_2$ that does not extend to a $3$-coloring of $G$.
By Theorem~\ref{thm-onetri}, $\psi$ extends to a $3$-coloring $\varphi$ of $G_1\cup G_2$.  Since $\psi$ does not extend
to a $3$-coloring of $G$, $\varphi\restriction V(K_1\cup K_2)$ does not extend to a $3$-coloring of $G_0$.
By Lemma~\ref{lemma-col44}, it follows that $|K_1|=|K_2|=3$ and $\varphi$ has opposite winding numbers on $K_1$ and $K_2$.
Furthermore, $\psi$ forces the winding number of $K_1$ and $K_2$, and thus by Lemma~\ref{lemma-orfor}, for $i\in\{1,2\}$,
$G_i$ contains a near $3,3$-quadrangulation $H_i$ with rings $C_i$ and $K_i$ as a subgraph.
Then, $H_1\cup G_0\cup H_2$ is a near $3,3$-quadrangulation with rings $C_1\cup C_2$.
\end{proof}

\section{Chains in cylinder}\label{sec-many}

We can now prove the main result of the paper.

\begin{proof}[Proof of Theorem~\ref{thm-many}]
Let $c_1=264$ be the constant of Lemma~\ref{lemma-equiv}.  Let $c=4c_1=1056$.  If at least $c_1$ of the graphs forming the chain $G$ are not quadrangulations,
then by Lemma~\ref{lemma-equiv}, either every precoloring of $C_1\cup C_2$ extends to a $3$-coloring of $G$, or
$G$ contains a subgraph $H$ obtained from a patched Thomas-Walls graph by framing on its interface pairs, and the
rings of $H$ are $C_1$ and $C_2$.

On the other hand, if all but at most $c_1-1$ graphs in the chain forming $G$ are quadrangulations, then
there exist four consecutive graphs in the chain that are quadrangulations, and thus by Lemma~\ref{lemma-near33},
either every precoloring of $C_1\cup C_2$ extends to a $3$-coloring of $G$, or
$G$ contains a near $3,3$-quadrangulation with rings $C_1$ and $C_2$ as a subgraph.
\end{proof}

Using the bound from Lemma~\ref{lemma-chaincol}, the constant $c$ of Theorem~\ref{thm-many} can be improved to $40$.  However, even this improved
bound is still likely to be far from the best possible.

\bibliographystyle{acm}
\bibliography{cylgen}

\end{document}